\newtheorem{theorem}{Theorem}[section]
\newtheorem{proposition}[theorem]{Proposition}
\newtheorem{lemma}[theorem]{Lemma}
\newtheorem{proof}{\textmd{\textit{Proof.}}}
\newtheorem{remark}[theorem]{Remark}
\newtheorem{example}[theorem]{Example}
\newtheorem{definition}[theorem]{Definition}
\newcommand{\qedd}{\hfill \Box}
\newcommand{\ve}{\varepsilon}
\newcommand{\lra}{\longrightarrow}
\newcommand{\e}{\mathrm{e}}
\newcommand{\N}{\ensuremath{\mathbb{N}}}
\newcommand{\R}{\ensuremath{\mathbb{R}}}
\newcommand{\bbD}{\ensuremath{\mathbb{D}}}
\newcommand{\cE}{\ensuremath{\mathcal{E}}}
\newcommand{\cL}{\ensuremath{\mathcal{L}}}
\newcommand{\cP}{\ensuremath{\mathcal{P}}}
\newcommand{\cU}{\ensuremath{\mathcal{U}}}
\newcommand{\cV}{\ensuremath{\mathcal{V}}}
\newcommand{\sH}{\ensuremath{\mathsf{H}}}
\newcommand{\fm}{\ensuremath{\mathfrak{m}}}
\def\div{\mathop{\mathrm{div}}\nolimits}
\def\supp{\mathop{\mathrm{supp}}\nolimits}
\def\loc{\mathop{\mathrm{loc}}\nolimits}
\def\Ent{\mathop{\mathrm{Ent}}\nolimits}
\def\Hess{\mathop{\mathrm{Hess}}\nolimits}
\def\Ric{\mathop{\mathrm{Ric}}\nolimits}
\def\CD{\mathop{\mathrm{CD}}\nolimits}
\def\RCD{\mathop{\mathrm{RCD}}\nolimits}
\def\HS{\mathop{\mathrm{HS}}\nolimits}
\def\ac{\mathop{\mathrm{ac}}\nolimits}
\def\Ch{\mathop{\mathsf{Ch}}\nolimits}
\title{Rigidity for the spectral gap on $\RCD(K,\infty)$-spaces}
\author{Nicola Gigli\thanks{SISSA, Trieste,
Italy ({\sf ngigli@sissa.it})} $\cdot$
Christian Ketterer\thanks{University of Toronto,
Canada ({\sf ckettere@math.toronto.edu})} $\cdot$
Kazumasa Kuwada\thanks{Mathematical Institute, Tohoku University,
Sendai 980-8578, Japan ({\sf kuwada@m.tohoku.ac.jp})} $\cdot$
Shin-ichi Ohta\thanks{Department of Mathematics, Osaka University,
Osaka 560-0043, Japan ({\sf s.ohta@math.sci.osaka-u.ac.jp})}}
\date{\today}
\begin{document}

\maketitle

\begin{abstract}
We consider a rigidity problem for the spectral gap of the Laplacian on an $\RCD(K,\infty)$-space
(a metric measure space satisfying the Riemannian curvature-dimension condition) for positive $K$.
For a weighted Riemannian manifold,
Cheng--Zhou showed that the sharp spectral gap is achieved
only when a $1$-dimensional Gaussian space is split off.
This can be regarded as an infinite-dimensional counterpart to Obata's rigidity theorem.
Generalizing to $\RCD(K,\infty)$-spaces is not straightforward
due to the lack of smooth structure and doubling condition.
We employ the lift of an eigenfunction to the Wasserstein space
and the theory of regular Lagrangian flows
recently developed by Ambrosio--Trevisan to overcome this difficulty.
\end{abstract}
\tableofcontents
\section{Introduction}\label{sc:intro}

The \emph{Riemannian curvature-dimension condition} $\RCD(K,N)$
is a synthetic notion of lower Ricci curvature bound for metric measure spaces
(roughly speaking, $K$ means a lower Ricci curvature bound
and $N$ acts as an upper dimension bound).
After its birth in \cite{AGS-rcd} for $N=\infty$, 
and further developments 
for $N<\infty$ from \cite{Gi-Ondiff,Gi-split} to \cite{AMS2,EKS}, 
the theory of metric measure spaces satisfying $\RCD(K,N)$
(called \emph{$\RCD(K,N)$-spaces} for short)
has been making a breathtaking progress.
There is already a long list of achievements,
including the Laplacian comparison theorem,
the splitting theorem of Cheeger--Gromoll type \cite{Gi-split}
and the isoperimetric inequality of L\'evy--Gromov type \cite{CM}.
Very recently, Cavalletti--Milman \cite{CMi} showed that
the \emph{$\RCD^*(K,N)$-condition}, which is defined as a variant of the $\RCD(K,N)$-condition,
is in fact equivalent to the $\RCD(K,N)$-condition.
The aim of the present article is to add a rigidity result
on the spectral gap of $\RCD(K,\infty)$-spaces with $K>0$ to this list,
as an application of the recently developed theories
on regular Lagrangian flows (\cite{AT}), the splitting theorem (\cite{Gi-split}),
and on the relation between the Hessian and the convexity of functions (\cite{Ke-obata}).

In an $\RCD(K,\infty)$-space $(X,d,\fm)$ with $K>0$,
we have the \emph{spectral gap} $\lambda_1 \ge K$
for the first nonzero eigenvalue of the Laplacian,
in other words, the (global) \emph{Poincar\'e inequality}
\begin{equation}\label{eq:Poin}
\int_X f^2 \,d\fm -\bigg( \int_X f \,d\fm \bigg)^2
 \le \frac{1}{K} \int_X |\nabla f|^2 \,d\fm
\end{equation}
holds for all $f \in W^{1,2}(X)$.
For $\RCD(K,N)$-spaces with $K>0$ and $N \in (1,\infty)$,
one can improve the above spectral gap to the \emph{Lichnerowicz inequality}
$\lambda_1 \ge KN/(N-1)$ \cite{EKS}[Theorem 4.22]. Moreover, for $CD(K,N)$-spaces the same estimate was obtained in \cite{LV-de}.
In \cite[Theorem~1.2]{Ke-obata}, \emph{Obata's rigidity theorem}
in Riemannian geometry (\cite{Ob}) was generalized to $\RCD(K,N)$-spaces as follows:
If an $\RCD(N-1,N)$-space $(X,d,\fm)$ with $N \in [2,\infty)$ satisfies the sharp gap $\lambda_1=N$,
then $(X,d,\fm)$ is represented as the spherical suspension of an $\RCD(N-2,N-1)$-space.
Note that assuming $K=N-1$ does not lose any generality
thanks to the scaling property of the $\RCD$-condition,
and see \cite{Ke-obata} for the cases of $N \in (1,2)$ and $N=1$.
We remark that $\lambda_1=N$ is achieved by a smooth weighted Riemannian manifold (without boundary)
satisfying $\Ric_N \ge N-1$ only when $N=\dim M$ and $M$ is isometric to the unit sphere
(see \cite[Theorem~1.1]{Ku}, where drifts of non-gradient type are also considered).

Our main theorem can be regarded as the infinite-dimensional counterpart
to the above generalized Obata theorem.
Briefly speaking, if the eigenvalue achieves its minimum $K$ with multiplicity $k$,
then $(X,d,\fm)$ splits off the $k$-dimensional Gaussian space.
We remark that, on an $\RCD(K,\infty)$-space with $K>0$,
the embedding of $W^{1,2}(X)$ into $L^2(X)$ is compact 
(\cite[Proposition~6.7]{GMS}), 
hence the Laplacian has the discrete spectrum (with finite multiplicities)
that we denote by $\sigma ( - \Delta ) = \{\lambda_i\}_{i = 0}^\infty$ 
with $\lambda_i \le \lambda_{i+1}$ and $\lambda_0 = 0$.

\begin{theorem}\label{th:main}
Let $(X,d,\fm)$ be an $\RCD(K,\infty)$-space with $K>0$,
and assume that $\lambda_i=K$ holds for $1 \le i \le k$.
Then there exists an $\RCD(K,\infty)$-space $(Y,d_Y,\fm_Y)$ such that$:$
\begin{enumerate}[{\rm (i)}]
\item
The metric space $(X,d)$ is isometric to the product space $(Y,d_Y) \times (\R^k,|\cdot|)$
with the $L^2$-product metric, where $|\cdot|$ is the Euclidean norm/distance.

\item
Through the isometry above,
the measure $\fm$ coincides with the product measure
$\fm_Y \times \e^{-K|x|^2/2} dx^1 \cdots dx^k$,
where $dx^1 \cdots dx^k$ denotes the Lebesgue measure on $\R^k$.
\end{enumerate}
\end{theorem}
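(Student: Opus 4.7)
The proof naturally splits into three stages: (a) extracting the pointwise rigidity $\Hess f \equiv 0$ from a single $K$-eigenfunction; (b) converting this into a metric-measure decomposition via a Lagrangian flow argument; (c) iterating for multiplicity $k$. For (a), let $f$ be a $K$-eigenfunction, $-\Delta f = Kf$, normalized so that $\int_X f^2\,d\fm = 1/K$. On an $\RCD(K,\infty)$-space, Gigli's Hessian calculus combined with Savar\'e's self-improvement of the Bakry--\'Emery condition yields
\[
\frac{1}{2}\Delta|\nabla f|^2 \;\ge\; \big|\Hess f\big|_{\HS}^2 + \langle\nabla f,\nabla \Delta f\rangle + K|\nabla f|^2
\]
in a suitable weak sense. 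Integrating against $\fm$, the left-hand side vanishes by mass conservation, while the cross term reduces to $-K\int|\nabla f|^2\,d\fm$ via $\Delta f = -Kf$. One is left with $\int_X |\Hess f|_{\HS}^2\,d\fm \le 0$, forcing $\Hess f \equiv 0$ $\fm$-a.e.\ and, after the chosen normalization, $|\nabla f| \equiv 1$.

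For (b), I would apply the Ambrosio--Trevisan theory \cite{AT} to the bounded Sobolev vector field $\nabla f$ (whose divergence $\Delta f = -Kf$ lies in $L^2$) to construct the regular Lagrangian flow $(\Phi_t)_{t\in\R}$. The rigidity $\Hess f = 0$ should translate into the statements that $\Phi_t$ acts as an isometry of $(X,d)$, that $f\circ\Phi_t = f+t$, and that the trajectories $t\mapsto\Phi_t(x)$ are unit-speed geodesic lines. Substituting the ansatz $\rho_t := \e^{Ktf - Kt^2/2}$ into the continuity equation $\partial_t\rho_t + \div(\rho_t\nabla f) = 0$ and using $|\nabla f| = 1$ together with $\Delta f = -Kf$ identifies $\rho_t$ as the density of $(\Phi_t)_*\fm$ relative to $\fm$. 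Combining this isometric $\R$-action with the Gaussian-weighted measure identity --- and invoking the splitting arguments of \cite{Gi-split} together with the Hessian/convexity correspondence of \cite{Ke-obata} --- the slice $Y := f^{-1}(0)$ endowed with the disintegrated measure $\fm_Y$ yields an isometry $(y,t)\mapsto \Phi_t(y)$ between $Y\times\R$ (equipped with the $L^2$-product metric) and $X$, sending $\fm_Y\otimes \e^{-Kt^2/2}\,dt$ to $\fm$. Tensorization then returns $(Y,d_Y,\fm_Y)\in \RCD(K,\infty)$.

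For (c), given an $L^2$-orthonormal basis $f_1,\ldots,f_k$ of the $K$-eigenspace, I would apply (a) to each linear combination $f_i+f_j$; polarizing the resulting Hessian identities shows that $\langle\nabla f_i,\nabla f_j\rangle$ is $\fm$-a.e.\ a constant, so a linear change of basis produces eigenfunctions with pairwise orthogonal unit gradients whose commuting flows $\Phi^1_t,\ldots,\Phi^k_t$ jointly split off $\R^k$ with the product Gaussian weight $\e^{-K|x|^2/2}\,dx^1\cdots dx^k$. The main obstacle is part (b): upgrading the $\fm$-a.e.\ identity $\Hess f = 0$ to the assertion that $(\Phi_t)$ acts by \emph{honest} isometries of $(X,d)$. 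In the smooth setting the parallel-vector-field interpretation of $\nabla f$ makes this immediate, but on a general $\RCD(K,\infty)$-space the lack of local doubling and of classical ODE theory is precisely why one must couple the regular-Lagrangian-flow framework of \cite{AT} with the splitting machinery of \cite{Gi-split} and the Hessian-rigidity methods of \cite{Ke-obata}, in order to pass from an almost-everywhere analytic identity to a global geometric product decomposition.
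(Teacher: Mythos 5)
Your outline correctly identifies the three stages and the right ingredients (Savar\'e's self-improved Bochner inequality to force $\Hess u = 0$, the Ambrosio--Trevisan flow of $\nabla u$, the Gaussian density, a splitting argument on $Y := u^{-1}(0)$, and iteration for multiplicity $k$), and it also correctly locates the crux of the difficulty: upgrading the $\fm$-a.e.\ identity $\Hess u = 0$ to the statement that the flow acts by honest isometries of $(X,d)$. But this is precisely the step you leave as a black box — ``invoke the splitting arguments of \cite{Gi-split} together with the Hessian/convexity correspondence of \cite{Ke-obata}'' — and neither of those sources applies directly, since \cite{Gi-split} and \cite{Ke-obata} both rely on finite dimension $N < \infty$ (and hence on local doubling and local compactness), which is exactly what fails here.

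What the paper actually does to fill this gap is to pass through the Wasserstein space. From $\Hess u = 0$ and a truncation of $u$, it proves (Theorem~\ref{th:kappa}) that a bounded test function with Hessian bounded below is semiconvex on $\cP^2_{\ac}(X)$; the key device is to replace $\fm$ by the tilted measure $\tilde\fm = \e^{-v}\fm$ and to show that $(X,d,\tilde\fm)$ is still $\RCD(K-\kappa,\infty)$ by carefully transforming the $\Gamma_2$-operator. Taking limits of the truncations then shows that the lift $\cU(\mu) := \int_X u\,d\mu$ is affine on $\cP^2_{\ac}(X)$ (Theorem~\ref{th:affine}). This affine property, combined with the pushforward density $\e^{-tu-t^2/2}$ under the regular Lagrangian flow, is what yields the $0$-evolution variational \emph{equality} for $\cU$ (Lemma~\ref{lem:w-EVI}), hence $W_2$-preservation (Lemma~\ref{lem:isom}), and only then (via Kantorovich--Rubinstein duality plus the Sobolev-to-Lipschitz property) that $F_t$ has an isometric representative on $X$ itself. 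Without this detour through $\cP^2(X)$ there is no known route from an a.e.\ Hessian identity to global metric rigidity in the $N=\infty$ setting; your proposal as written skips it.

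Two smaller points. First, you state $|\nabla f| \equiv 1$ as if it followed from $\Hess f = 0$ together with your normalization, but on a metric measure space $\Hess f = 0$ does not directly give $|\nabla f|$ constant; the paper proves constancy of $|\nabla u|^2$ separately (Proposition~\ref{pr:eigen}(ii)) using hypercontractivity of the heat semigroup and the Poincar\'e inequality (or, alternatively, Gigli's $H^{1,1}$-calculus). Second, for multiplicity $k$ you propose constructing $k$ commuting flows simultaneously via polarization; this would require proving commutativity and orthogonality of the flows, which is extra work. The paper instead iterates: after one split $X \cong Y\times\R$, one checks that $(Y,d_Y,\fm_Y)$ is again $\RCD(K,\infty)$ with $\lambda_1(Y) = K$ of multiplicity $k-1$, and repeats. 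Your approach is not wrong in principle, but it is strictly harder and you would still need the Wasserstein-lift machinery for each single split, so nothing is saved.
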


This rigidity was first shown on weighted Riemannian manifolds
by Cheng--Zhou \cite[Theorem~2]{CZ}.
(The first assertion (i) on the isometric splitting was also (informally) pointed out in \cite[p.~1547]{HN}
as an outcome of the improved Bochner inequality in \cite{BE}.)
See \cite{Mai} for a recent extension to the case of negative effective dimension ($N<0$).
It is worthwhile to review the proof in \cite{CZ}.
Let $u$ be an eigenfunction for the sharp spectral gap $K$.
Then the Bochner inequality under $\Ric_{\infty} \ge K$ becomes equality for $u$,
and shows $\Hess u \equiv 0$ (in other words, $u$ is affine).
Therefore $\nabla u$ is a parallel vector field and the de Rham decomposition
provides the isometric splitting as in (i).
The behavior of the measure in (ii) is also deduced from the Bochner inequality.
This argument reminds us the proof of Cheeger--Gromoll's splitting theorem \cite{CG},
the role of the Busemann function in \cite{CG} is replaced by the eigenfunction $u$.
The splitting theorem was generalized to $\RCD(0,N)$-spaces in \cite{Gi-split},
thus it is natural to consider an analogue of \cite{CZ} for $\RCD(K,\infty)$-spaces.

Although the assertion of Theorem~\ref{th:main} is the same as the Riemannian case,
the generalization to $\RCD(K,\infty)$-spaces is technically challenging.
The lack of the smooth structure
(precisely, parallel vectors fields and the de Rham decomposition)
prevents us following the simple proof of \cite{CZ}.
Moreover, compared with \cite{Gi-split,Ke-obata} on $\RCD(K,N)$-spaces,
the absence of upper dimension bound causes several difficulties
(for instance, our measure $\fm$ is not necessarily doubling
and $X$ is not locally compact).
In order to overcome these difficulties,
we consider the lift $\cU$ of the eigenfunction $u$ to the $L^2$-Wasserstein space,
defined by $\cU(\mu):=\int_X u \,d\mu$.
We deduce from $\Hess u \equiv 0$ (almost everywhere) that
{$\cU$} is affine 
by generalizing the discussion in \cite{Ke-obata} (Theorem~\ref{th:affine}).
Then we employ the \emph{regular Lagrangian flow} of the negative gradient vector field
$-\nabla u$ of the eigenfunction $u$ (Theorem~\ref{th:RLF}),
and show that its lift gives the gradient flow of $\cU$
in the sense of the evolution variational \emph{equality} 
(Lemma~\ref{lem:w-EVI}).
These precise behaviors of $\cU$ allow us to go down to $u$,
and we eventually see that $u$ itself is affine (Proposition~\ref{pr:u-affine}).

The article is organized as follows.
Section~\ref{sc:prel} is devoted to the preliminaries for $\RCD(K,\infty)$-spaces.
We divide the proof of Theorem~\ref{th:main} into 4 sections.
In Section~\ref{sc:affine} we show that the lift $\cU$ of the eigenfunction $u$
is affine along the lines of \cite{Ke-obata}.
We then apply the theory in \cite{AT} to obtain the regular Lagrangian flow $(F_t)_{t \in \R}$
of $-\nabla u$ in Section~\ref{sc:gf} (this step is not straightforward since $u$ is unbounded),
and analyze the behaviors of the measure $\fm$ and the distance $d$ along the flow.
With these properties of the flow,
we can follow the argument in \cite{Gi-split} to prove the $k=1$ case of Theorem~\ref{th:main},
as we shall see in Section~\ref{sc:isom}.
In Section~\ref{sc:final} we complete the proof by iteration,
followed by some concluding remarks.
\medskip

A part of this joint work was done while NG, CK and KK
visited Kyoto University in September 2016,
on the occasion of the RIMS International Research Project
``Differential Geometry and Geometric Analysis''.
The authors thank RIMS for its hospitality.
NG was supported in part by the MIUR SIR-grant `Nonsmooth Differential Geometry' (RBSI147UG4).
KK was supported in part by JSPS Grant-in-Aid for Young Scientist (KAKENHI) 26707004.
SO was supported in part by JSPS Grant-in-Aid for Scientific Research (KAKENHI) 15K04844.

\section{Preliminaries for $\RCD$-spaces}\label{sc:prel}

In this section we review the definition and some properties of $\RCD$-spaces.
We refer to \cite{Vi} for the foundation of optimal transport theory and $\CD$-spaces, and 
\cite{AGS-rcd,AGMR,AMS2,EKS,Gi-Ondiff,Gi-split}
for the reinforced notion of $\RCD$-spaces.

\subsection{$\CD(K,\infty)$-spaces}\label{ssc:CD}

Let $(X,d)$ be a complete and separable metric space,
and $\fm$ be a Borel measure on $X$ which is finite on bounded sets.
We in addition assume that $(X,d)$ is a \emph{geodesic} space
in the sense that every pair $x,y \in X$ is connected by a \emph{minimal geodesic}
$\gamma:[0,1] \lra X$ such that $\gamma(0)=x$, $\gamma(1)=y$ and
$d(\gamma(s),\gamma(t))=|s-t|d(x,y)$ (all geodesics in this paper will be minimal).

Denote by $\cP(X)$ the space of Borel probability measures on $X$,
and by $\cP^2(X) \subset \cP(X)$ the subset consisting of measures with finite second moment.
The $L^2$-\emph{Wasserstein distance} on $\cP^2(X)$ will be denoted by $W_2$.
We denote by $\cP^2_{\ac}(X) \subset \cP^2(X)$ the subset
consisting of absolutely continuous measures with respect to $\fm$ ($\mu \ll \fm$).
We recall a basic fact in optimal transport theory for later convenience.
For $\mu,\nu \in \cP^2(X)$, the \emph{Kantorovich duality}
\begin{equation}\label{eq:Kant}
\frac{W_2^2(\mu,\nu)}{2} =\sup_{(\varphi,\psi)}
 \bigg\{ \int_X \varphi \,d\mu -\int_X \psi \,d\nu \,\bigg|\,
 \varphi(x)-\psi(y) \le \frac{d^2(x,y)}{2} \bigg\}
\end{equation}
holds, and a pair $(\varphi,\psi)$ attaining the above infimum is called
a \emph{Kantorovich potential} for $(\mu,\nu)$.
Kantorovich potentials are given by locally Lipschitz functions 
under mild assumptions.

Now we turn to the definition of $\CD$-spaces.
For $\mu \in \cP^2(X)$, the \emph{relative entropy} with respect to $\fm$
is defined by
\[ \Ent_{\fm}(\mu):=\int_X \rho\log\rho \,d\fm \]
if $\mu=\rho \fm \in \cP^2_{\ac}(X)$ and $\int_{\{\rho>1\}} \rho\log\rho \,d\fm<\infty$,
otherwise $\Ent_{\fm}(\mu):=\infty$.

\begin{definition}[Curvature-dimension condition]\label{df:CD}
Let $K \in \R$.
We say that $(X,d,\fm)$ satisfies the \emph{curvature-dimension condition} $\CD(K,\infty)$
(or $(X,d,\fm)$ is a \emph{$\CD(K,\infty)$-space}) if $\Ent_{\fm}$ is \emph{$K$-convex}
in the sense that, for any $\mu_0,\mu_1 \in \cP^2(X)$,
there is a minimal geodesic $(\mu_t)_{t \in [0,1]}$ between them with respect to $W_2$
such that
\begin{equation}\label{eq:Kcon}
\Ent_{\fm}(\mu_t) \le (1-t)\Ent_{\fm}(\mu_0) +t\Ent_{\fm}(\mu_1)
 -\frac{K}{2}(1-t)t W_2^2(\mu_0,\mu_1)
\end{equation}
for all $t \in (0,1)$.
\end{definition}

One can moreover define $\CD(K,N)$ for $K \in \R$ and $N \in [1,\infty]$,
and then $\CD(K,N)$ is equivalent to the combination `$\Ric \ge K$ and $\dim \le N$'
for Riemannian manifolds equipped with the Riemannian volume measures (\cite{vRS,StI,StII,LV}).
This characterization is extended to weighted Riemannian and Finsler manifolds
by means of the \emph{weighted Ricci curvature} $\Ric_N$,
namely $\CD(K,N)$ is equivalent to $\Ric_N \ge K$ (\cite{StI,StII,LV,Oh-int}).
Recently there are further generalizations to
the cases of $N<0$ as well as $N=0$ (\cite{Oh-neg,Oh-needle}).

A particularly important example relevant to our result is the following.

\begin{example}[Gaussian spaces]\label{ex:Gauss}
Consider a weighted Euclidean space $(\R^n,|\cdot|,\e^{-\psi} dx)$,
where $\psi \in C^{\infty}(\R^n)$.
Then we have $\Ric_{\infty} =\Hess \psi$,
and hence $(\R^n,|\cdot|,\e^{-\psi} dx)$ satisfies $\CD(K,\infty)$
if and only if $\psi$ is $K$-convex ($\Hess \psi \ge K$).
For instance, the \emph{Gaussian space} $(\R^n,|\cdot|,\e^{-K|x|^2/2}dx)$
is a $\CD(K,\infty)$-space, regardless of the dimension $n$.
\end{example}

Let us recall two fundamental properties of $\CD(K,\infty)$-spaces for later convenience.

\begin{lemma}[Properties of $\CD$-spaces]\label{lm:CD}
Let $(X,d,\fm)$ be a $\CD(K,\infty)$-space.
\begin{enumerate}[{\rm (i)}]
\item For $a,b>0$, the scaled space $(X,a \cdot d,b \cdot \fm)$
satisfies $\CD(K/a^2,\infty)$.

\item If $K>0$, then the measure $\fm$ has the Gaussian decay$:$
\[ \fm\big( B_r(x) \setminus B_{r-\ve}(x) \big) \le C_1 \e^{-K(r-C_2)^2/2} \]
for some positive constants $C_i=C_i(K,\ve)$, $i=1,2$, and for $r \gg \ve$.
In particular, we have $\fm(X)<\infty$.
\end{enumerate}
\end{lemma}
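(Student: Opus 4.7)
Both properties are standard consequences of Definition~\ref{df:CD}; the first is a direct verification, the second goes back to Sturm's original work on $\CD(K,\infty)$-spaces. The plan is as follows.

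For (i), I would track how each ingredient of Definition~\ref{df:CD} transforms under the rescaling $(d,\fm) \mapsto (a \cdot d, b \cdot \fm)$. The class $\cP^2(X)$ is unchanged since second moments with respect to $d$ and $a\cdot d$ are simultaneously finite, and a curve is a minimal geodesic for one metric iff (after trivial reparameterization) it is so for the other. From the dual formula \eqref{eq:Kant}, or directly from the coupling definition, the Wasserstein distance associated to $a\cdot d$ equals $a\cdot W_2$. For the entropy, writing $\mu = \rho \fm = (\rho/b)(b\fm)$ yields $\Ent_{b\fm}(\mu) = \Ent_{\fm}(\mu) - \log b$, an additive constant that cancels on both sides of \eqref{eq:Kcon}. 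The factor $a^2$ coming from $W_2^2$ is then absorbed into the curvature bound, producing $\CD(K/a^2,\infty)$.

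For (ii), I would combine the $K$-convexity of $\Ent_{\fm}$ with a carefully chosen pair of test measures. Fix $x_0 \in X$ with $\fm(B_\ve(x_0))>0$ and, for $r \gg \ve$, set $A_r := B_r(x_0) \setminus B_{r-\ve}(x_0)$ and
\[
\mu_0 := \frac{\fm|_{B_\ve(x_0)}}{\fm(B_\ve(x_0))}, \qquad \mu_1 := \frac{\fm|_{A_r}}{\fm(A_r)}.
\]
Their entropies equal $-\log \fm(B_\ve(x_0))$ and $-\log \fm(A_r)$ respectively, and a direct coupling estimate based on the triangle inequality gives $W_2(\mu_0,\mu_1) \ge r - 2\ve$. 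Along any $W_2$-geodesic $(\mu_t)$ joining them, the support lies in a ball $B_R(x_0)$ with $R$ of the same order as $r$, and Jensen's inequality applied to the convex function $x\log x$ yields the lower bound $\Ent_{\fm}(\mu_{1/2}) \ge -\log \fm(B_R(x_0))$. Plugging these estimates into \eqref{eq:Kcon} at $t = 1/2$ and rearranging gives an inequality of the form
\[
\log \fm(A_r) \le C_0 - \frac{K}{4}(r-2\ve)^2 + 2\log \fm(B_R(x_0)),
\]
with $C_0$ depending only on $\fm(B_\ve(x_0))$. Closing this inequality (see below) produces the claimed Gaussian estimate, and summability of $\fm(A_{j\ve})$ over $j \in \N$ then yields $\fm(X) < \infty$.

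The main obstacle is the self-referential nature of (ii): the desired upper bound on $\fm(A_r)$ involves the \emph{a priori} unknown growth of $\fm(B_R(x_0))$ on the right-hand side. One way to close the loop is a dyadic induction in $r$, bootstrapping a crude initial growth bound into the stated Gaussian decay. A cleaner alternative is to first establish qualitative finiteness $\fm(X) < \infty$ by a coarser version of the same comparison argument, and then re-run the estimate using the Talagrand inequality $W_2^2(\mu, \fm/\fm(X)) \le (2/K)\Ent_{\fm/\fm(X)}(\mu)$, a well-known consequence of $\CD(K,\infty)$ once the reference measure is finite, to extract Gaussian concentration of $d(x_0,\cdot)$ and hence the sharp annular bound.
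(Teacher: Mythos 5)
Your proof of (i) is correct and complete: the entropy shifts by the additive constant $-\log b$, which cancels, the Wasserstein distance rescales by the factor $a$, and absorbing the resulting $a^2$ into the curvature parameter gives $\CD(K/a^2,\infty)$. This matches the paper, which simply declares (i) ``immediate from the definition.''

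For (ii) the paper does not give a proof at all --- it cites Sturm \cite[Theorem~4.26]{StI} --- and your sketch is indeed along the lines of that argument. You correctly flag the crucial obstacle: your intermediate inequality
\[
\log \fm(A_r) \le 2\log\fm(B_R(x_0)) - \log\fm(B_\ve(x_0)) - \frac{K}{4}(r-2\ve)^2
\]
is self-referential so long as $R$ is allowed to be comparable to $r$. The precise observation that closes the loop, and which your write-up stops just short of, is that the midpoint of \emph{any} geodesic from a point of $B_\ve(x_0)$ to a point of $A_r$ lies within distance $r/2 + 2\ve$ of $x_0$, so one may take $R = r/2 + 2\ve$, a genuine factor-of-two gain. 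This gain is what makes the dyadic/bootstrap scheme converge: seeding it with the trivial bound $\fm(B_R) < \infty$ (local finiteness) and iterating over dyadic scales, the quadratic Gaussian term dominates the doubling coming from the halved radius, so one obtains $\fm(X)<\infty$ and then, re-running the estimate once $\fm$ is normalized, the stated annular bound. Your alternative route via Talagrand has a chicken-and-egg problem as stated: the Talagrand inequality $W_2^2(\mu,\bar\fm) \le (2/K)\Ent_{\bar\fm}(\mu)$ presupposes a finite reference measure $\bar\fm = \fm/\fm(X)$, yet you propose to use it before $\fm(X)<\infty$ has been secured, and the ``coarser version of the same comparison argument'' you invoke to establish qualitative finiteness would itself need the $R \approx r/2$ gain. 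So either spell out the factor-of-two gain and run the bootstrap, or drop the Talagrand route and present the induction directly; as written the argument for (ii) has an acknowledged but unclosed gap.
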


Notice that (i) is immediate from the definition.
See \cite[Theorem~4.26]{StI} for (ii).

In order to develop analysis on $\CD(K,\infty)$-spaces,
we introduce the \emph{Cheeger energy} (named after \cite{Ch})
for $f \in L^2(X)$ as
\[ \Ch(f):=\frac{1}{2} \inf_{\{f_i\}_{i \in \N}} \liminf_{i \to \infty}
 \int_X |\nabla^L f_i|^2 \,d\fm, \]
where $\{f_i\}_{i \in \N}$ runs over all sequences of Lipschitz functions
such that $f_i \to f$ in $L^2(X)$, and
\[ 
|\nabla^L h|(x) :=\limsup_{y \to x} \frac{|h (y)- h (x)|}{d(x,y)} 
\]
for $h :X \lra \R$. 
We define the associated \emph{Sobolev space} by
\[ W^{1,2}(X):=\{ f \in L^2(X) \,|\, \Ch(f)<\infty \}. \]
Given $f \in W^{1,2}(X)$, there exists the unique \emph{minimal weak upper gradient}
$|\nabla f| \in L^2(X)$ such that
\[ \Ch(f)=\frac{1}{2} \int_X |\nabla f|^2 \,d\fm. \]
We refer to \cite{Ch,Sha,AGS-hf} for further discussions.

When $K>0$, a $\CD(K,\infty)$-space enjoys the Poincar\'e inequality \eqref{eq:Poin}
mentioned in the introduction, as well as
the log-Sobolev and Talagrand inequalities (see \cite[\S 6]{LV}).

\subsection{$\RCD(K,\infty)$-spaces}\label{ssc:RCD}

As we mentioned after Definition~\ref{df:CD},
the curvature-dimension condition does not rule out Finsler manifolds.
On the one hand, this was a starting point of the rich theory
of the weighted Ricci curvature of Finsler manifolds
(see \cite{Oh-int} and the recent survey \cite{Oh-nlga}).
On the other hand, admitting Finsler manifolds (and especially normed spaces)
causes some difficulties, for instance,
the Cheeger energy is not quadratic and the associated Laplacian is nonlinear.
For this reason, it is natural to expect a `Riemannian' version of
the curvature-dimension condition, and the following notion given in \cite{AGS-rcd}
turned out successful and has been a subject of intensive research.

\begin{definition}[Riemannian curvature-dimension condition]\label{df:RCD}
For $K \in \R$,
we say that $(X,d,\fm)$ satisfies the \emph{Riemannian curvature-dimension condition}
$\RCD(K,\infty)$ (or $(X,d,\fm)$ is an \emph{$\RCD(K,\infty)$-space})
if it satisfies $\CD(K,\infty)$ and the Cheeger energy $\Ch$ is a quadratic form
in the sense that
\begin{equation}\label{eq:intH}
\Ch(f+g) +\Ch(f-g) =2\Ch(f) +2\Ch(g) \quad
 \text{for all}\ f,g \in W^{1,2}(X).
\end{equation}
\end{definition}

See \cite{AMS2,EKS,Gi-Ondiff,Gi-split} 
for the finite-dimensional counterpart $\RCD(K,N)$.
The quadratic property \eqref{eq:intH} is called the \emph{infinitesimal Hilbertianity}
and rules out (non-Riemannian) Finsler manifolds.
$\RCD(K,\infty)$-spaces enjoy several finer properties than $\CD(K,\infty)$-spaces.
For instance, the inequality \eqref{eq:Kcon} holds along every $W_2$-geodesics
(called the \emph{strong $K$-convexity}),
and any pair $\mu_0,\mu_1 \in \cP^2_{\ac}(X)$
is connected by a unique minimal geodesic.

Thanks to \eqref{eq:intH}, by polarization we can define
$\langle \nabla f,\nabla g \rangle \in L^1(X)$ for $f,g \in W^{1,2}(X)$ by
\[ \langle \nabla f,\nabla g \rangle
 :=\frac{1}{4} \big( |\nabla(f+g)|^2 -|\nabla(f-g)|^2 \big). \]
Then the bilinear form
\[ \cE(f,g):=\int_X \langle \nabla f,\nabla g \rangle \,d\fm,
 \qquad f,g \in W^{1,2}(X), \]
is a strongly local, quasi-regular Dirichlet form 
(see \cite[Section 6.2]{AGS-rcd}, cf.~\cite[Theorem~4.1]{Sa}),
and we call its generator $\Delta:D(\Delta) \lra L^2(X)$ the \emph{Laplacian},
which is a linear, self-adjoint, nonpositive definite operator such that
\[ \cE(f,\phi) =-\int_X \phi \cdot \Delta f \,d\fm,
 \qquad \phi \in W^{1,2}(X). \]
The domain $D(\Delta)$ is dense in $W^{1,2}(X)$ and $L^2(X)$.
We refer to \cite{BH,FOT} for the basic theory of Dirichlet forms.

We now review some connections between $| \nabla f |$ 
and the Lipschitz continuity of $f$ on $\RCD (K,\infty)$-spaces. 
We have in general $|\nabla f| \le |\nabla^L f|$ $\fm$-almost everywhere
for Lipschitz functions $f \in W^{1,2} (X)$.
If $(X,d,\fm)$ satisfies the volume doubling condition
and the local $(1,2)$-Poincar\'e inequality,
then $|\nabla f|=|\nabla^L f|$ holds $\fm$-almost everywhere 
for any Lipschitz function $f$ (see \cite{Ch}). 
In our framework of $\RCD(K,\infty)$-spaces, however,
both the doubling condition and the local Poincar\'e inequality may fail
(only weaker estimates such as Lemma~\ref{lm:CD}(ii)
as well as a sort of local Poincar\'e inequality in \cite{Ra} are available).
Nonetheless, we know that $f \in W^{1,2}(X)$ satisfying $|\nabla f| \le C$
for some $C \ge 0$ admits a $C$-Lipschitz {representative}
(see \cite[Theorem~6.2]{AGS-rcd} - this has been called the \emph{Sobolev-to-Lipschitz property} in
\cite[Theorem~6.8]{Gi-split}), 
and this fact is sufficient for our purpose.

The heat semigroup $(\sH_t)_{t \ge 0}$ associated with the Laplacian $\Delta$
enjoys various regularization properties.
For instance, the set $\mathcal{A} :=\bigcup_{t>0} \sH_t L^{\infty}(X)$
is dense both in $W^{1,2}(X)$ and $D(\Delta)$.
We also recall the following for later use.

\begin{proposition}[$L^{\infty}$-Lipschitz regularization]\label{pr:Lip}
If $f \in L^{\infty}(X)$, then $\sH_t f$ is Lipschitz for all $t>0$.
\end{proposition}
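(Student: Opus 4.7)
The plan is to combine the Bakry--Ledoux reverse Poincar\'e inequality for the heat semigroup with the Sobolev-to-Lipschitz property recalled just before the statement. On any $\RCD(K,\infty)$-space the Bakry--\'Emery condition $\BE(K,\infty)$ is available (the equivalence with $\RCD(K,\infty)$ was established in \cite{AGS-rcd}), and a standard calculation based on it yields the pointwise reverse Poincar\'e estimate
\[
 2 \bigg( \int_0^t \e^{2Ks}\,ds \bigg) |\nabla \sH_t g|^2
 \le \sH_t(g^2) - (\sH_t g)^2 \qquad \fm\text{-a.e.,}
\]
for every $g \in L^2(X)$ and every $t > 0$. The classical derivation is to differentiate $s \mapsto \e^{2Ks}\sH_s\big((\sH_{t-s}g)^2\big)$ on $[0,t]$, apply the chain rule for $\Delta$ together with the gradient estimate in $\BE(K,\infty)$, and integrate.

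Apply this to $f \in L^\infty(X)$. Note that $L^\infty(X) \subset L^2(X)$ holds in the $K>0$ setting of our interest thanks to the finiteness of $\fm$ stated in Lemma~\ref{lm:CD}(ii); in general one argues by truncation and passes to the limit using $L^\infty$-contractivity. Using the sub-Markov bound $\sH_t(f^2) \le \|f\|_\infty^2$ together with $(\sH_t f)^2 \ge 0$, the reverse Poincar\'e inequality gives
\[
 |\nabla \sH_t f|^2
 \le \frac{\|f\|_\infty^2}{2\int_0^t \e^{2Ks}\,ds}
 \qquad \fm\text{-a.e.,}
\]
so $|\nabla \sH_t f| \in L^\infty(X)$ with a quantitative bound depending only on $\|f\|_\infty$, $K$, and $t>0$.

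Finally, since $\sH_t f \in W^{1,2}(X)$ by the standard regularizing properties of the heat semigroup, and its minimal weak upper gradient has the above $L^\infty$-bound, the Sobolev-to-Lipschitz property on $\RCD(K,\infty)$-spaces recalled just before the statement provides a Lipschitz representative of $\sH_t f$, with Lipschitz constant controlled by $\|f\|_\infty/\sqrt{2\int_0^t \e^{2Ks}\,ds}$. This is the desired conclusion.

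The only non-elementary ingredient is the reverse Poincar\'e inequality used in the first step: on smooth weighted Riemannian manifolds it follows immediately from $\Gamma_2 \ge K\Gamma$, but in the present non-smooth setting it rests on the now-standard yet non-trivial equivalence between $\RCD(K,\infty)$ and $\BE(K,\infty)$; everything else is a soft consequence of $L^\infty$-contractivity and the Sobolev-to-Lipschitz transfer.
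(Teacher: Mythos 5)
Your proof is correct. The paper itself gives no argument here: it simply cites \cite[Theorem~6.5]{AGS-rcd} (and \cite[Theorem~7.3]{AGMR}), and the proof in those references is exactly the one you wrote out — the Bakry--Ledoux reverse Poincar\'e inequality, followed by the sub-Markov bound and the Sobolev-to-Lipschitz transfer. So you have reconstructed the standard proof of the cited result rather than taken a different route.

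One small nit on the derivation you sketch: the weight $\e^{2Ks}$ in the interpolant $s \mapsto \e^{2Ks}\sH_s\big((\sH_{t-s}g)^2\big)$ is not what one wants and in fact gets in the way. The clean choice is $\Phi(s) := \sH_s\big((\sH_{t-s}g)^2\big)$, for which the Leibniz rule gives $\Phi'(s) = 2\,\sH_s\big(|\nabla \sH_{t-s}g|^2\big)$; the exponential factor then enters not from the interpolant but from the $\BE(K,\infty)$ gradient estimate $|\nabla\sH_t g|^2 = |\nabla \sH_s(\sH_{t-s}g)|^2 \le \e^{-2Ks}\sH_s(|\nabla\sH_{t-s}g|^2)$, which bounds $\Phi'(s) \ge 2\e^{2Ks}|\nabla\sH_t g|^2$ and yields your inequality upon integration. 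The rest — the sub-Markov bound $\sH_t(f^2)\le\|f\|_\infty^2$, the remark that $L^\infty\subset L^2$ once $\fm(X)<\infty$, and the passage to a Lipschitz representative via the Sobolev-to-Lipschitz property — is exactly right.
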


See \cite[Theorem~6.5]{AGS-rcd} (and \cite[Theorem~7.3]{AGMR})
for a quantitative estimate of the Lipschitz constant.
Moreover, $\sH_t$ can be extended canonically 
to a map from $\cP^2 (X)$ to itself, and
the \emph{$W_2$-contraction property} holds:
\begin{equation}\label{eq:Wcont}
W_2 \big( \sH_t(\mu),\sH_t(\nu) \big) \le \e^{-Kt} W_2(\mu,\nu), \qquad
 \mu,\nu \in \cP^2(X).
\end{equation}
This property \eqref{eq:Wcont} in fact characterizes $\RCD(K,\infty)$-spaces
among infinitesimally Hilbertian spaces, see \cite[Theorem~4.1]{Sa}
and \cite{AGS-rcd,AGS-be} for the precise statement.
It is worthwhile to mention that \eqref{eq:Wcont} fails in normed spaces
and Finsler manifolds (\cite{OS-nc}).

Set
\[ \bbD_{\infty}(X):=\{ f \in D(\Delta) \cap L^{\infty}(X) \,|\,
 |\nabla f| \in L^{\infty}(X),\ \Delta f \in W^{1,2}(X) \} \]
(which is denoted by $\text{TestF}(X)$ in \cite{Gi-dg}).
Note that $\mathcal{A} \subset \bbD_{\infty}(X)$.
In an $\RCD(K,\infty)$-space the \emph{Bochner inequality}
\begin{equation}\label{eq:Boch}
\frac{1}{2}\Delta(|\nabla f|^2) -\langle \nabla f,\nabla(\Delta f) \rangle
 \ge K|\nabla f|^2, \qquad f \in \bbD_{\infty}(X),
\end{equation}
holds in a weak sense (\cite{GKO,AGS-rcd,AGS-be}).
{The} {more precise definition of ``weak sense'' 
will be discussed in section~\ref{sc:affine}. }
Note that, for $f \in \bbD_{\infty}(X)$,
we have $|\nabla f|^2 \in W^{1,2}(X) \cap L^{\infty}(X)$ (\cite[Lemma~3.2]{Sa})
and hence (the continuous version of) $f$ is Lipschitz.
The inequality \eqref{eq:Boch} also characterizes $\RCD(K,\infty)$-spaces,
see \cite{AGS-be} and \cite[Theorem~4.1]{Sa}.

On Riemannian manifolds, the Bochner-Weitzenb\"ock formula yields that 
the left hand side is nothing but the sum of the Hilbert-Schmidt norm of 
$\Hess f$ and $\Ric (\nabla f, \nabla f )$. 
Thus it seems that the Hessian is missing in \eqref{eq:Boch}. 
Nevertheless, we are somehow able to recover it from \eqref{eq:Boch} 
by a so-called self-improvement technique going back to Bakry \cite{Ba1}. 
As we guess from the argument in \cite{CZ} on a Riemannian manifold, 
this self-improvement plays a fundamental role in the sequel. 
There are two different (but closely related) notions of the ``Hessian'' 
in this context. 
The one is $H[f]$ in terms of $\Gamma$-calculus 
and the other one is $\Hess f$ introduced in \cite{Gi-dg} 
on $\RCD (K,\infty)$ spaces. 
For $f \in \mathbb{D}_\infty (X)$, the former one 
$H [f] : \mathbb{D}_\infty (X) \times \mathbb{D}_\infty (X) \to L^2 (X)$
is defined as follows: 
\[
H[f](\phi,\psi) 
: = 
\frac{1}{2} \Big\{
 \big\langle \nabla\phi, \nabla \langle \nabla f,\nabla\psi \rangle \big\rangle
 +\big\langle \nabla\psi, \nabla \langle \nabla f,\nabla\phi \rangle \big\rangle
 -\big\langle \nabla f, \nabla \langle \nabla\phi,\nabla\psi \rangle \big\rangle \Big\}.
\]
The latter one, $\Hess f$, is more complicated and we omit the precise definition of it, 
since $\Hess f$ is a \emph{tensorial} object for vector fields unlike $H[f]$ 
(see \cite[Definition~3.3.1]{Gi-dg}). 
In smooth context, $\Hess f$ precisely coincides with the classical definition. 
For $f \in \mathbb{D}_\infty (X)$, we can define $\Hess f$ and it is identified with $H[f]$ 
in the following sense: For $\phi , \psi \in \mathbb{D}_\infty (X)$, 
we can define the associated gradient vector fields $\nabla \phi$ and $\nabla \psi$.
Then $\Hess f ( \nabla \phi , \nabla \psi )$ makes sense and 
it coincides with $H[f] (\phi, \psi)$ $\fm$-a.e.\ \cite[Theorem~3.3.8]{Gi-dg}. 
For a formal computation, a self-improvement involving $H[f]$ would be sufficient, 
but we need a stronger one involving $\Hess f$ to overcome technical difficulties 
(Note that such a difficulty arises from the fact that 
the eigenfunction does not belong to $\mathbb{D}_\infty (X)$). 
Indeed there are some advantages in working with $\Hess f$. 
Among others, $\Hess f$ is defined for $f \in W^{2,2} (X)$, 
where $W^{2,2} (X)$ is the second order Sobolev space 
(see \cite[Definition~3.3.1]{Gi-dg}). 
The only property of $W^{2,2} (X)$ we need in this article is 
$D ( \Delta ) \subset W^{2,2} (X)$ \cite[Corollary~3.3.9]{Gi-dg}. 
From this, we can see that $W^{2,2} (X)$ is much larger than $\mathbb{D}_\infty (X)$. 
As a tensorial object, we can define the Hilbert-Schmidt norm $| \Hess f |_{\HS}$ 
of $\Hess f$ and it gives an upper bound of the operator norm in the following sense: 
For $f, \phi, \psi \in \mathbb{D}_\infty (X)$, 
\[
| H [f] (\phi , \psi )| \le | \Hess f |_{\HS} | \nabla \phi | | \nabla \psi | 
\quad \mbox{$\fm$-a.e.}\ .
\]
The strongest self-improvement of \eqref{eq:Boch} 
involving $\Hess f$ in our framework is given 
as follows (see \cite[Theorem~3.3.8]{Gi-dg}):
\begin{equation}\label{eq:Sav}
\frac{1}{2}\Delta(|\nabla f|^2) -\langle \nabla f,\nabla(\Delta f)\rangle 
 \ge |\Hess f|^2_{\HS}+ K|\nabla f|^2
\end{equation}
in a weak sense for $f \in \bbD_{\infty}(X)$ 
(see \cite[Theorem~3.4]{Sa} for the weaker one involving $H[f]$ in our framework). 
One can in fact show \eqref{eq:Sav} in the $\fm$-almost everywhere sense
by replacing the left-hand side with the absolutely continuous part of 
the $\Gamma_2$-operator, see \cite{Sa,Gi-dg} for details.

\section{First step: Lift of the eigenfunction is affine}\label{sc:affine}

We start the proof of Theorem~\ref{th:main}, divided into 4 steps. 
The first step is to show that the lift of the eigenfunction 
to the $L^2$-Wasserstein space is affine. 
We note that the statement does not follow from the recent result 
by Gigli and Tamanini on the second differentiation formula \cite{GT}
since such result is crucially based on finite dimensionality.

Recall from Lemma~\ref{lm:CD}(i) that we can normalize the curvature bound as $K=1$
without loss of generality.
Thus let $(X,d,\fm)$ be an $\RCD(1,\infty)$-space from here on.
Thanks to \cite[Proposition~6.7]{GMS},
the spectrum of the Laplacian is discrete and
the hypothesis $\lambda_1=1$ implies the existence of
an eigenfunction $u \in D(\Delta)$ satisfying $\Delta u=-u$.
Adapting the discussion in \cite{Ke-obata},
we will show that the lift $\cU$ of $u$ to $\cP^2(X)$, defined by
\begin{equation}\label{eq:U}
\cU(\mu) :=\int_X u \,d\mu, \qquad \mu \in \cP^2(X),
\end{equation}
is \emph{affine} (or \emph{totally geodesic}).
To be precise, we prove the following.

\begin{theorem}[Lift of $u$ is affine]\label{th:affine}
Assume that $(X,d,\fm)$ is an $\RCD(1,\infty)$-space,
and let $u\in D(\Delta)$ satisfy $\Delta u=-u$.
Then the function $\cU$ in \eqref{eq:U} is well-defined on $\cP^2(X)$,
and is affine on $\cP^2_{\ac}(X)$ in the sense that,
for every $L^2$-Wasserstein geodesic $(\mu_t)_{t \in [0,1]}$
with $\mu_0,\mu_1 \in \cP^2_{\ac}(X)$, we have
\[ \cU(\mu_t) =(1-t)\cU(\mu_0) +t\cU(\mu_1)
 \qquad \text{for all}\ t \in [0,1]. \]
\end{theorem}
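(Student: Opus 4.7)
The plan has three parts: (i) check that $\cU$ is well-defined on $\cP^2(X)$; (ii) show $\Hess u \equiv 0$ $\fm$-almost everywhere by using the equality case in the self-improved Bochner inequality \eqref{eq:Sav}; (iii) transfer this pointwise rigidity to affinity of $\cU$ along Wasserstein geodesics, adapting the scheme of \cite{Ke-obata}. For (i), I would apply \eqref{eq:Boch} to $u$: using $\Delta u = -u$, the $|\nabla u|^2$ terms on the two sides cancel, leaving $\frac{1}{2}\Delta(|\nabla u|^2) \ge 0$ weakly. Combined with the finiteness and Gaussian decay of $\fm$ from Lemma \ref{lm:CD}(ii), this subharmonicity should force $|\nabla u| \in L^\infty(X)$, so the Sobolev-to-Lipschitz property produces a Lipschitz representative of $u$ with at most linear growth, and consequently $u \in L^1(\mu)$ for every $\mu \in \cP^2(X)$.

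For (ii), I integrate \eqref{eq:Sav} with $K=1$ and $f=u$ against $\fm$. The term $\frac{1}{2}\int \Delta(|\nabla u|^2)\,d\fm$ vanishes (the constant $1$ is admissible as a test function since $\fm(X)<\infty$), and $-\int \langle \nabla u, \nabla(\Delta u)\rangle\,d\fm = \int |\nabla u|^2\,d\fm$ exactly cancels the corresponding term on the right-hand side of \eqref{eq:Sav}, leaving $\int |\Hess u|_{\HS}^2\,d\fm \le 0$; hence $\Hess u \equiv 0$ $\fm$-almost everywhere. A subtle point is that \eqref{eq:Sav} is stated for $f \in \bbD_\infty(X)$ whereas $u$ merely lies in $D(\Delta) \subset W^{2,2}(X)$. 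One way around this is to approximate $u$ via $\sH_\varepsilon u = \e^{-\varepsilon} u$, which lies in $\bbD_\infty(X)$ once $u$ is bounded (via Proposition \ref{pr:Lip}); alternatively one invokes the $W^{2,2}$-valid version of the self-improvement from \cite{Gi-dg} recalled at the end of Section~\ref{sc:prel}.

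For (iii), let $(\mu_t)_{t \in [0,1]}$ be the unique $W_2$-geodesic joining $\mu_0, \mu_1 \in \cP^2_{\ac}(X)$, whose existence and uniqueness come from infinitesimal Hilbertianity. Formally one has
\[
\frac{d^2}{dt^2}\cU(\mu_t) = \int_X \Hess u(\nabla \varphi_t, \nabla \varphi_t)\,d\mu_t,
\]
where $\varphi_t$ is the Hopf-Lax evolution of a Kantorovich potential, so $\Hess u \equiv 0$ would give $\ddot\cU(\mu_t) \equiv 0$ and hence affinity. Rather than pass through this differentiation directly, I would follow \cite{Ke-obata} and extract from the equality case in \eqref{eq:Sav} a two-sided displacement-convexity bound for $\cU$ along $(\mu_t)$ — an upper and a lower inequality of $K$-convexity type for arbitrary $K>0$ — and combine them to conclude that $\cU$ is simultaneously $0$-convex and $0$-concave, hence affine. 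The identification $\Hess u(\nabla \phi, \nabla\psi) = H[u](\phi,\psi)$ from \cite{Gi-dg} bridges the tensorial Hessian with the $\Gamma$-calculus expressions that appear naturally when integrating along the geodesic.

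The main obstacle I expect is precisely this last step. The corresponding argument in \cite{Ke-obata} is carried out in $\RCD(N-1,N)$ and leans on finite-dimensional tools (doubling, local compactness, regularity of the intermediate measures) which are unavailable in $\RCD(1,\infty)$; moreover $u \notin \bbD_\infty(X)$, so the Hessian calculus of \cite{Gi-dg} has to be extended to $W^{2,2}$, and the Kantorovich potentials along $(\mu_t)$ must be tracked with sufficient regularity to rigorously upgrade the $\fm$-a.e.\ identity $\Hess u \equiv 0$ to the integral statement $\cU(\mu_t) = (1-t)\cU(\mu_0) + t\cU(\mu_1)$. Bridging this gap is where the genuine technical work of the proof lies.
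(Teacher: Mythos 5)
Your high-level strategy matches the paper's: establish $\Hess u = 0$ $\fm$-a.e.\ by saturating the self-improved Bochner inequality \eqref{eq:Sav}, normalize so that $|\nabla u|$ is bounded and hence $u$ has a Lipschitz representative, and then upgrade the pointwise vanishing of the Hessian to affinity of $\cU$ along $W_2$-geodesics by adapting \cite{Ke-obata}. Part (ii) is essentially correct and coincides with Proposition~\ref{pr:eigen}(i), though your specific approximation $\sH_\ve u = \e^{-\ve}u$ does not help (it remains unbounded); the paper instead proves an extension of \eqref{eq:Sav} to $f \in D(\Delta)$ directly, via a sequence $f_n \in \bbD_\infty(X)$ with $f_n,|\nabla f_n|,\Delta f_n$ converging in $L^2$ and $|\Hess(f_n - f)|_{\HS} \to 0$ from \cite[Corollary~3.3.9]{Gi-dg}. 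Also in part (i), ``subharmonicity of $|\nabla u|^2$ plus $\fm(X)<\infty$ forces $|\nabla u| \in L^\infty$'' is not a valid step as stated; the paper shows the stronger fact that $|\nabla u|$ is \emph{constant}, using equality in \eqref{eq:Boch1}, hypercontractivity of $\sH_t$ to get $|\nabla u|^2 \in L^2$, heat-flow regularization, and the Poincar\'e inequality.

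The genuine gap is in part (iii), and you correctly identify it, but you do not fill it. What is missing is precisely the paper's Theorem~\ref{th:kappa} (Hessian bound implies convexity) and its proof mechanism: one modifies the reference measure to $\tilde\fm = \e^{-v}\fm$, proves via an explicit $\Gamma_2'$-computation (Propositions~\ref{pr:tildeG} and~\ref{pr:s->0}) that $(X,d,\tilde\fm)$ satisfies $\RCD(K-\kappa,\infty)$, and then runs a metric-scaling argument ($d \mapsto \alpha^{-1}d$, $v \mapsto v/\alpha^2$, $\alpha \to 0$) to peel the convexity of $\cV$ away from $\Ent_{\tilde\fm}$. Because this lemma requires $v \in \bbD_\infty(X)$ bounded with $\Hess v \ge -\kappa$, the final step is the specific smooth truncation $v_n = g_n \circ u$ with $g_n(r) = n\arctan(r/n)$, for which $\Hess v_n = g_n''(u)\,\nabla u \otimes \nabla u$ by the chain rule and so $\kappa_n := \sup|g_n''| \to 0$; only after letting $n \to \infty$ along geodesics with bounded support, and exhausting $X$, does one obtain $0$-convexity (and by symmetry $0$-concavity) of $\cU$. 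Your ``formal second-derivative'' heuristic and the appeal to a ``two-sided displacement-convexity bound'' gesture at this conclusion, but without the measure-modification lemma, the scaling, and the truncation, there is no rigorous route from $\Hess u = 0$ to the displayed identity.
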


Notice that \eqref{eq:Kcon} implies $\mu_t \in \cP^2_{\ac}(X)$ for all $t \in (0,1)$.
We will see that $u$ itself is affine in Proposition~\ref{pr:u-affine}.

Before beginning the proof of Theorem~\ref{th:affine},
let us recall some notations from the \emph{$\Gamma$-calculus}. 
Note that we will use some of these notations even before knowing 
that the underlying metric measure space satisfies
the $\CD (K,\infty)$ condition. 
We moreover will use the following notational convention:
Given a subspace $V \subset L^2(X)$, we define
\[ D_V(\Delta) :=\{ f \in D(\Delta) \,|\, \Delta f \in V \}. \]
{In addition, 
$W^{1,2,(\infty)} (X) : = \{ f \in W^{1,2} (X) \mid f, | \nabla f | \in L^\infty (X) \}$.}
For instance, we have
\[
\mathbb{D}_{\infty}(X) 
:=
D_{W^{1,2}}(\Delta) 
\cap 
{W^{1,2,(\infty)} (X)}.
\]
{Recall 
that $f \in D_{L^\infty} (\Delta) \cap L^\infty (X)$ implies $| \nabla f | \in L^\infty (X)$ 
on $\RCD (K, \infty)$ spaces \cite[Theorem~3.1]{ams}.
In particular, 
$D_{L^\infty} (\Delta) \cap L^\infty (X) \subset D_{L^\infty} (\Delta) \cap W^{1,2,(\infty)} (X)$ 
holds on $\RCD (K,\infty)$ spaces.}
 
For $f \in D_{W^{1,2}}(\Delta)$ and $\phi \in D_{L^{\infty}}(\Delta)\cap L^{\infty}(X)$,
the \emph{$\Gamma_2$-operator} is defined as the integral of the left-hand side
of the Bochner inequality \eqref{eq:Boch} or \eqref{eq:Sav} against the test function $\phi$:
\[ \Gamma_2(f;\phi) :=\frac{1}{2}\int_X |\nabla f|^2 \Delta \phi \,d\fm
 -\int_X \langle\nabla f,\nabla \Delta f \rangle\phi \,d\fm. \]
Then the weak form of the Bochner inequality \eqref{eq:Boch}
(also called the \emph{$\Gamma_2$-inequality}) is written as,
provided that $\phi \ge 0$,
\begin{equation} \label{eq:Boch1}
 \Gamma_2(f;\phi) \ge \int_X |\nabla f|^2 \phi \,d\fm. 
\end{equation}
Similarly, the improved Bochner inequality \eqref{eq:Sav} is written as 
\begin{equation} \label{eq:Boch2}
 \Gamma_2(f;\phi) \ge \int_X | \Hess f |_{\HS}^2 \phi \, d \fm + \int_X |\nabla f|^2 \phi \,d\fm 
\end{equation}
for $f \in \mathbb{D}_\infty (X)$ and $\phi \ge 0$ \cite[Theorem~3.3.8]{Gi-dg}. 
We in addition define for later use
\begin{equation} \label{eq:Gamma2'} 
\Gamma'_2(f;\phi) :=\frac{1}{2} \int_X |\nabla f|^2 \Delta \phi \,d\fm
 +\int_X (\Delta f)^2 \phi \,d\fm +\int_X \langle\nabla \phi,\nabla f\rangle\Delta f \,d\fm 
\end{equation}
for $f \in D(\Delta)$ and $\phi \in D_{L^{\infty}}(\Delta)\cap {W^{1,2,(\infty)} (X)}$. 
In the intersection of the domains of $\Gamma_2$ and of $\Gamma'_2$ -- that is,
for $f \in D_{W^{1,2}}(\Delta)$ and 
$\phi \in D_{L^{\infty}}(\Delta) \cap {W^{1,2,(\infty)} (X)}$ --   
the integration by parts shows that $\Gamma_2(f;\phi)$ and $\Gamma'_2(f;\phi)$ coincide. 

We collect some properties of $u$ derived from the Bochner inequality
in the next proposition.

\begin{proposition}[Properties of $u$]\label{pr:eigen}
Let $(X,d,\fm)$ be an $\RCD(1,\infty)$-space, and consider $u \in D(\Delta)$ with $\Delta u=-u$.
Then,
\begin{enumerate}[{\rm (i)}]
\item $\Hess u=0$ holds $\fm$-almost everywhere$;$
\item $|\nabla u|$ is constant $\fm$-almost everywhere.
\end{enumerate}
\end{proposition}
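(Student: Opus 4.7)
I would derive both assertions by applying the improved Bochner inequality \eqref{eq:Sav} to $u$ against the constant test function $\phi\equiv 1$, combined with the eigenvalue relation $\Delta u=-u$. The setup is that Lemma~\ref{lm:CD}(ii) guarantees $\fm(X)<\infty$ (since $K=1>0$), so $\phi\equiv 1$ lies in $W^{1,2,(\infty)}(X)$ with $\Delta 1=0\in L^{\infty}(X)$, making it an admissible test function; at the same time $u\in D(\Delta)\subset W^{2,2}(X)$, so $\Hess u$ in the sense of \cite{Gi-dg} is defined, and $\Delta u=-u\in W^{1,2}(X)$ puts $u$ in $D_{W^{1,2}}(\Delta)$.

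For (i), I would pass to the equivalent $\Gamma_2'$-form \eqref{eq:Gamma2'} with $\phi\equiv 1$: the terms involving $\Delta\phi$ and $\nabla\phi$ vanish, leaving
\[
\Gamma_2'(u;1)=\int_X(\Delta u)^2\,d\fm=\int_X u^2\,d\fm.
\]
Integration by parts yields $\int_X|\nabla u|^2\,d\fm=-\int_X u\,\Delta u\,d\fm=\int_X u^2\,d\fm$, so substituting into the integrated form of \eqref{eq:Sav} against $\phi\equiv 1$ produces
\[
\int_X u^2\,d\fm\ \ge\ \int_X|\Hess u|^2_{\HS}\,d\fm+\int_X u^2\,d\fm,
\]
which forces $\int_X|\Hess u|^2_{\HS}\,d\fm\le 0$, and hence $\Hess u=0$ $\fm$-a.e.

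For (ii), once $\Hess u=0$ $\fm$-a.e., I would combine it with the algebraic identity $H[u](\phi,u)=\tfrac{1}{2}\langle\nabla\phi,\nabla|\nabla u|^2\rangle$, which is immediate from the definition of $H[u]$ with $\psi=u$ (the second and third terms cancel), and with the $\fm$-a.e.\ identification $\Hess u(\nabla u,\nabla\phi)=H[u](u,\phi)$ from \cite[Theorem~3.3.8]{Gi-dg}, to conclude $\langle\nabla\phi,\nabla|\nabla u|^2\rangle=0$ $\fm$-a.e.\ for every test $\phi$. Thus $|\nabla u|^2$ has vanishing weak gradient; the Sobolev-to-Lipschitz property (mentioned after Proposition~\ref{pr:Lip}), together with the connectedness of the geodesic space $(X,d)$, then forces $|\nabla u|^2$ to be $\fm$-a.e.\ constant.

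The hard part will be that $u\in D(\Delta)$ is not a priori in $\bbD_{\infty}(X)$: neither $u$ nor $|\nabla u|$ need be essentially bounded, so the statements of \eqref{eq:Sav} and of the Hessian identification recorded in the excerpt for $\bbD_{\infty}$-functions do not apply verbatim. I would resolve this by appealing to the $\fm$-almost-everywhere version of the improved Bochner inequality (with the left-hand side replaced by the absolutely continuous part of the $\Gamma_2$-measure, as noted just after \eqref{eq:Sav}), which extends to $D(\Delta)\subset W^{2,2}(X)$ in the framework of \cite{Sa,Gi-dg}, and then integrating against $\phi\equiv 1$, which is admissible because $\fm(X)<\infty$. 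The Leibniz-type identity used in (ii) is in turn justified by $\Hess u$ being a $(0,2)$-tensor on the $W^{2,2}$-class of \cite{Gi-dg}, which extends the $\bbD_{\infty}$-identification to our setting.
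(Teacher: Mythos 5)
Your treatment of (i) matches the paper's proof: both pass to $\Gamma_2'$ with $\phi\equiv 1$, compute $\Gamma_2'(u;1)=\int u^2\,d\fm=\int|\nabla u|^2\,d\fm$, and cancel against the $K|\nabla u|^2$ term to force $\int|\Hess u|^2_{\HS}\,d\fm\le 0$. Where the paper runs an explicit $\bbD_\infty$-approximation $f_n\to u$ to upgrade \eqref{eq:Boch2} to the $D(\Delta)$-class, you instead invoke the a.e.\ formulation of \eqref{eq:Sav} for $W^{2,2}(X)\supset D(\Delta)$ and integrate; this is a legitimate shortcut and is consistent with the paper's remark after \eqref{eq:Sav}.

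For (ii) you bypass the paper's primary argument (hypercontractivity of $\sH_t$, equality in \eqref{eq:const}, commuting $\sH_t$ past $\Delta$) and instead take what the paper records only as a secondary alternative: exploit the Leibniz rule $d|\nabla u|^2=2\Hess u(\nabla u,\cdot)$ and conclude from $\Hess u=0$. The algebra of $H[u](\phi,u)$ is fine, and the idea is right, but there is a genuine gap in the final step. You jump from ``$|\nabla u|^2$ has vanishing weak gradient'' to ``Sobolev-to-Lipschitz forces constancy,'' yet the Sobolev-to-Lipschitz property available on an $\RCD(K,\infty)$-space is stated for $f\in W^{1,2}(X)$ with $|\nabla f|\le C$. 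What \cite[Proposition~3.3.22(ii)]{Gi-dg} actually delivers for $u\in W^{2,2}(X)$ is that $|\nabla u|^2\in H^{1,1}(X)$ (the closure of $\bbD_\infty$ in $W^{1,1}$) with $d|\nabla u|^2=0$ — a priori only $W^{1,1}$-type regularity, not $W^{1,2}$. The paper fills exactly this hole: it cites \cite[Proposition~3.3.14(ii)]{Gi-dg} to promote $|\nabla u|^2$ from $H^{1,1}$ with bounded differential to the Sobolev class $\mathcal{S}^2(X)$ with the same (vanishing) differential, and only then invokes Sobolev-to-Lipschitz. (The paper's primary proof avoids the issue differently, using hypercontractivity to get $|\nabla u|^2\in L^2$ and then a heat-semigroup argument plus the Poincar\'e inequality.) So your plan for (ii) is essentially the paper's alternative proof, but the upgrade of the Sobolev class of $|\nabla u|^2$ before applying Sobolev-to-Lipschitz is a real missing step that needs to be spelled out.
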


\begin{proof}
We first show that 
{we can replace $\Gamma_2$ with $\Gamma_2'$ 
in the improved Bochner inequality \eqref{eq:Boch2} for $f \in D(\Delta)$
and nonnegative $\phi \in D_{L^\infty}(X) \cap L^\infty (X)$} 
by slightly modifying the discussion in \cite[Corollary~3.3.9]{Gi-dg}. 
Recall that $\Hess f$ is well-defined for $f\in D ( \Delta )$.
Pick a sequence $\{f_n\}_{n \in \N} {\subset} \bbD_{\infty}(X)$
such that $f_n, |\nabla f_n|, \Delta f_n$ converge to $f, |\nabla f|, \Delta f$ in $L^2(X)$, respectively.
Since $f_n \in \bbD_\infty (X) \subset D_{W^{1,2}} (\Delta)$, 
the improved Bochner inequality \eqref{eq:Boch2} 
together with the remark after the definition of $\Gamma_2'$ yields
\begin{equation*} 
\Gamma_2' (f_n ; \phi) 
\ge 
\int_X | \Hess f_n |_{\HS}^2 \phi \, d \fm + \int_X | \nabla f_n |^2 \phi \,d\fm .
\end{equation*}
Since $\phi, |\nabla \phi|, \Delta \phi \in L^{\infty}(X)$,
the hypotheses on $f_n,|\nabla f_n|,\Delta f_n$ imply the convergences of the left-hand side
and the second term in the right-hand side 
to the corresponding quantities for $f$, 
for instance,
\[ \left| \int_X (\Delta f_n)^2 {\phi} \,d\fm -\int_X (\Delta f)^2 {\phi} \,d\fm \right|
 \le \|{\phi}\|_{L^{\infty}} \|\Delta(f_n-f)\|_{L^2} \|\Delta(f_n+f)\|_{L^2}
 \to 0 \]
as $n \to \infty$.
Moreover, by \cite[Corollary~3.3.9]{Gi-dg}, $|\!\Hess (f_n-f)|_{\HS} \to 0$ in $L^2(X)$.
Hence we have, by {taking the limit}, 
\begin{equation} \label{eq:Boch3}
\Gamma_2' (f ; \phi) 
\ge 
\int_X | \Hess f |_{\HS}^2 \phi \, d \fm + \int_X | \nabla f |^2 \phi \,d\fm .
\end{equation}
This is nothing but what we claimed. 

(i)
Applying the improved Bochner inequality \eqref{eq:Boch3} to $f = u$ and $\phi \equiv 1$
(recall $\fm(X)<\infty$ from Lemma~\ref{lm:CD}(ii))
and using the integration by parts, we have
\[ 0 \ge \int_X |\!\Hess u|_{\HS}^2 \,d\fm. \]
Therefore $\Hess u=0$ holds $\fm$-almost everywhere.

(ii)
{Since $u \in D_{W^{1,2}} (\Delta)$ by $\Delta u = - u$,}
\eqref{eq:Boch1} yields
\begin{equation}\label{eq:const}
\int_X |\nabla u|^2 \Delta\phi \,d\fm \ge 0.
\end{equation}
for nonnegative $\phi \in D_{L^{\infty}}(\Delta) \cap L^{\infty}(X)$. 
One indeed has equality by replacing $\phi$ with $\|\phi\|_{L^{\infty}}-\phi$.

Now, thanks to the log-Sobolev inequality following from the $\RCD(1,\infty)$-condition,
we have the hypercontractivity of $\sH_t$.
Therefore $|\nabla u| \in L^2(X)$ yields $\sH_t(|\nabla u|) \in L^4(X)$
for sufficiently large $t>0$.
Combining this with $\sH_t u=\e^{-t} u$ (since $\Delta u=-u$)
and the gradient estimate $|\nabla (\sH_t u)| \le \e^{-t} \sH_t(|\nabla u|)$ (see \cite{Sa}),
we obtain $|\nabla u| \in L^4(X)$ and hence $|\nabla u|^2 \in L^2(X)$.
Then we deduce from equality in \eqref{eq:const} that
\begin{align*}
0 &=\int_X |\nabla u|^2 \Delta (\sH_t \phi) \,d\fm
 =\int_X |\nabla u|^2 \sH_t(\Delta \phi) \,d\fm
 =\int_X \sH_t(|\nabla u|^2) \Delta\phi \,d\fm \\
&=\int_X \Delta[\sH_t(|\nabla u|^2)] \phi \,d\fm.
\end{align*}
Since $\phi$ was arbitrary, $\Delta [\sH_t(|\nabla u|^2)]=0$ holds $\fm$-almost everywhere.
The Poincar\'e inequality then implies that  $\sH_t(|\nabla u|^2)$ is constant
$\fm$-almost everywhere.
Finally, letting $t \to 0$, we conclude that $|\nabla u|^2$ is constant $\fm$-almost everywhere.

Using the framework provided in \cite{Gi-dg} there is also an alternative way to argue for deducing the claim that $|\nabla u|^2$ is constant. 
Proposition 3.3.22 (ii) in \cite{Gi-dg} ensures that $|\nabla u|^2$ belongs to $H^{1,1}(X)$ with $d|\nabla u|^2=0$. $H^{1,1}(X)$ is the closure of $\bbD_{\infty}(X)$ 
in $W^{1,1}(X)$ and $d$ denotes the exterior derivative on a metric measure space.
Then Proposition 3.3.14 (ii) in \cite{Gi-dg} yields that $|\nabla u|^2$ also belongs to the Sobolev class $\mathcal{S}^2(X)$ with the same differential. 
Finally the Sobolev-to-Lipschitz property applies and yields the claim.
%
$\qedd$
\end{proof}

Since $u$ is not constant, we can normalize $u$ so as to satisfy
\begin{equation}\label{eq:|du|=1}
|\nabla u|=1 \qquad \text{$\fm$-almost everywhere}.
\end{equation}
Hence $u$ is $1$-Lipschitz.
This in particular implies that $u$ has at most linear growth,
therefore $\cU(\mu)$ in Theorem~\ref{th:affine} is well-defined.

Next we prove a key result concerning bounded functions,
generalizing the argument in \cite{Ke-obata} for $\RCD(K,N)$-spaces
to $\RCD(K,\infty)$-spaces.
This will be applied to approximations of the unbounded eigenfunction $u$.

\begin{theorem}[Hessian bound implies convexity]\label{th:kappa}
Let $(X,d,\fm)$ be an $\RCD(K,\infty)$-space with $K \in \R$ and $v \in \bbD_{\infty}(X)$
satisfy $\|v\|_{L^{\infty}} \le C<\infty$ and $\Hess v \ge -\kappa$ for some
$\kappa \in \R$.
Then the function $\cV(\mu):=\int_X v \,d\mu$ is $(-\kappa)$-convex on $\cP^2_{\ac}(X)$
in the sense that
\[ \cV(\mu_t) \le (1-t)\cV(\mu_0) +t\cV(\mu_1) +\frac{\kappa}{2}(1-t)t W_2^2(\mu_0,\mu_1) \]
for all $t \in [0,1]$ along any $W_2$-geodesic $(\mu_t)_{t \in [0,1]} \subset \cP^2(X)$
with $\mu_0,\mu_1 \in \cP^2_{\ac}(X)$.
\end{theorem}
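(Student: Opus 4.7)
The strategy is to promote the pointwise Hessian bound $\Hess v \ge -\kappa$, encoded via the self-improved Bochner inequality \eqref{eq:Boch3} applied to $v$, into a displacement-convexity statement for the lift $\cV$. Given $\mu_0,\mu_1 \in \cP^2_\ac(X)$, the $\RCD(K,\infty)$-condition provides a unique $W_2$-geodesic $(\mu_t)_{t\in[0,1]}$ between them, with $\mu_t \in \cP^2_\ac(X)$ for all $t$. My plan is to bound the second time derivative of $t\mapsto\cV(\mu_t)$ from below by $-\kappa W_2^2(\mu_0,\mu_1)$, which then gives $(-\kappa)$-convexity by elementary comparison (the function $t\mapsto \cV(\mu_t)-\tfrac{\kappa}{2}(1-t)tW_2^2$ would be convex on $[0,1]$ with matching endpoint values).

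To perform the calculus I would first regularize by replacing $\mu_0,\mu_1$ with $\sH_\ve\mu_0,\sH_\ve\mu_1$, for which densities become bounded and Lipschitz, and the $W_2$-contraction \eqref{eq:Wcont} gives convergence back to $\mu_0,\mu_1$ as $\ve\to 0$. Since $v$ is bounded, $\cV$ is $W_2$-continuous, so the inequality transfers in the limit. For the regularized geodesic one has a weak continuity equation $\partial_t\mu_t+\div(\mu_t\nabla\varphi_t)=0$ coupled with the Hamilton--Jacobi equation for $\varphi_t$, where $\int|\nabla\varphi_t|^2\,d\mu_t=W_2^2$ by optimality. Testing the continuity equation against the Lipschitz function $v$ should yield
\[
\frac{d}{dt}\cV(\mu_t)=\int_X \langle\nabla v,\nabla\varphi_t\rangle\,d\mu_t,
\]
and a formal second differentiation together with Hamilton--Jacobi and integration by parts should give
\[
\frac{d^2}{dt^2}\cV(\mu_t)=\int_X H[v](\varphi_t,\varphi_t)\,d\mu_t.
\]
The tensorial identification $H[v](\varphi_t,\varphi_t)=\Hess v(\nabla\varphi_t,\nabla\varphi_t)\ge -\kappa|\nabla\varphi_t|^2$ $\fm$-a.e., guaranteed by the hypothesis $\Hess v \ge -\kappa$, then delivers the bound $-\kappa W_2^2$ on the right-hand side.

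The main obstacle is making this second-derivative identity rigorous. As the authors emphasize, the Gigli--Tamanini second differentiation formula requires finite dimensionality and is therefore not available here, so one cannot read $\frac{d^2}{dt^2}\cV(\mu_t)$ off directly. I expect the actual argument to proceed via an integrated version: insert $f=v$ into the $\Gamma_2'$-identity \eqref{eq:Gamma2'} tested against nonnegative functions built from the densities $\rho_t$ of $\mu_t$ (or their heat-semigroup regularizations), invoke \eqref{eq:Boch3} to bring in $|\Hess v|_{\HS}$, and integrate in $t$ against a smooth cutoff to recover the convexity inequality at the integrated level. The hypotheses $v\in\bbD_\infty(X)$ and $\|v\|_{L^\infty}\le C$ are essential here: they place $v$ in the natural domain of both $\Gamma_2$ and $\Gamma_2'$ and yield $|\nabla v|\in L^\infty(X)$, which provides the uniform control needed when the densities $\rho_t$ are not uniformly regular. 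Everything else in the proof is bookkeeping and passage to the limit $\ve\to 0$.
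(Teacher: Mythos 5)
Your proposal stops exactly at the point where the real work begins, and the route you sketch for closing the gap is not the one the paper uses and is unlikely to go through in infinite dimensions. You correctly identify the formal computation $\frac{d^2}{dt^2}\cV(\mu_t)=\int_X H[v](\varphi_t,\varphi_t)\,d\mu_t \ge -\kappa\int |\nabla\varphi_t|^2\,d\mu_t$, and you correctly note that a rigorous second differentiation formula along $W_2$-geodesics is precisely what is unavailable here (the paper explicitly flags that Gigli--Tamanini requires finite dimensionality). But then you only gesture at an ``integrated version'' involving the $\Gamma_2'$-identity tested against the intermediate densities $\rho_t$. In an $\RCD(K,\infty)$-space the intermediate densities of a $W_2$-geodesic do not enjoy uniform bounds or regularity (no doubling, no local compactness), and there is no straightforward way to test $\Gamma_2'(v;\cdot)$ against them or to perform the two integrations by parts in the $t$-variable that your scheme implicitly requires. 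This is not a bookkeeping issue; it is the heart of the problem.

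The paper's actual argument avoids differentiating $\cV$ along the geodesic entirely. It introduces the tilted measure $\tilde\fm := e^{-v}\fm$ and proves, via the identity $\widetilde\Gamma'_2(f;\phi)=\Gamma'_2(f;e^{-v}\phi)+\int_X H[v](f,f)\phi\,d\tilde\fm$ together with the hypothesis $\Hess v\ge -\kappa$, that $(X,d,\tilde\fm)$ is an $\RCD(K-\kappa,\infty)$-space (Proposition~\ref{pr:s->0}). This yields strong $(K-\kappa)$-geodesic convexity of $\Ent_{\tilde\fm}=\Ent_\fm+\cV$ along $W_2$-geodesics. It then applies a scaling trick: rescaling the distance to $\alpha^{-1}d$ and replacing $v$ by $v/\alpha^2$ gives strong $(\alpha^2K-\kappa)$-convexity of $\Ent_{\fm}(\mu)+\alpha^{-2}\cV(\mu)$; multiplying the resulting geodesic-convexity inequality by $\alpha^2$ and sending $\alpha\to 0$ makes the entropy term drop out and isolates the $(-\kappa)$-convexity of $\cV$. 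Neither the tilted measure nor the scaling argument appears in your proposal, and they are what make the proof work without a second differentiation formula. Your first-derivative identity $\frac{d}{dt}\cV(\mu_t)=\int\langle\nabla v,\nabla\varphi_t\rangle\,d\mu_t$ is fine and is essentially a one-sided derivative obtained from the Kantorovich duality (and is used elsewhere in the paper, e.g.\ in Lemma~\ref{lem:w-EVI}), but it does not feed into a usable second-order estimate by itself.
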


The condition $\Hess v \ge -\kappa$ means that 
{$H[v](\phi,\phi) \ge -\kappa|\nabla \phi|^2$}
$\fm$-almost everywhere for all $\phi \in \bbD_{\infty}(X)$ 
(Recall the relation between $H[v]$ and $\Hess v$ reviewed in the last section).
In order to prove this theorem, similarly to \cite{Ke-obata,St-gf},
we introduce the modified measure
\begin{equation}\label{eq:tildeu}
\tilde{\fm} :=\e^{-v} \fm
\end{equation}
and consider the space $(X,d,\tilde{\fm})$.
We will denote by $\widetilde{\Delta}${, $\widetilde{\Gamma}_2$ 
and $\widetilde{\Gamma}_2'$
the Laplacian, the $\Gamma_2$-operator and the modified $\Gamma_2$-operator as  
in \eqref{eq:Gamma2'}} with respect to $\tilde{\fm}$.
We easily observe that, for $p \in [1,\infty]$,
\[ \e^{-C/p} \|f\|_{L^p({\fm})} \le \|f\|_{L^p(\tilde{\fm})} \le \e^{C/p} \|f\|_{L^p(\fm)} \]
for all $f \in L^p(X,\fm)=L^p(X,\tilde{\fm})$, and
\[ \e^{-C/p} \big\| |\nabla f| \big\|_{L^p({\fm})} \le \big\| |\nabla f| \big\|_{L^p(\tilde{\fm})}
 \le \e^{C/p} \big\| |\nabla f| \big\|_{L^p(\fm)} \]
for all {$f \in W^{1,2}(X,\fm)=W^{1,2}(X,\tilde{\fm})$. 
In addition, the minimal weak upper gradient of $f \in W^{1,2} (X, \tilde{\fm})$ 
induced by $\tilde{\fm}$ coincides with $|\nabla f|$} 
(see \cite[Lemma~4.11]{AGS-hf}). 
We moreover observe the following.

\begin{lemma}\label{importantlemma2}
Consider $v$ and $(X,d,\tilde{\fm})$ as above.
Then we have $D(\widetilde{\Delta})=D(\Delta)$ and, for any $f \in D(\widetilde{\Delta})$,
\begin{enumerate}[{\rm (i)}]
\item $\widetilde{\Delta}f =\Delta f- \langle \nabla v,\nabla f \rangle$,

\item $\|\widetilde{\Delta} f\|_{L^2(\tilde{\fm})}^2
 \le 2\e^{C/2} \left( \| \Delta f \|_{L^2({\fm})}^2
 +\| {| \nabla v |} \|_{L^{\infty}}^2 \big\| |\nabla f| \big\|_{L^2({\fm})}^2 \right)$,

\item $\| \Delta f \|_{L^2({\fm})}^2
 \le 2\e^{C/2} \left( \|\widetilde{\Delta} f\|_{L^2(\tilde{\fm})}^2
 +\| {| \nabla v |} \|_{L^{\infty}}^2 \big\| |\nabla f| \big\|_{L^2(\tilde{\fm})}^2 \right)$.
\end{enumerate}
In particular, if $f\in D(\widetilde{\Delta})$,
then $\sH_t f \in D(\widetilde{\Delta})$ and $\sH_t f \to f$ in $D(\widetilde{\Delta})$ as $t \to 0$.
\end{lemma}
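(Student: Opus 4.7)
The plan is to derive everything from a single integration-by-parts identity and then leverage it both to identify the two domains and to compare the graph norms. The enabling technical fact is that, since $v \in \bbD_{\infty}(X)$ satisfies $\|v\|_{L^{\infty}} \le C$ and $|\nabla v| \in L^{\infty}(X)$, the density $\e^{-v}$ is bounded above and below by positive constants and is a bounded Lipschitz function (with $\e^{\pm v}$ enjoying the analogous property). Hence $\fm$ and $\tilde{\fm}$ are mutually equivalent with bounded densities, $L^p(\fm)=L^p(\tilde{\fm})$ with the comparability already recorded before the lemma, and the Sobolev spaces $W^{1,2}(X,\fm)$ and $W^{1,2}(X,\tilde{\fm})$ coincide with identical minimal weak upper gradients. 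The crucial consequence is that multiplication by $\e^{\pm v}$ preserves $W^{1,2}(X)$, with the Leibniz rule $\nabla(\phi\e^{\pm v})=\e^{\pm v}(\nabla\phi\pm\phi\nabla v)$ for every $\phi\in W^{1,2}(X)$.

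For the identification $D(\Delta)=D(\widetilde{\Delta})$ and formula (i), I would take $f\in D(\Delta)$ and test against an arbitrary (bounded) $\phi\in W^{1,2}(X)$. Expanding via the Leibniz rule,
\[
\int_X \langle \nabla f,\nabla\phi\rangle\,d\tilde{\fm}
= \int_X \langle \nabla f,\nabla(\phi\e^{-v})\rangle\,d\fm
 +\int_X \phi\,\langle \nabla f,\nabla v\rangle\,d\tilde{\fm}.
\]
Since $\phi\e^{-v}\in W^{1,2}(X)$ and $f\in D(\Delta)$, the first integral on the right equals $-\int_X \phi\e^{-v}\Delta f\,d\fm=-\int_X \phi\,\Delta f\,d\tilde{\fm}$. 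Rearranging,
\[
\int_X \langle \nabla f,\nabla\phi\rangle\,d\tilde{\fm}
= -\int_X \phi\bigl(\Delta f-\langle \nabla v,\nabla f\rangle\bigr)\,d\tilde{\fm};
\]
the right-hand side is a continuous $L^2(\tilde{\fm})$-functional of $\phi$, because $\Delta f\in L^2(\tilde{\fm})$ by the equivalence of measures and $\langle \nabla v,\nabla f\rangle\in L^2(\tilde{\fm})$ follows from $|\nabla v|\in L^\infty$ and $|\nabla f|\in L^2$. This places $f$ in $D(\widetilde{\Delta})$ and gives (i). The reverse inclusion is obtained by running the identical computation with $\fm$ and $\tilde{\fm}$ interchanged: one tests against $\phi\e^v$ for $\phi\in W^{1,2}(X)$, uses $\nabla(\phi\e^v)=\e^v(\nabla\phi+\phi\nabla v)$, and reads off $\Delta f=\widetilde{\Delta}f+\langle \nabla v,\nabla f\rangle$, which is the same identity as (i).

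The estimates (ii) and (iii) are then immediate consequences of (i): pointwise $|\widetilde{\Delta}f|^2\le 2|\Delta f|^2+2\||\nabla v|\|_{L^\infty}^2|\nabla f|^2$ (and symmetrically for $|\Delta f|^2$), and the uniform bound $\e^{-C}\le\e^{-v}\le\e^{C}$ lets one pass between $\|\cdot\|_{L^2(\fm)}$ and $\|\cdot\|_{L^2(\tilde{\fm})}$ at the cost of a constant depending only on $C$. For the semigroup assertion, recall that the $\fm$-heat semigroup $(\sH_t)$ preserves $D(\Delta)$ and that $\sH_t f\to f$ in the graph norm of $\Delta$ for every $f\in D(\Delta)$. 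Because $D(\Delta)=D(\widetilde{\Delta})$ as sets and the two graph norms are equivalent (by (ii), (iii), together with $W^{1,2}(\fm)=W^{1,2}(\tilde{\fm})$), the convergence transfers to the graph norm of $\widetilde{\Delta}$. The one genuinely technical point to be justified carefully is the Leibniz rule with the bounded Lipschitz multiplier $\e^{\pm v}$ on an $\RCD$-space; once this is accepted, the lemma reduces to the elementary manipulations above.
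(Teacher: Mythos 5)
Your proposal is correct and follows essentially the same route as the paper: both proofs identify $D(\Delta)$ with $D(\widetilde{\Delta})$ by testing the Dirichlet form against functions of the form $\phi\e^{\mp v}$, invoking the Leibniz rule for the bounded Lipschitz multiplier $\e^{\pm v}$, and integrating by parts; the only cosmetic difference is that you start from $f\in D(\Delta)$ and conclude $f\in D(\widetilde{\Delta})$, while the paper runs the computation in the opposite direction first, but both treat the reverse inclusion symmetrically. Your handling of (ii), (iii) and the semigroup claim via the pointwise inequality, norm equivalence, and graph-norm comparison also matches the paper's one-line remark that these follow easily from (i).
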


\begin{proof}
Consider $f \in D(\widetilde{\Delta}) \subset W^{1,2}({X}, {\tilde\fm})$,
then $\widetilde{\Delta}f+\langle \nabla f,\nabla v\rangle\in L^2({X},\tilde{\fm})=L^2({X},\fm)$.
Given $g\in W^{1,2}({X},\fm)$, we have $\tilde{g}:=\e^v g\in W^{1,2}({X},\fm) =W^{1,2}({X},\tilde{\fm})$ and
\begin{align*}
\int_X \big( \widetilde{\Delta} f +\langle \nabla v,\nabla f \rangle \big) g \,d\fm
&= \int_X \tilde{g} \widetilde{\Delta} f \,d\tilde{\fm}+\int_X \langle \nabla v,\nabla f \rangle g \,d\fm\\
&= -\int_X \langle \nabla \tilde{g},\nabla f \rangle \,d\tilde{\fm}+\int_X \langle \nabla v,\nabla f \rangle g \,d\fm\\
&= -\int_X \left\{\langle \nabla g,\nabla f\rangle +\langle \nabla v,\nabla f \rangle g\right\} \,d\fm+\int_X \langle \nabla v,\nabla f \rangle g \,d\fm\\
&= -\int_X \langle \nabla g,\nabla f \rangle \,d\fm.
\end{align*}
This shows $f \in D(\Delta)$ and the equation in (i).
Similarly, $f\in D(\Delta)$ implies $f\in D(\widetilde{\Delta})$ (hence $D(\Delta)=D(\widetilde{\Delta})$)
and the equation in (i).
(ii) and (iii) follow easily from (i).
$\qedd$
\end{proof}

Now, we pick 
$f \in \mathbb{D}_{\infty}(X)$ 
and $\phi \in D_{L^\infty} (\Delta) \cap W^{1,2,(\infty)} (X)$.
Then 
\begin{align}
\widetilde{\Gamma}'_2(f; \phi) \nonumber
&=\frac{1}{2}\int_X |\nabla f|^2 \widetilde{\Delta}\phi \,d\tilde{\fm}
 +\int_X (\widetilde{\Delta} f)^2 \phi \,d\tilde{\fm}
 +\int_X \langle \nabla \phi,\nabla f \rangle \widetilde{\Delta} f \,d\tilde{\fm}
 \nonumber\\
&=: \text{(I)} +\text{(II)} +\text{(III)}
 \label{eq:123}
\end{align}
is well-defined. 

\begin{proposition}\label{pr:tildeG}
Let $v$ and $(X,d,\tilde{\fm})$ be as in \eqref{eq:tildeu}.
Then, for $f \in \mathbb{D}_{\infty}(X)$ 
and $\phi \in D_{L^\infty} (\Delta) \cap W^{1,2,(\infty)} (X)$ 
with $\phi \ge 0$,  
we have 
\begin{equation} \label{eq:approximation0}
\widetilde{\Gamma}'_2(f;\phi)
\ge 
(K-\kappa) \int_X |\nabla f|^2 \phi \,d\tilde{\fm}. \
\end{equation}
\end{proposition}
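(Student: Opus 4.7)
The strategy is the Bakry--Émery change-of-measure identity
$$
\widetilde{\Gamma}_2'(f;\phi) \;=\; \Gamma_2'(f;\phi e^{-v}) + \int_X H[v](f,f)\,\phi e^{-v}\,d\fm,
$$
combined with the improved Bochner inequality \eqref{eq:Boch3} on $(X,d,\fm)$ applied to $f$ with test function $\phi e^{-v}$. Once the identity is in place, \eqref{eq:Boch3} bounds $\Gamma_2'(f;\phi e^{-v})$ from below by $K\int_X|\nabla f|^2\phi\,d\tilde{\fm}$ after discarding the nonnegative $|\Hess f|_{\HS}^2$-term, while the hypothesis $\Hess v \ge -\kappa$ together with the tensorial identification $H[v](f,f)=\Hess v(\nabla f,\nabla f)$ $\fm$-a.e.\ (recalled in Section~\ref{sc:prel}) yields $H[v](f,f) \ge -\kappa|\nabla f|^2$ $\fm$-a.e. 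Adding the two lower bounds gives \eqref{eq:approximation0}.

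To derive the identity I would substitute Lemma~\ref{importantlemma2}(i), namely $\widetilde{\Delta}\phi = \Delta\phi - \langle\nabla v,\nabla\phi\rangle$ and $\widetilde{\Delta}f = \Delta f - \langle\nabla v,\nabla f\rangle$, into each of the three summands (I), (II), (III) of \eqref{eq:123} and rewrite the integrals against $d\fm$ via $d\tilde{\fm} = e^{-v}d\fm$. The resulting terms split naturally into those not containing $\nabla v$, which assemble exactly into $\Gamma_2'(f;\phi e^{-v})$, and cross terms containing $\nabla v$. Integration by parts on $(X,d,\fm)$, applicable thanks to the $L^\infty$-bounds on $v$, $|\nabla v|$, $\phi$, $|\nabla\phi|$, and on $f \in \mathbb{D}_\infty(X)$, rearranges those cross terms into $\int_X H[v](f,f)\phi e^{-v}\,d\fm$ via the definition of $H[v]$; this is the standard Bakry--Émery calculation at the level of the $\Gamma_2$-operator.

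The main obstacle is that $\phi e^{-v}$ need not itself qualify as a test function for \eqref{eq:Boch3}. Indeed, the Leibniz rule formally gives
$$
\Delta(\phi e^{-v}) = e^{-v}\Delta\phi - 2e^{-v}\langle\nabla\phi,\nabla v\rangle + \phi e^{-v}\bigl(|\nabla v|^2 - \Delta v\bigr),
$$
which lies only in $L^2(X)$, not in $L^\infty(X)$, since $\Delta v \in W^{1,2}(X)$ may be unbounded. I would bypass this by approximation: replace $\phi e^{-v}$ by its heat-flow regularization $\sH_s(\phi e^{-v})$, which belongs to $\mathbb{D}_\infty(X)$ by Proposition~\ref{pr:Lip} together with standard $L^\infty$-to-$\mathbb{D}_\infty$ smoothing, apply \eqref{eq:Boch3} with this admissible test function, and pass $s\downarrow 0$ using the $L^2$- and $D(\Delta)$-convergence $\sH_s(\phi e^{-v}) \to \phi e^{-v}$ (cf.\ Lemma~\ref{importantlemma2}) together with the $L^\infty$-bounds on $|\nabla f|$ and the $L^2$-bound on $\Delta f$. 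A parallel approximation -- regularizing $v$ via $\sH_s v$ on the integrand side -- then allows one to commute the limit with both the integration by parts establishing the identity and the resulting inequality. The technical heart of the proof is precisely this compatibility of the two approximation procedures with the algebraic identity and the Bochner bound in the limit.
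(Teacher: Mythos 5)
Your core strategy is exactly the paper's: the Bakry--\'Emery change-of-measure identity
$\widetilde{\Gamma}_2'(f;\phi)=\Gamma_2'(f;\phi\e^{-v})+\int_X H[v](f,f)\,\phi\,d\tilde{\fm}$,
proved by expanding (I), (II), (III) via Lemma~\ref{importantlemma2}(i) and integrating by parts, combined with the pointwise bound $H[v](f,f)\ge -\kappa|\nabla f|^2$ coming from $\Hess v\ge-\kappa$ and the identification of $H[v]$ with $\Hess v$. So the algebraic heart of your argument is correct.

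The gap lies in how you justify $\Gamma_2'(f;\phi\e^{-v})\ge K\int_X|\nabla f|^2\e^{-v}\phi\,d\fm$. You are right that $\phi\e^{-v}$ fails to be in $D_{L^\infty}(\Delta)$ (because $\Delta v$ is only $W^{1,2}$, not $L^\infty$), but the proposed fix, taking $\sH_s(\phi\e^{-v})$, does not repair this. The admissible test functions for \eqref{eq:Boch3} are those with $\Delta\phi\in L^\infty(X)$; for $s>0$ one has $\Delta\sH_s(\phi\e^{-v})=\sH_s\bigl(\Delta(\phi\e^{-v})\bigr)$, and since $\Delta(\phi\e^{-v})\in L^2(X)$ only, this stays in $L^2(X)$ but need not land in $L^\infty(X)$: on an $\RCD(K,\infty)$-space without doubling the heat semigroup is not ultracontractive, so $\sH_s$ does not improve $L^2$ to $L^\infty$. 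Your claim that $\sH_s(\phi\e^{-v})\in\bbD_\infty(X)$ is true but beside the point, because $\bbD_\infty(X)$ (which requires $\Delta f\in W^{1,2}$) is not the test-function class for \eqref{eq:Boch3}, which is $D_{L^\infty}(\Delta)\cap L^\infty(X)$ (requiring $\Delta\phi\in L^\infty$); neither class contains the other. A mollifier that would actually work for this purpose is the time-averaged $\mathfrak{h}_\varepsilon$ introduced just after the proposition, since $\mathfrak{h}_\varepsilon$ maps $L^\infty(X)$ into $D_{L^\infty}(\Delta)$, unlike $\sH_s$.

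The paper sidesteps the whole approximation. It simply observes that the expression $\Gamma_2'(f;\phi\e^{-v})$ is well-defined as it stands for $f\in\bbD_\infty(X)$ and $\phi\e^{-v}\in D(\Delta)\cap L^\infty(X)$: the potentially only-$L^2$ factor $\Delta(\phi\e^{-v})$ is integrated against $|\nabla f|^2\in L^\infty$, and all other terms are clearly finite. It then appeals to \cite[Corollary~4.3]{ams}, which establishes the Bochner inequality for $\Gamma_2'$ directly on that larger class of test functions, rather than trying to reduce to \eqref{eq:Boch3}. This is a simpler and cleaner route, and it also makes your final paragraph about regularizing $v$ by $\sH_s v$ unnecessary (and that step would in any case have its own issue, since the heat flow need not preserve a lower Hessian bound for $v$).
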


\begin{proof}
Observe first that $\e^{-v} \in D(\Delta)$ from
\[ \Delta(\e^{-v}) =-\e^{-v} \Delta v +\e^{-v} |\nabla v|^2 \in L^2(X). \]
{Moreover $\e^{-v} \phi \in D(\Delta)$ 
and we have 
\[
\Delta ( \e^{-v} \phi ) 
= 
\phi \Delta ( \e^{-v} ) 
-2 \langle \nabla v , \nabla \phi \rangle \e^{-v} 
+ \e^{-v} \Delta \phi
\]
as expected (see \cite[Theorem~4.29]{Gi-Ondiff}).}
We shall compute (I), (II) and (III) in \eqref{eq:123} in order,
and then compare 
{%
$\widetilde{\Gamma}'_2(f;\phi)$ with $\Gamma'_2(f;\e^{-v} \phi)$.}
Let us begin with
\begin{align*}
&2\text{(I)}
 =\int_X |\nabla f|^2 \e^{-v} \Delta \phi \,d\fm
 -\int_X |\nabla f|^2 \langle \nabla \phi,\nabla v\rangle \e^{-v} \,d\fm \\
&= \int_X |\nabla f|^2 \Delta (\e^{-v} \phi) \,d\fm
 -\int_X |\nabla f|^2 (\Delta \e^{-v}) \phi \,d\fm
 +\int_X |\nabla f|^2 \langle \nabla \phi,\nabla v\rangle \e^{-v} \,d\fm \\
&= \int_X |\nabla f|^2 \Delta (\e^{-v} \phi )\,d\fm
 +\int_X \langle\nabla (|\nabla f|^2 \phi),\nabla \e^{-v} \rangle \,d\fm
 +\int_X |\nabla f|^2 \langle \nabla \phi,\nabla v \rangle \e^{-v} \,d\fm \\
&= \int_X |\nabla f|^2 \Delta (\e^{-v} \phi) \,d\fm
 -\int_X \langle \nabla (|\nabla f|^2),\nabla v \rangle \e^{-v} \phi \,d\fm.
\end{align*}
Here, the first equality follows from Lemma~\ref{importantlemma2}(i),
the second equality is the Leibniz rule for $\Delta$,
the third equality is the integration by parts,
and the fourth equality is the Leibniz rule for $\nabla$,
where we note again that $|\nabla f|^2 \in W^{1,2}(X)$ since $f \in \bbD_{\infty}(X)$.

Next we have, again by Lemma~\ref{importantlemma2}(i),
the integration by parts and the Leibniz rule for $\nabla$,
\begin{align*}
&\text{(II)}
 = \int_X (\Delta f)^2 \phi \,d\tilde{\fm}
 +\int_X \langle \nabla f,\nabla v\rangle^2 \phi \,d\tilde{\fm}
 -2\int_X \Delta f \langle \nabla f,\nabla v\rangle \phi \,d\tilde{\fm} \\
&= \int_X (\Delta f)^2 \phi \,d\tilde{\fm}
 +\int_X \langle \nabla f,\nabla v\rangle^2 \phi \,d\tilde{\fm}
 +2\int_X \big\langle \nabla f,
 \nabla \big( \langle \nabla f,\nabla v\rangle \e^{-v} \phi \big) \big\rangle \,d\fm \\
&= \int_X (\Delta f)^2 \phi \,d\tilde{\fm}
 -\int_X \langle \nabla f,\nabla v\rangle^2 \phi \,d\tilde{\fm} \\
&\quad +2\int_X \big\langle \nabla f,\nabla \langle \nabla f,\nabla v\rangle \big\rangle
 \e^{-v} \phi \,d\fm
 +2\int_X \langle \nabla f,\nabla \phi \rangle
 \langle \nabla f,\nabla v\rangle \e^{-v} \,d\fm.
\end{align*}
Finally,
\begin{align*}
&\text{(III)}
 = \int_X \langle\nabla \phi,\nabla f\rangle \e^{-v} \Delta f \,d\fm
 -\int_X \langle\nabla \phi,\nabla f\rangle
 \langle \nabla v,\nabla f \rangle \e^{-v} \,d\fm \\
&= \int_X \langle\nabla (\e^{-v} \phi),\nabla f \rangle \Delta f \,d\fm
 +\int_X \langle\nabla v,\nabla f\rangle \e^{-v} \phi \Delta f \,d\fm \\
&\quad - \int_X \langle\nabla \phi,\nabla f\rangle
 \langle \nabla v,\nabla f \rangle \e^{-v} \,d\fm \\
&= \int_X \langle\nabla (\e^{-v} \phi),\nabla f\rangle \Delta f \,d\fm
 -\int_X \big\langle\nabla \big(\langle\nabla v,\nabla f\rangle \e^{-v} \phi \big),
 \nabla f \big\rangle \,d\fm \\
&\quad -\int_X \langle\nabla \phi,\nabla f\rangle
 \langle \nabla v,\nabla f \rangle \e^{-v} \,d\fm \\
&= \int_X \langle\nabla (\e^{-v} \phi),\nabla f\rangle \Delta f \,d\fm
 -\int_X \big\langle \nabla \langle \nabla v,\nabla f \rangle,\nabla f \big\rangle \e^{-v} \phi \,d\fm \\
&\quad +\int_X \langle\nabla v,\nabla f \rangle^2 \e^{-v} \phi \,d\fm
 -2\int_X \langle\nabla \phi,\nabla f\rangle
 \langle \nabla v,\nabla f \rangle \e^{-v} \,d\fm.
\end{align*}

Adding (I), (II) and (III) yields
\begin{align*}
\widetilde{\Gamma}'_2(f;\phi)
&= \Gamma'_2(f;\e^{-v} \phi)
 -\frac{1}{2} \int_X \langle\nabla(|\nabla f|^2),\nabla v\rangle \phi \,d\tilde{\fm}
 +\int_X \big\langle \nabla f,\nabla \langle \nabla f,\nabla v \rangle \big\rangle \phi \,d\tilde{\fm} \\
&= \Gamma'_2(f;\e^{-v} \phi)
 +\int_X {H[v](f,f)} \phi \,d\tilde{\fm}.
\end{align*}
Notice that $\Gamma_2'(f;\e^{-v} \phi)$ is well-defined
for $f \in \bbD_\infty (X)$ and $\e^{-v} \phi \in D(\Delta) \cap L^{\infty}(X)$ 
since $|\nabla f|, | \nabla \phi | \in L^\infty (X)$, and we have
\[ \Gamma_2'(f;\e^{-v} \phi)
 \ge K \int_X |\nabla f|^2 \e^{-v} \phi \,d\fm
 =K\int_X |\nabla f|^2 \phi \,d\tilde{\fm} \]
by $\RCD(K,\infty)$ {condition} (see \cite[Corollary~4.3]{ams}).
Now we apply the hypothesis $\Hess v \ge -\kappa$ to conclude
\[ \widetilde{\Gamma}'_2(f;\phi) \ge (K-\kappa) \int_X |\nabla f|^2 \phi \,d\tilde{\fm} \]
as desired.
$\qedd$
\end{proof}

We shall extend the class of functions $f$ and $\phi$ 
in the last proposition.
For this purpose, we introduce a mollification $\mathfrak{h}_\varepsilon$ 
given by 
\[
\mathfrak{h}_\varepsilon f 
:= 
\int_0^\infty 
  \frac{1}{\varepsilon} 
  \eta \left( \frac{t}{\varepsilon} \right)
  \sH_t f 
\, d t , 
\]
where $\eta \in C^\infty_c ( (0 , \infty ))$ with $\eta \ge 0$ 
and $\int_0^\infty \eta (t) \, d t = 1$ and $f \in L^2 (X)$. 
We can easily see that $\mathfrak{h}_\varepsilon f \in \bbD_\infty (X)$ 
and moreover $\mathfrak{h}_\varepsilon f \in D_{L^\infty} (\Delta)$ 
if $f \in L^2 (X) \cap L^\infty (X)$. 
As $\varepsilon \to 0$, 
$\mathfrak{h}_\varepsilon f \to f$ occurs 
both in $W^{1,2} (X)$ and in $D (\Delta)$. 
We also consider another mollification $\tilde{\mathfrak{h}}_\varepsilon$ 
by using the heat semigroup $( \tilde{\sH}_t )_{t \ge 0}$ 
associated with $\tilde{\Delta}$ on $L^2 (\tilde{\fm})$ 
instead of $( \sH_t )_{t \ge 0}$.



\begin{proposition}\label{pr:s->0}
$(X,d,\tilde{\fm})$ satisfies $\RCD(K-\kappa,\infty)$.
\end{proposition}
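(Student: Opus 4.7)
The plan is to deduce $\RCD(K-\kappa,\infty)$ by checking its two defining features on $(X,d,\tilde{\fm})$: infinitesimal Hilbertianity and a curvature-dimension type condition with parameter $K-\kappa$. Rather than verifying the $K$-convexity of the entropy directly along Wasserstein geodesics with respect to $\tilde{\fm}$---which would require a new transport argument---I would work on the Bakry--\'Emery side and invoke the characterization of \cite{AGS-be} (see also \cite[Theorem~4.1]{Sa}): infinitesimal Hilbertianity together with the integrated weak Bochner inequality on a sufficiently rich test class is equivalent to the $\RCD(K-\kappa,\infty)$-condition.

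Infinitesimal Hilbertianity is almost automatic. Since $\|v\|_{L^\infty}\le C$ gives $\e^{-C}\le\e^{-v}\le \e^{C}$, the Sobolev spaces $W^{1,2}(X,\fm)$ and $W^{1,2}(X,\tilde{\fm})$ agree as sets, the Sobolev norms are equivalent, and (as recalled just before Lemma~\ref{importantlemma2}) the minimal weak upper gradient is the same function $|\nabla f|$ under either measure. Consequently the new Cheeger energy equals $\tfrac12\int_X|\nabla f|^2\,d\tilde{\fm}$, which is quadratic by polarization once $(X,d,\fm)$ is itself infinitesimally Hilbertian.

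The substantive work lies in the integrated Bochner inequality. Proposition~\ref{pr:tildeG} already supplies
\[
\widetilde{\Gamma}'_2(f;\phi)\ge (K-\kappa)\int_X|\nabla f|^2\phi\,d\tilde{\fm}
\]
for $f\in\bbD_\infty(X)$ and nonnegative $\phi\in D_{L^\infty}(\Delta)\cap W^{1,2,(\infty)}(X)$. To feed the characterization theorem, I would extend this to the analogous classes built on $\tilde{\Delta}$: namely, for $f\in D_{W^{1,2}(\tilde{\fm})}(\tilde{\Delta})\cap W^{1,2,(\infty)}(X)$ and nonnegative $\phi\in D_{L^\infty}(\tilde{\Delta})\cap L^\infty(X)$, approximate both by the mollifications $\tilde{\mathfrak{h}}_\varepsilon f$ and $\tilde{\mathfrak{h}}_\varepsilon\phi$ introduced just before the proposition. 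These mollifications land in $\bbD_\infty(X)$ and in the required test class, and every term of $\widetilde{\Gamma}'_2(\cdot;\cdot)$ passes to the limit as $\varepsilon\to 0$ thanks to Lemma~\ref{importantlemma2}(ii)--(iii) and the uniform $L^\infty$-bounds on $v$ and $|\nabla v|$; the right-hand side is controlled by Fatou's lemma.

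The main obstacle is the clash between the two regularizations at play. Proposition~\ref{pr:tildeG} is stated using $\Delta$ and classes built with respect to $\fm$, whereas the characterization theorem wants the inequality on test objects adapted to $\tilde{\Delta}$ and $\tilde{\fm}$. Lemma~\ref{importantlemma2} is precisely the bridge: it gives $D(\Delta)=D(\tilde{\Delta})$ together with two-sided $L^2$-bounds of one Laplacian in terms of the other, with constants depending only on $C$ and $\|{|\nabla v|}\|_{L^\infty}$. This allows $\tilde{\mathfrak{h}}_\varepsilon$ to play, uniformly in $\varepsilon$, the same regularizing role for $(X,d,\tilde{\fm})$ that $\mathfrak{h}_\varepsilon$ plays for $(X,d,\fm)$. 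Once the approximation is carried out, the cited characterization result converts the extended weak Bochner inequality into the claimed $\RCD(K-\kappa,\infty)$-condition.
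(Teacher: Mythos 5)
Your overall direction is correct: both you and the paper reduce the claim, via the Bakry--\'Emery characterization of $\RCD$ (Corollary~4.18(ii) in \cite{AGS-be}), to extending the weak Bochner inequality of Proposition~\ref{pr:tildeG} to the classes $f\in D_{W^{1,2}}(\widetilde{\Delta})$ and nonnegative $\phi\in D_{L^\infty}(\widetilde{\Delta})\cap L^\infty(X)$, and infinitesimal Hilbertianity is indeed automatic from the equivalence of the Sobolev structures. However, the approximation scheme you propose contains a genuine gap.

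You mollify $f$ with $\tilde{\mathfrak h}_\varepsilon$ (the regularization built from the $\tilde\fm$-heat semigroup $\tilde\sH_t$) and assert that ``these mollifications land in $\bbD_\infty(X)$.'' That is exactly what cannot be justified at this stage. Membership in $\bbD_\infty(X)$ requires $|\nabla(\tilde{\mathfrak h}_\varepsilon f)|\in L^\infty(X)$ and $\Delta(\tilde{\mathfrak h}_\varepsilon f)\in W^{1,2}(X)$. The $L^\infty$-to-Lipschitz regularization (Proposition~\ref{pr:Lip}) is available for $\sH_t$ precisely because $(X,d,\fm)$ is $\RCD(K,\infty)$; the analogous property for $\tilde\sH_t$ would require a Bakry--\'Emery gradient estimate on $(X,d,\tilde\fm)$, which is essentially the statement being proved. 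The paper is explicit about this point (``the regularization property as Proposition~\ref{pr:Lip} is available for $\mathfrak h_\varepsilon$ but not for $\tilde{\mathfrak h}_\varepsilon$''). Lemma~\ref{importantlemma2}(ii)--(iii), which you invoke as the bridge, controls only $L^2$-norms of the two Laplacians against each other; it says nothing about preserving $L^\infty$ gradient bounds under $\tilde\sH_t$, nor does it put $\Delta(\tilde{\mathfrak h}_\varepsilon f)$ (as opposed to $\widetilde{\Delta}(\tilde{\mathfrak h}_\varepsilon f)$) in $W^{1,2}(X)$.

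The paper circumvents this by regularizing $f$ with $\mathfrak h_\varepsilon$ (the $\fm$-heat mollification) in the first step, where Proposition~\ref{pr:Lip} supplies the needed Lipschitz regularity and Lemma~\ref{importantlemma2} then transfers the $D(\Delta)$-convergence to $\widetilde{\Delta}$-convergence; then it separately removes the bounded-gradient assumption on $\phi$ by mollifying $\phi$ alone with $\sH_s$; and only in the third step, when passing from $L^\infty$ to general $f$, does it use $\tilde{\mathfrak h}_\varepsilon$ on a truncation $(-R)\vee f\wedge R$ --- and at that stage the inequality being propagated is the already-established \eqref{eq:tildeBE} on the class $D_{W^{1,2}}(\widetilde{\Delta})\cap L^\infty(X)$, which no longer demands $\bbD_\infty(X)$ membership and hence does not require the missing Lipschitz regularization. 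Your plan collapses these steps and applies $\tilde{\mathfrak h}_\varepsilon$ where $\bbD_\infty(X)$ membership is needed; this step fails. You also do not indicate how to extend beyond $f\in W^{1,2,(\infty)}(X)$ to the full class $D_{W^{1,2}}(\widetilde{\Delta})$ as the characterization theorem requires; a truncation argument as in the paper's last step, or as in \cite[Theorem~4.8]{EKS}, is needed.
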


\begin{proof}
First, since $(X,d,\fm)$ satisfies the condition $\RCD(K,\infty)$, we have that $$\{f\in W^{1,2}(X)\mid |\nabla f|\leq C\}\subset \{f\in \mbox{Lip}(X,d)\mid |\nabla^L f|\leq C\}$$
by the Sobolev-to-Lipschitz property. Moreover, $W^{1,2}(X,\fm)=W^{1,2}(X,\tilde{\fm})$, $\tilde{\fm}$ is locally finite, 
and one checks that $(X,d,\tilde{\fm})$ again satisfies an exponential growth condition. More precisely, the latter means that there exist constants
$M>0$ and $c>0$ such that 
$$\tilde{\fm}(B_r(x))\leq M\exp(c r^2)\ \mbox{ for every }r>0.$$
Hence, we can apply Corollary 4.18 (ii) in \cite{AGS-be} and consequently
it suffices to show the Bochner inequality
\begin{equation}\label{eq:tildeBE}
\widetilde{\Gamma}_2(f;\phi)
 \ge (K-\kappa) \int_X |\nabla f|^2 \phi \,d\tilde{\fm}
\end{equation}
for any $f \in D_{W^{1,2}}(\widetilde{\Delta})$
and $\phi \in D_{L^{\infty}}(\widetilde{\Delta}) \cap L^{\infty}(X)$ with $\phi \ge 0$.

Let us first assume in addition $f \in L^\infty (X)$ 
and $\phi \in W^{1,2,(\infty)} (X)$.
We remark that we do not know whether $\Delta f \in W^{1,2} (X)$ or not, 
while the regularization property as Proposition~\ref{pr:Lip} is available 
for $\mathfrak{h}_\varepsilon$ but not for $\tilde{\mathfrak{h}}_\varepsilon$. 
Let $f_{\varepsilon} : = \mathfrak{h}_\varepsilon f$ for $\varepsilon > 0$. 
Then $f_{\varepsilon} \in \bbD_{\infty} (X)$ and \eqref{eq:approximation0} 
holds for $f_{\varepsilon}$ and $\phi$ by Proposition~\ref{pr:tildeG}. 
Since $f_\varepsilon \to f$ in $D(\Delta)$, we have 
$\| \tilde{\Delta} f_{\varepsilon} - \tilde{\Delta} f \|_{L^2 (X,\tilde\fm)} \to 0$ 
as $\varepsilon \to 0$. 
Thus,  
we obtain \eqref{eq:approximation0} for $f$ and $\phi$
by letting $\varepsilon \to 0$. 
Since $\tilde{\Gamma}_2 ( f;  \phi ) = \tilde{\Gamma}_2' ( f;  \phi )$
under our assumption on $\phi$, the assertion \eqref{eq:tildeBE} holds. 

Next we drop the assumption 
$| \nabla \phi | \in L^\infty (X)$ in the first step. 
Since $\sH_s \phi \in W^{1,2,(\infty)} (X)$ for $s > 0$ 
by virtue of Propostion~\ref{pr:Lip}, 
\eqref{eq:tildeBE} holds for $f$ and $\sH_s \phi$.
We will let $s \to 0$ in this inequality. 
Note that \cite[Theorem~3.1]{ams} yields $| \nabla f | \in L^4 (X)$. 
Since $\sH_s \phi \to \phi$ in $D(\Delta)$ as $s \to 0$, 
we have 
$\| \tilde{\Delta} \sH_s \phi - \tilde{\Delta} \phi \|_{L^2 (X,\tilde{\fm})} \to 0$. 
Moreover, we have $\sH_s \phi \to \phi$ 
with respect to the weak-$\star$-topology in $L^\infty (X)$. 
Thus, by virtue of 
$| \nabla f | \in L^2 (X) \cap L^4 (X)$ and 
$| \nabla \tilde{\Delta} f | \in L^2 (X)$,
we obtain \eqref{eq:tildeBE} 
for our choice of $f$ and $\phi$ by letting $s \to 0$. 
Finally we will show \eqref{eq:tildeBE} for $f \in D_{W^{1,2}} (\tilde{\Delta})$ 
and nonnegative $\phi \in D_{L^\infty} (\tilde{\Delta}) \cap L^\infty (X)$. 
For $R > 0$ and $\varepsilon > 0$, let 
$f_{\varepsilon , R} : = \tilde{\mathfrak{h}}_{\varepsilon} ( (-R) \vee f \wedge R$). 
Then $f_{\varepsilon, R} \in D_{W^{1,2}} (\tilde{\Delta}) \cap L^\infty (X)$ 
and \eqref{eq:tildeBE} holds for $f_{\varepsilon, R}$ and $\phi$. 
Then, letting $R \to \infty$ and $\varepsilon \to 0$ afterwards, 
we obtain \eqref{eq:tildeBE} for $f$ and $\phi$ 
by arguing as in the proof of \cite[Theorem~4.8]{EKS} 
(see also \cite[Theorem~4.6]{GKO}, \cite[Corollary~2.3]{AGS-be}).
$\qedd$
\end{proof}

\noindent
{\it Proof of Theorem~$\ref{th:kappa}$}.
Let $\alpha>0$ and consider the scaled space $X_{\alpha}=(X,\alpha^{-1}d)$,
$v_{\alpha}=v/\alpha^2$ and $\tilde{\fm}_{\alpha} :=\e^{-v_{\alpha}}\fm$.
By definition we find $\Hess v_{\alpha} \ge -\kappa$ on $X_{\alpha}$.
It follows from Proposition~\ref{pr:s->0} that
\[ \Ent_{\tilde{\fm}_{\alpha}}(\mu) =\Ent_{\fm}(\mu) +\int_X v_{\alpha} \,d\mu
 =\Ent_{\fm}(\mu) +\frac{\cV(\mu)}{\alpha^2} \]
is strongly $(\alpha^2 K-\kappa)$-convex on $\cP^2(X_{\alpha})$
(recall Lemma~\ref{lm:CD}(i)).
Therefore, for any $L^2$-Wasserstein geodesic $(\mu_t)_{t \in [0,1]}$ over $X_{\alpha}$
with bounded supports and densities, we have
\[ \Ent_{\tilde{\fm}_{\alpha}}(\mu_t)
 \le (1-t)\Ent_{\tilde{\fm}_{\alpha}}(\mu_0) +t\Ent_{\tilde{\fm}_{\alpha}}(\mu_1)
 +\frac{\kappa-\alpha^2 K}{2} (1-t)t \frac{W_2(\mu_0,\mu_1)^2}{\alpha^2}, \]
where $W_2$ is with respect to $d$.
The boundedness ensures the finiteness of the entropies.
Multiplying this inequality with $\alpha^2$ and letting $\alpha \to 0$, 
we see that $\cV$ is $(-\kappa)$-convex along $(\mu_t)_{t \in [0,1]}$.
Then by the truncation and cut-off arguments, 
we can show the desired claim for arbitrary $W_2$-geodesics
$(\mu_t)_{t \in [0,1]}$ with $\mu_0,\mu_1 \ll \fm$ 
(for instance, follow the proof of \cite[Theorem~4.20]{StI}).
$\qedd$
\bigskip


We are ready to prove Theorem~\ref{th:affine}.
Recall from \eqref{eq:|du|=1} that we can normalize $u$
so that $|\nabla u|=1$ $\fm$-almost everywhere,
and then $u$ is $1$-Lipschitz and $\cU$ is well-defined.
\bigskip

\noindent
{\it Proof of Theorem~$\ref{th:affine}$.} 
In order to apply Theorem~\ref{th:kappa},
we smoothly truncate $u$ by using the function $g_n:\R \lra (-n\pi/2,n\pi/2)$
given by $g_n(r)=n\arctan(r/n)$ for $n \in \N$.
Note that $g_n(r) \to r$ as $n \to \infty$ uniformly on compact sets,
and we have
\[ g'_n(r) =\frac{1}{(r/n)^2+1} \in (0,1], \qquad
 g''_n(r) =-\frac{2{n^2}r}{(r^2+n^2)^2}. \]
Set 
$\kappa_n 
:=
\sup |g_n''|
=
{3\sqrt{3}/(8n)}
$ which goes to $0$ as $n \to \infty$.

Define $v_n :=g_n\circ u$.
Then clearly $v_n \in L^{\infty}(X)$ as well as
$|\nabla v_n|=g_n'(u) \in L^{\infty}(X)$ by $|\nabla u|=1$.
Moreover, we have
\[ \Delta v_n =g_n'(u) \Delta u +g_n''(u)|\nabla u|^2
 =-g_n'(u)u +g_n''(u) \in W^{1,2} ({X}) \cap L^{\infty}(X). \]
Thereby $v_n \in \bbD_{\infty}(X)$ with $\Delta v_n \in L^{\infty}(X)$. 
In particular, we can apply the chain rule for the Hessian 
(Proposition 3.3.21 in \cite{Gi-dg}) yielding that 
\begin{multline*}
{H[v_n](\phi,\phi) =} \Hess v_n({\nabla \phi,\nabla \phi})
 =g_n'(u) \Hess u({\nabla \phi,\nabla \phi}) +g_n''(u) \langle \nabla u,\nabla \phi \rangle^2
\\
 =g_n''(u) \langle \nabla u,\nabla \phi\rangle^2
 \ge -\kappa_n |\nabla \phi|^2 
\end{multline*}
$\fm$-almost everywhere.
Hence we can apply Theorem~\ref{th:kappa} to $v_n$ and find that,
for every $W_2$-geodesic $(\mu_t)_{t \in [0,1]}$
with $\mu_0,\mu_1 \in \cP^2_{\ac}(X)$,
\begin{equation}\label{eq:kappaconvexity}
\int_X v_n \,d\mu_t \le (1-t) \int_X v_n \,d\mu_0 +t\int_X v_n \,d\mu_1
 +\frac{\kappa_n}{2} (1-t)t W_2^2(\mu_0,\mu_1).
\end{equation}
Now, we consider $(\mu_t)_{t \in [0,1]}$ such that $\mu_0$ and $\mu_1$ have bounded support.
Let $B \subset X$ be a bounded set with $\supp \mu_t \subset B$ for every $t \in [0,1]$.
By the $1$-Lipschitz continuity of $u$ and the uniform convergence of $g_n(r)$ to $r$ on bounded subsets in $\R$,
we obtain that $v_n \to u$ uniformly on $B$.
Hence taking the limit as $n \to \infty$ in \eqref{eq:kappaconvexity} yields
\[ \int_X u \,d\mu_t \le (1-t)\int_X u \,d\mu_0 +t\int_X u \,d\mu_1. \]
We can repeat the same argument for $-u$ in place of $u$, thus equality holds.
By an exhaustion of $X$ with bounded sets,
we conclude that $\cU$ is affine on whole $\cP^2_{\ac}(X)$.
$\qedd$

\section{Second step: Regular Lagrangian gradient flow of the eigenfunction}\label{sc:gf}

The eigenfunction $u$ as in the previous section will play the key role
in the same way that the Busemann function did in the splitting theorem
of $\RCD(0,N)$-spaces in \cite{Gi-split} (see also an overview \cite{Gi-ov}).
In order to overcome technical difficulties arising due to the lack of the volume doubling property,
we employ the regular Lagrangian flow of the negative gradient vector field $-\nabla u$,
and use it to construct and analyze the gradient flows of $\cU$ and then of $u$.

\subsection{Regular Lagrangian flow}\label{ssc:RLF}

We apply the theory of \emph{regular Lagrangian flows}
developed by Ambrosio--Trevisan \cite{AT}
(as a far reaching generalization of the celebrated DiPerna--Lions theory \cite{DL},
see also the lecture notes \cite{AT2}) to the vector field $-\nabla u$,
where $u$ is the eigenfunction as in the previous section.
The notion of regular Lagrangian flow is closely related with 
the continuity equation. We begin with solving the continuity 
equation of $- \nabla u$ starting from $\fm$. 

\begin{proposition}[Solution to the continuity equation]\label{pr:meas}
A solution to the continuity equation for $- \nabla u$ is 
given by $\e^{- t u - t^2 / 2}$. 
That is, for any $f \in W^{1,2} (X)$ the map $t\mapsto \int e^{-tu-t^2/2}d\fm$ is absolutely continuous and its derivative is given for a.e. $t\in \mathbb{R}$ by
\[
\frac{d}{dt} \int_X f \e^{-tu - t^2/2} \,d \fm 
= 
-\int_X \langle \nabla f, \nabla u \rangle \e^{- t u - t^2 /2 } \, d \fm.
\]
\begin{proof}
Recalling $u=-\Delta u$ and $|\nabla u|=1$ from \eqref{eq:|du|=1}, we have
\begin{align*}
\frac{d}{dt} \int_X f \e^{- t u - t^2 /2 } \, d \fm
&=
\int_X f\e^{-tu-t^2/2}(-u-t) \,d\fm
 =
\int_X f\e^{-tu-t^2/2}(\Delta u-t) \,d\fm\\
&=
\int_X \e^{-tu-t^2/2} 
( -tf-\langle \nabla f,\nabla u \rangle +f \langle t\nabla u,\nabla u \rangle )
\,d\fm
\\
&=
-\int_X \langle \nabla f,\nabla u \rangle \e^{- t u - t^2 /2 } \, d \fm .
\end{align*}
$\qedd$
\end{proof}
\end{proposition}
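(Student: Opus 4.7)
The plan is a direct calculation that only uses the two properties $\Delta u = -u$ and $|\nabla u|^2 \equiv 1$ from \eqref{eq:|du|=1}, once differentiation under the integral and a single integration by parts have been justified. I would first compute the pointwise $t$-derivative of the integrand
\[
\frac{\partial}{\partial t}\bigl( f(x)\e^{-tu(x) - t^2/2} \bigr)
= f(x)\e^{-tu(x) - t^2/2}\bigl( -u(x) - t \bigr),
\]
and then substitute $-u = \Delta u$ to convert $-u - t$ into $\Delta u - t$. This reduces the problem to showing
\[
\frac{d}{dt}\int_X f\,\e^{-tu - t^2/2}\,d\fm
= \int_X f\,\e^{-tu - t^2/2}\,\Delta u\,d\fm - t\int_X f\,\e^{-tu - t^2/2}\,d\fm,
\]
and then handling the first term by the definition of $\Delta$ via the Dirichlet form $\cE$.

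For the latter I would use that $f\,\e^{-tu-t^2/2} \in W^{1,2}(X)$: since $|\nabla u|\equiv 1$, the continuous representative of $u$ is $1$-Lipschitz, so $|u(x)| \le |u(x_0)| + d(x,x_0)$, and combined with the Gaussian decay of $\fm$ in Lemma~\ref{lm:CD}(ii) this yields finite integrals for $f\,\e^{-tu-t^2/2}$ and its gradient computed by the Leibniz rule. With this in place,
\[
\int_X f\,\e^{-tu - t^2/2}\Delta u\,d\fm
= -\int_X \bigl\langle \nabla u, \nabla\bigl( f\,\e^{-tu - t^2/2}\bigr) \bigr\rangle\,d\fm,
\]
and expanding
$\nabla(f\,\e^{-tu-t^2/2}) = \e^{-tu-t^2/2}\nabla f - tf\,\e^{-tu-t^2/2}\nabla u$
together with $|\nabla u|^2=1$ turns this into
\[
-\int_X \langle\nabla u,\nabla f\rangle \e^{-tu-t^2/2}\,d\fm
+ t\int_X f\,\e^{-tu-t^2/2}\,d\fm.
\]
The second term cancels the $-t\int f\,\e^{-tu-t^2/2}\,d\fm$, leaving precisely the claimed expression.

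To upgrade this pointwise computation to the absolute continuity statement, I would observe that the Gaussian moment control above also gives local uniform domination in $t$: on any bounded interval $t \in [a,b]$, the families $\{|f|\e^{-tu-t^2/2}(|u|+|t|)\}$ and $\{|\nabla f|\e^{-tu-t^2/2}\}$ are bounded by a single $\fm$-integrable function, which legitimates differentiation under the integral and yields absolute continuity with the derivative computed above holding for a.e.\ (in fact every) $t$. The main obstacle is therefore not algebraic but the bookkeeping behind these integrability checks, and it is handled uniformly by the $1$-Lipschitz property of $u$ together with Lemma~\ref{lm:CD}(ii); once this is set up the identity is a three-line computation.
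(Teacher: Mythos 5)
Your proposal is the same computation the paper makes: differentiate under the integral to pick up the factor $-u-t$, substitute $-u=\Delta u$, integrate by parts against $\Delta u$, and use $|\nabla u|^2\equiv 1$ so that the two $t$-dependent terms cancel; the paper's write-up is even terser and omits the justifications you sketch. One imprecision worth flagging: the claim that $f\e^{-tu-t^2/2}\in W^{1,2}(X)$ for arbitrary $f\in W^{1,2}(X)$ does not follow from the $1$-Lipschitzness of $u$ and the Gaussian decay of $\fm$ alone --- those facts give $\e^{-tu-t^2/2}\in L^2(\fm)$ and hence (by Cauchy--Schwarz) $L^1$-control of $f\e^{-tu-t^2/2}$ and of its formal gradient, but $W^{1,2}$ membership would require $\int_X f^2\e^{-2tu-t^2}\,d\fm<\infty$, which can fail if the $L^2$-mass of $f$ concentrates far from $x_0$. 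The clean route is to prove the identity first for Lipschitz $f$ of bounded support (for which $f\e^{-tu-t^2/2}$ is Lipschitz with bounded support, hence manifestly in $W^{1,2}(X)$) and then pass to the limit using the $L^1$-dominations you already set up; the paper leaves this approximation step implicit as well.
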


\begin{theorem}[Regular Lagrangian flow of $-\nabla u$]\label{th:RLF}
There exists a unique map $($up to equality almost everywhere$)$
$F:X \times \R \lra X$ such that
\begin{enumerate}[{\rm (i)}]
\item 
$(F_t)_*\fm \le C(\cdot,t) \fm$ 
for a locally bounded function $C : X \times \R \lra ( 0 , \infty );$ 

\item 
$F_0$ is the identity map and, for every $x \in X$,
$t \longmapsto F_t(x)$ is a $1$-Lipschitz curve$;$

\item 
For every $f \in W^{1,2}(X)$ and $\fm$-almost every $x$,
the map $t \longmapsto f(F_t(x))$ is in $W^{1,2}_{\loc}(\R)$
and its distributional derivative satisfies
\[ \frac{d}{dt} \big[ f\big( F_t(x) \big) \big]
 =-\langle \nabla f,\nabla u \rangle \big( F_t(x) \big); \]

\item 
For each $s \in \R$, we have $F_t \circ F_s=F_{t+s}$ for every $t \in \R$ 
$\fm$-almost everywhere$;$

\item 
For $\fm$-almost every $x$,
the metric speed of the curve $t \longmapsto F_t(x)$ is 
constant and equal to $1$.
\end{enumerate}
Moreover, $( F_t )_* \fm = \e^{-tu - t^2/2} \fm$ holds. 
Note that equality almost everywhere is understood at the level of curves. More precisely, if there is another map $\tilde{F}$ as above then for $\fm$-a.e. $x$ we have that $F_t(x)=\tilde{F}_t(x)$ for every 
$t\in \R$.
\end{theorem}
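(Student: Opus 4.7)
The strategy is to invoke the theory of regular Lagrangian flows developed by Ambrosio--Trevisan \cite{AT}, applied to the vector field $b := -\nabla u$. By \eqref{eq:|du|=1}, $|b| = 1$ $\fm$-a.e., while the formal divergence of $b$ is $-\Delta u = u$, which lies in $L^2(X)$ but is in general unbounded.

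The first step is to verify the hypotheses of the AT existence/uniqueness theorem. Beyond $|b| \in L^\infty$, the crucial ingredient is a bounded-compressibility witness: a solution to the continuity equation $\partial_t \mu_t + \div(b\mu_t) = 0$ with initial datum $\fm$ whose density against $\fm$ is locally bounded. Proposition~\ref{pr:meas} furnishes exactly such a solution, namely $\mu_t = \rho_t\fm$ with $\rho_t := \e^{-tu - t^2/2}$; its density is locally bounded thanks to the $1$-Lipschitz continuity of $u$, while the Gaussian decay of $\fm$ (Lemma~\ref{lm:CD}(ii)) combined with the at-most-linear growth of $u$ ensures $\rho_t$ is globally integrable. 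This input triggers the AT machinery, producing a map $F$ (unique up to negligible sets) satisfying (i) with $C(x,t) := \e^{-tu(x) - t^2/2}$ and (iii) as the defining property.

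Properties (ii), (iv), and (v) then follow as standard consequences. Because $|b| \equiv 1$ and the compression bound (i) transports $\fm$-a.e.\ identities onto the trajectories, the curve $t \mapsto F_t(x)$ has metric speed identically $1$ for $\fm$-a.e.\ $x$, giving both the $1$-Lipschitz bound (ii) and (v). The semigroup identity (iv) is obtained from uniqueness of the RLF: for fixed $s$, both $t \mapsto F_{t+s}(x)$ and $t \mapsto F_t(F_s(x))$ satisfy the defining ODE with common initial datum $F_s(x)$ at time $0$. The explicit push-forward formula $(F_t)_*\fm = \e^{-tu - t^2/2}\fm$ follows because both sides solve the continuity equation for $b$ with initial datum $\fm$, and uniqueness of such continuity-equation solutions in the appropriate class (available through the AT framework on $\RCD(1,\infty)$-spaces) identifies them.

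The principal obstacle I anticipate is precisely that $\div(b) = u$ is unbounded, which prevents quoting the simplest bounded-divergence form of the AT theorem verbatim. The remedy built into the approach is to employ the explicit continuity-equation solution of Proposition~\ref{pr:meas}, whose existence simultaneously exploits $\Delta u = -u$ and $|\nabla u| \equiv 1$, as the compressibility witness throughout. Should an intermediate approximation be needed to meet the exact hypotheses of \cite{AT}, one can smoothly truncate via $g_n \circ u$ (as in the proof of Theorem~\ref{th:affine}) to obtain regular Lagrangian flows with bounded divergence, whose uniform compressibility bound derived above permits passage to the limit through the stability results of \cite{AT}.
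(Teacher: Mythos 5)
Your high-level plan — invoke the Ambrosio--Trevisan theory for $b=-\nabla u$, use Proposition~\ref{pr:meas} as the continuity-equation solution with bounded compression, and deal with the unboundedness of $\div b = u$ by some localization or truncation — is the right skeleton, and it is indeed the route the paper follows. However, there are two genuine gaps in what you wrote.

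First, and most importantly, you have missed the hypothesis that actually drives uniqueness in \cite[Theorem~5.4]{AT}. Bounded divergence and a compressibility witness alone do not yield well-posedness in the AT framework; one also needs a bound on the \emph{symmetric derivative} $D^{\mathrm{sym}}\bm{b}$ (the deformation of the vector field). In the paper this is verified for the localized derivation $\bm{b}_R := -\psi_R\cdot\nabla u$ precisely because $\Hess u=0$ (Proposition~\ref{pr:eigen}(i)): a direct computation gives
\[
\Big| \int_X D^{\mathrm{sym}}\bm{b}_R(\phi_1,\phi_2)\,d\fm \Big| \le \frac{1}{R}\sqrt{\cE(\phi_1)\cE(\phi_2)}.
\]
This is where the Bochner-derived rigidity of $u$ enters the RLF construction, and your proposal does not mention it at all. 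Without some substitute for this estimate, the uniqueness claim (and hence properties (iv) and the ``moreover'' push-forward identity, which both rely on uniqueness) is not established.

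Second, your justification of (v) is too quick. The bound $|b|\equiv 1$ only gives the upper bound, i.e.\ the $1$-Lipschitz property (ii). To conclude that the metric speed is \emph{exactly} $1$ one also needs a matching lower bound; the paper obtains it by pairing the $1$-Lipschitz continuity of $u$ with the identity $\frac{d}{dt}\big[u(F_t(x))\big] = -|\nabla u|^2(F_t(x)) = -1$, which forces $|\dot{\eta}|\ge 1$. This two-sided argument is not a ``standard consequence'' of the compression bound, and you should supply it. A smaller remark: the paper localizes by multiplying $\nabla u$ by cut-off functions $\psi_R$ (so by uniqueness the flows of $-\psi_R\nabla u$ and $-\nabla u$ agree inside $B_R$), whereas your fallback $g_n\circ u$ alters $\nabla u$ everywhere by the factor $g_n'(u)<1$, so you would have to track an honest limit of \emph{different} flows rather than a nested sequence of restrictions; that is doable but materially more work than the paper's choice, and you would still need the $D^{\mathrm{sym}}$ bound in that setting.
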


Regular Lagrangian flows 
for gradient vector fields on $\RCD (K,\infty)$-spaces are 
studied in \cite[Theorems~9.7]{AT}.
However, the gradient vector field $\nabla u$ does not meet the assumption since $\Delta u$ is only in $L^{\infty}_{loc}(X)$ when $L^{\infty}(X)$ is required. 
Therefore, - though we cannot exactly apply the results in \cite{AT} - 
we follow closely an argument of a more general result \cite[Theorem~8.3]{AT} 
to prove Theorem~\ref{th:RLF}, with the aid of Proposition~\ref{pr:meas}. 

In the proof of Theorem~\ref{th:RLF}, 
we freely use notions introduced in \cite{AT}.

\begin{proof}
We first localize the argument. 
Let $T > 0$ and fix $x_0 \in X$. 
Take $R > 3T$ and a Lipschitz cut-off function 
$\psi_R:X \lra [0,1]$ such that $\psi_R=1$ on $B_R(x_0)$,
$\supp \psi_R \subset B_{2R}(x_0)$ and $|\nabla \psi_R | \le R^{-1}$.
Then we consider the (autonomous) \emph{derivation}
$\bm{b}_R:=-\psi_R \cdot \nabla u$ 
(\cite[Definition~3.1]{AT}), 
namely
\[ 
\bm{b}_R:\mathscr{A} \ni f \,\longmapsto\,
 -\psi_R \cdot \langle \nabla f,\nabla u \rangle \in L^{\infty}(X), 
\]
where $\mathscr{A}$ is the set of Lipschitz functions on $X$ with bounded support.
Notice that $\mathscr{A}$ is dense in $W^{1,2}(X)$ 
(see \cite{AGS-rcd} for instance).

We claim the uniqueness of weak solutions to the \emph{continuity equation}
\begin{equation}\label{eq:conteq}
\frac{dv_t}{dt} +\div(v_t \cdot \bm{b}_R)=0
\end{equation}
for $\bm{b}_R$ with the initial condition $v_0 = \bar{v} \in L^2(X)$ 
(\cite[Definition~4.2]{AT})
in the class
\[ \mathcal{L}_+ :=\{ v \in L_t^{\infty}(L_x^{\infty}) \,|\,
 t \longmapsto v_t\ \text{is weakly continuous} \} \]
(notice that $L^1 (X) \cap L^{\infty} (X) = L^{\infty} (X)$
since $\fm(X)<\infty$).
This claim follows from \cite[Theorem~5.4]{AT} with $r=s=2$ and $q=\infty$
(we in fact have the uniqueness in the larger class $L_t^{\infty}(L_x^2)$).
Indeed, the hypotheses of the theorem are verified in our case as follows. 
We can easily construct a class of functions satisfying \cite[(4-3)]{AT}, 
and the $L^2$-$\Gamma$-inequality always holds 
(as mentioned after \cite[Definition~5.1]{AT}). 
As for the assumptions on $\bm{b}_R$, clearly $\bm{b}_R \in L^{\infty}(X)$ holds
in the sense that $|\psi_R \cdot \nabla u| \in L^{\infty}(X)$. 
By the definition of \emph{divergence} $\div$ in \cite[Definition~3.5]{AT}, 
we deduce from
\[
\div \bm{b}_R 
= 
-\langle \nabla \psi_R , \nabla u \rangle - \psi_R \Delta u 
= 
-\langle \nabla \psi_R , \nabla u \rangle + \psi_R u 
\]
that $\div \bm{b}_R \in L^\infty(X)$. 
Finally, it follows from
\begin{align*}
&\int_X D^{\mathrm{sym}}\bm{b}_R(\phi_1 , \phi_2 ) \,d\fm \\
&= \frac{1}{2} \int_X \big\{ \psi_R \langle \nabla\phi_1 , \nabla u \rangle \Delta\phi_2
 +\psi_R \langle \nabla \phi_2 , \nabla u \rangle \Delta \phi_1
 -\div (\psi_R \nabla u) \langle \nabla \phi_1 , \nabla \phi_2 \rangle \big\} \,d\fm \\
&= -\frac{1}{2} \int_X \big\{ 2\psi_R \cdot \Hess u( \phi_1 , \phi_2 )
 +\langle \nabla \phi_1 ,\nabla u \rangle \langle \nabla \psi_R , \nabla \phi_2 \rangle
 +\langle \nabla\phi_2 ,\nabla u \rangle \langle \nabla\psi_R,\nabla\phi_1 \rangle \big\} \,d\fm
\end{align*}
(see \cite[(5-3)]{AT} for the definition of $D^{\mathrm{sym}}\bm{b}_R$), 
$\Hess u=0$, $|\nabla u| \le 1$ and $|\nabla \psi_R| \le R^{-1}$ that
\[ \bigg| \int_X D^{\mathrm{sym}}\bm{b}_R( \phi_1 ,\phi_2 ) \,d\fm \bigg|
 \le \frac{1}{R} \int_X | \nabla \phi_1 | | \nabla \phi_2 | \,d\fm
 \le \frac{1}{R} \sqrt{\cE(\phi_1) \cE(\phi_2)}. \]
{Here we have used the fact 
that we can replace $u$ with $\tilde{u} \in \bbD_{\infty} (X)$ 
which is bounded and agrees with $u$ on $B_{3R} (x_0)$ 
by virtue of the presence of $\psi_R$. 
Indeed, since $u$ is Lipschitz, 
$\tilde{u}$ can be constructed by taking a composite of 
an appropriate cut-off function and $u$. 
For $\tilde{u}$ we can use the relation 
between $H[\tilde{u}]$ and $\Hess \tilde{u}$, 
and the chain rule for $\Hess$ implies 
$\Hess u = \Hess \tilde{u}$ on $B_{2R} (x_0)$.}

Next we construct a solution to \eqref{eq:conteq} 
with the aid of Proposition~\ref{pr:meas} for localized initial data. 
Since $\bm{b} := - \nabla u \in L^\infty(X)$, $\exp ( - tu - t^2/2 ) \in L^2(X)$ 
by Lemma~\ref{lm:CD}(ii),
we can apply the superposition principle \cite[Theorem~7.6]{AT}
with $p=2$ and $r=\infty$
(to be precise, \cite[Theorem~7.6]{AT2} with the modified assumptions)
to the solution of the continuity equation for $-\nabla u$ in Proposition~\ref{pr:meas},
to obtain $\bm{\eta} \in \cP ( C ( [ 0, T ]; X ) )$ satisfying
\begin{enumerate}
\item[(a)]
$\bm{\eta}$ is concentrated on solutions $\eta$ to the ODE $\dot{\eta} = \bm{b} (\eta)$
(see \cite[Definition~7.3]{AT}), 

\item[(b)]
$\e^{-tu - t^2/2} \fm = ( e_t )_* \bm{\eta}$ for any $t \in [ 0 , T ]$,
where $e_t(\eta):=\eta(t)$ is the evaluation map. 
\end{enumerate}
Let $r \in ( 0 , R - T )$ and $\bar{v} = \fm ( B_{r} (x_0) )^{-1} \cdot \chi_{ B_r (x_0) }$,
where $\chi_A$ denotes the characteristic function of $A$. 
Then $v_t$ defined by 
$v_t \fm = (e_t)_* (( \bar{v}\circ e_0 ) \bm{\eta} )$
solves the continuity equation for $\bm{b}$ 
with the initial condition $v_0 = \bar{v}$. 
By applying the ODE in (a) for the class of test functions $f_n (x) := d ( x_n , x )$, 
where $\{ x_n \}_{n \in \N} \subset X$ is dense with $| \nabla u | = 1$ 
$\fm$-almost everywhere in mind, 
we can show that $\bm{\eta}$-almost every $\eta$ is 1-Lipschitz 
(see the proof of \eqref{eq:F-isom4} below). 
This fact immediately implies that $v_t$ 
solves the continuity equation for $\bm{b}_R$ also. 
In addition, $v_t = 0$ on $X \setminus B_{r+t} (x_0)$ and thus 
there exists an increasing function $C_r : [ 0, T ] \lra \R$ 
such that, for any measurable set $A \subset X$, 
\begin{align*}
\int_A v_t \, d \fm 
&= 
\int_{A \cap B_{r+t} (x_0)} v_t \, d \fm  
\le
\frac{1}{\fm ( B_r (x_0) )}
\int_{A \cap B_{r+t} (x_0)} d[(e_t)_* \bm{\eta}]
\\
&=
\frac{1}{\fm ( B_r (x_0) )}
\int_{ A\cap B_{r+t} (x_0)} 
\e^{- t u - t^2 /2 }  \, d\fm
\le 
\frac{C_r (t)}{\fm ( B_r (x_0) )} \fm (A) 
\end{align*}
for $t \in [ 0 , T ]$. 
Thus, combining this with the uniqueness of the continuity equation for $\bm{b}_R$ 
starting from $\bar{v}$ as claimed, 
we can apply \cite[Theorem~8.4]{AT} to obtain $\eta_x \in C ([0,T]; X)$
solving the ODE $\dot{\eta}_x = \bm{b}_R(\eta_x)$ with $\eta_x (0) = x$ 
for $(\bar{v} \fm)$-almost every $x$ and satisfying
\[
\bm{\eta} 
= 
\frac{1}{\fm ( B_r (x_0) ) } 
\int_{B_r (x_0)} \delta_{\eta_x} \, \fm (dx). 
\]

We are now in position to follow almost the same argument as in 
\cite[Theorem~8.3]{AT} to conclude our assertion. 
Let us define $F^{(r)} : B_r (x_0) \times [ 0 , T ] \lra X$ 
by $F^{(r)}_t (x) := \eta_x (t)$ for $(\bar{v} \fm)$-almost every $x \in X$. 
We can show the consistency in $r$ of $F^{(r)}$ as in 
\cite[Theorem~8.3]{AT} by using \cite[Theorem~8.4]{AT}, 
by taking larger $R > 0$ if necessary. 
Thus we can let $R \to \infty$ 
to obtain the solution $F:X \times [0,T] \lra X$
satisfying (i), (ii) and (iii). A similar argument allows us to take $T \to \infty$. 
One can further extend this to $F : X \times \R \lra X$ 
by the same construction for $\nabla u$ in parameter $( - \infty , 0 ]$,
and concatenating them. 
The uniqueness of the flow follows similarly and
it implies $( F_t )_* \fm = \e^{- t u - t^2 /2 } \fm$. 
The semigroup property (iv) also follows from the uniqueness.

We finally prove (v). 
On the one hand, 
we already know (ii) and it yields the metric speed of 
$\eta (t) : = F_t (x)$ satisfies $|\dot{\eta}| \le 1$. 
On the other hand, choosing 
$f=\psi_R u \in \mathscr{A}$ 
for (arbitrarily) large $R > 0$ implies 
\[ \frac{d}{dt} \big[ u\big( \eta(t) \big) \big] =-|\nabla u|^2 \big( \eta(t) \big) =-1. \]
Combining this with the $1$-Lipschitz continuity of $u$ yields $|\dot{\eta}| \ge 1$.
Therefore we obtain $|\dot{\eta}|=1$.
$\qedd$
\end{proof}

\begin{remark}[Gaussian behavior of $(F_t)_* \fm$]\label{rm:Gauss}
The relation $(F_t)_* \fm=\e^{-tu-t^2/2}\,\fm$ shows that
$\fm$ is enjoying the `Gaussian' behavior in the $t$-direction.
In fact, when $u(x)=s-t$ (hence $u(F_t^{-1}(x))=s$), we have 
\begin{equation}\label{eq:m_t}
\e^{-tu-t^2/2} =\frac{\e^{-(u+t)^2/2}}{\e^{-u^2/2}}
\end{equation}
provides the ratio of $e^{-u^2/2}$ and its translation $\e^{-(u+t)^2/2}$.
\end{remark}

\subsection{Behavior of the distance under the flow}\label{ssc:dist}

The goal of this subsection is 
to show that the regular Lagrangian flow $F$ constructed in Theorem~\ref{th:RLF} 
admits a representative which preserves the distance. 
More precisely, we prove following.

\begin{theorem}[$F_t$ preserves $d$] \label{th:dist}
There exists a map $\widetilde{F} : \R \times X \lra X$ such that
\begin{enumerate}[{\rm (i)}]
\item
\begin{enumerate}[{\rm (a)}]
\item
$\fm (\{ x \in X \,|\, F_t(x) \neq \widetilde{F}_t(x) \; \text{for some $t \in \R$} \}) = 0;$

\item
$\widetilde{F}_t$ is an isometry on $X$ for each $t \in \R;$
\end{enumerate}

\item
$( \widetilde{F}_t (x) )_{t \in \R}$ is a minimal geodesic in $X$ 
for every $x \in X$. 
\end{enumerate}
\end{theorem}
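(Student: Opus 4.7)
My plan is to lift the flow to the $L^2$-Wasserstein space and use that the lift $\cU$ is affine (Theorem~\ref{th:affine}). First I would show that for any $\mu_0 \in \cP^2_{\ac}(X)$ with bounded density, the curve $\mu_t := (F_t)_* \mu_0$ lies in $\cP^2_{\ac}(X)$ for all $t$ (immediate from Theorem~\ref{th:RLF}(i) and $(F_t)_*\fm = \e^{-tu-t^2/2}\fm$) and is $W_2$-absolutely continuous with metric speed essentially bounded by $1$, lifted from the pointwise Lipschitz bound of Theorem~\ref{th:RLF}(ii) by the standard superposition formula.

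The central technical step, corresponding to Lemma~\ref{lem:w-EVI} in the introduction, is to upgrade this to the evolution variational \emph{equality}
\[ \frac{1}{2} \frac{d}{dt} W_2^2(\mu_t,\sigma) = \cU(\sigma) - \cU(\mu_t) \]
for every $\sigma \in \cP^2_{\ac}(X)$ with $\cU(\sigma)<\infty$. The $\le$ direction is the standard EVI$_0$ identity for the gradient flow of a $0$-convex functional applied to the affine $\cU$, using that $\mu_t$ solves the continuity equation with velocity $-\nabla u$ (Theorem~\ref{th:RLF}(iii)). The matching $\ge$ direction is where affinity (as opposed to mere convexity) is essential: the backward flow $F_{-t}$ realizes the gradient flow of the $0$-convex $-\cU$, giving the reverse inequality.

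Once the EVI equality is in hand, applying it twice to two curves $\mu_t = (F_t)_*\mu_0$, $\nu_t = (F_t)_*\nu_0$ (with frozen $\nu_t$, then with frozen $\mu_t$) and summing yields $\frac{d}{dt}W_2^2(\mu_t,\nu_t) = 0$, so $\mu_0 \mapsto (F_t)_*\mu_0$ is a $W_2$-isometry of $\cP^2_{\ac}(X)$; by density it extends to a $W_2$-isometry $\bar T_t$ of all of $\cP^2(X)$. For $x$ in a full-$\fm$-measure set $A$ (where $F_t$ is pointwise defined and, by Fubini, the $y$-section of the $(\fm\otimes\fm)$-a.e.\ isometry $d(F_t(z),F_t(x)) = d(z,x)$ is full measure), approximating $\delta_x$ by $\mu_0^n = \fm(B_{r_n}(x))^{-1}\chi_{B_{r_n}(x)}\fm$ gives $W_2((F_t)_*\mu_0^n,\delta_{F_t(x)})^2 = \int d(F_t(z),F_t(x))^2\,d\mu_0^n(z) \le r_n^2 \to 0$, so $\bar T_t \delta_x = \delta_{F_t(x)}$. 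Since $\bar T_t$ sends Cauchy sequences of Diracs to Cauchy sequences of Diracs, we obtain for arbitrary $x \in X$ a well-defined $\widetilde F_t(x) := \lim_{x_n \to x,\,x_n \in A} F_t(x_n) \in X$, and $\widetilde F_t$ is an isometry of $X$ agreeing with $F_t$ on $A$; bijectivity comes from $\widetilde F_{-t}$ via Theorem~\ref{th:RLF}(iv), yielding (i)(a) and (i)(b).

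The geodesic property (ii) reduces to showing $d(x, \widetilde F_r(x)) = |r|$ for all $x \in X$ and $r \in \R$, from which the semigroup plus isometry gives $d(\widetilde F_s(x),\widetilde F_t(x)) = d(x, \widetilde F_{t-s}(x)) = |t-s|$. The inequality $\le |r|$ follows from the $1$-Lipschitz bound in Theorem~\ref{th:RLF}(ii); the reverse follows from the $1$-Lipschitz continuity of $u$ combined with $\tfrac{d}{dr}u(F_r(x)) = -|\nabla u|^2(F_r(x)) = -1$ (by Theorem~\ref{th:RLF}(iii) and $|\nabla u|=1$). The main obstacle is expected to be the rigorous derivation of the EVI equality: controlling $\frac{d}{dt}\cU(\mu_t)$ when $u$ is unbounded, identifying $-\nabla u$ as the first variation of $\cU$ along $\mu_t$, and handling the reverse inequality via the backward flow all require careful approximation arguments in this non-smooth infinite-dimensional setting, making crucial use of the at-most-linear growth of $u$ and the Gaussian decay of $\fm$ from Lemma~\ref{lm:CD}(ii) to maintain integrability.
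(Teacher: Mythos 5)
Your overall architecture matches the paper's: lift the flow to $\cP^2(X)$, upgrade the affinity of $\cU$ to a $0$-evolution variational \emph{equality} for $(\mu_t)$ (the paper's Lemma~\ref{lem:w-EVI}), apply it twice with the midpoint to deduce that $(F_t)_*$ preserves $W_2$ on absolutely continuous measures (Lemma~\ref{lem:isom}), and then transfer this to a pointwise statement about $F_t$. Your derivation of part~(ii) from part~(i) -- using $u(\widetilde F_r(x)) = u(x) - r$ together with the $1$-Lipschitz bounds to pin down $d(\widetilde F_s(x),\widetilde F_t(x))=|t-s|$ -- is correct and in fact a bit more direct than the paper's Proposition~\ref{pr:F-geod}, which instead invokes the energy dissipation identity for EVI gradient flows.

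The gap is in the passage from the $W_2$-isometry to the pointwise $d$-isometry in part~(i). You write ``for $x$ in a full-$\fm$-measure set $A$ (where $\ldots$ by Fubini, the $y$-section of the $(\fm\otimes\fm)$-a.e.\ isometry $d(F_t(z),F_t(x)) = d(z,x)$ is full measure)'' and then use this to compute $\int d(F_t(z),F_t(x))^2\,d\mu_0^n(z) \le r_n^2$, which in turn gives $\bar T_t\delta_x=\delta_{F_t(x)}$. But the $(\fm\otimes\fm)$-a.e.\ isometry of $F_t$ is precisely the statement you are trying to prove; it does not follow from the $W_2$-isometry of $\bar T_t$ by itself, because the latter only controls push-forwards of \emph{absolutely continuous} measures and gives no pointwise control on $x\mapsto F_t(x)$. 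A priori $(F_t)_*$ of a measure concentrated near $x$ could spread out in a way consistent with $W_2$-isometry on $\cP^2_{\ac}$ but not induced by a pointwise isometry (the underlying function $F_t$ is only an $L^0$-class at this stage, and it is not clear that a $W_2$-isometry of $\cP^2(X)$ must map Dirac masses to Dirac masses). This is exactly the obstacle the paper addresses in Proposition~\ref{pr:F-isom}: for a fixed bounded $1$-Lipschitz $f$, it mollifies $f\circ F_t$ by the heat semigroup, $g_\ve := \sH_\ve(f\circ F_t)$, so that $g_\ve$ is a genuine Lipschitz (hence continuous) function; the Kantorovich--Rubinstein duality, the $W_2$-isometry from Lemma~\ref{lem:isom} applied to $\sH_\ve(\mu_r)$ and $\sH_\ve(\nu_r)$, and the $W_2$-contraction~\eqref{eq:Wcont} of the heat flow then yield $|g_\ve(x)-g_\ve(y)|\le \e^{-\ve}d(x,y)$ for \emph{all} $x,y$; finally letting $\ve\downarrow 0$ and running over the countable family $f_i=d(\cdot,x_i)$ produces the $(\fm\otimes\fm)$-a.e.\ statement. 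You need to insert some such regularization step (heat-flow smoothing or an equivalent device producing genuine continuous representatives) before the Fubini/approximation argument can be run; as written, the argument is circular at this point.
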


The proof is divided into two propositions below 
(Propositions~\ref{pr:F-isom}, \ref{pr:F-geod}). 
To this end, 
we first lift the flow $F$ on $X$ to the one on $\cP^2 (X)$. 
We remark that, for any $\mu \in \cP^2(X)$,
the curve $t \longmapsto (F_t)_* \mu$ is $1$-Lipschitz in $W_2$
thanks to Theorem~\ref{th:RLF}(ii).

\begin{lemma} \label{lem:C^1}
Let $\mu = \rho \fm \in \cP_{\ac}(X)$ 
where $\rho$ is bounded and of bounded support,  
and $\mu_t : = ( F_t )_* \mu$. 
\begin{enumerate}[{\rm (i)}]
\item \label{lem:C^1-1}
We have $\mu_t = (\rho \circ F_{-t}) \e^{-t u - t^2/2} \fm$ 
for all $t \in \R$.  
In particular, $\mu_t \ll \fm$ and 
the density of $\mu_t$ is bounded and of bounded support. 

\item \label{lem:C^1-2}
Suppose that, for $\fm$-almost every $x \in X$, 
$\rho ( F_t (x) )$ is continuous in $t \in \R$. 
Then, for any $ f\in W^{1,2}_{\loc}(X)$,
the function $t \longmapsto \int_X f \,d\mu_t$ belongs to 
$C^1 (\R)$ and we have
\[
\frac{d}{dt} \int_X f \, d \mu_t 
= 
-\int_X \langle \nabla f , \nabla u \rangle \, d \mu_t . 
\]
\end{enumerate}
\end{lemma}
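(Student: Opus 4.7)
For (i), the plan is to combine two facts from Theorem~\ref{th:RLF}: the pushforward identity $(F_t)_*\fm = \e^{-tu-t^2/2}\fm$ and the semigroup relation $F_{-t}\circ F_t = \mathrm{id}$ holding $\fm$-almost everywhere (property (iv)). For any bounded Borel test function $\phi$, write
\[
\int_X \phi\,d\mu_t = \int_X \phi(F_t(x))\rho(x)\,d\fm(x)
 = \int_X \phi(F_t(x))\rho\big(F_{-t}(F_t(x))\big)\,d\fm(x),
\]
where the second equality uses the semigroup identity $\fm$-a.e. Applying the pushforward identity to the integrand $y\mapsto \phi(y)\rho(F_{-t}(y))$ yields
\[
\int_X \phi\,d\mu_t = \int_X \phi(y)\rho(F_{-t}(y))\e^{-tu(y)-t^2/2}\,d\fm(y),
\]
which is the desired density formula. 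For the bounded support: since $t\mapsto F_t(x)$ has unit metric speed (Theorem~\ref{th:RLF}(v)), we have $d(F_t(x),x)\le |t|$, so the support of $\mu_t$ lies in the $|t|$-neighborhood of $\supp\rho$. On this bounded set, $u$ is bounded by its $1$-Lipschitz continuity \eqref{eq:|du|=1}, so $\e^{-tu-t^2/2}$ is bounded; combined with $\|\rho\|_{L^\infty}$, the density of $\mu_t$ is bounded.

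For (ii), I would first reduce to the case $f\in W^{1,2}(X)$: for $t$ restricted to any compact interval $[-T,T]$, all the measures $\mu_t$ are supported in a common bounded set by the previous paragraph, so multiplying $f$ by a Lipschitz cut-off equal to $1$ on this set does not affect $\int_X f\,d\mu_t$. Next, Theorem~\ref{th:RLF}(iii) gives, for $\fm$-almost every $x$, the integrated form
\[
f(F_t(x)) - f(F_s(x)) = -\int_s^t \langle\nabla f,\nabla u\rangle(F_r(x))\,dr.
\]
Multiplying by $\rho(x)$, integrating over $X$, and applying Fubini produces
\[
\int_X f\,d\mu_t - \int_X f\,d\mu_s = -\int_s^t \int_X \langle\nabla f,\nabla u\rangle\,d\mu_r\,dr.
\]
Fubini is legitimate because $|\langle\nabla f,\nabla u\rangle|\le|\nabla f|\in L^2(\fm)$, $\rho\in L^\infty(\fm)$ is compactly supported, and by (i) the density of $\mu_r$ is uniformly bounded for $r\in[-T,T]$.

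Finally, to obtain $C^1$-regularity, I would show that $r\mapsto \int_X \langle\nabla f,\nabla u\rangle\,d\mu_r$ is continuous via dominated convergence. Using the explicit density $(\rho\circ F_{-r})\e^{-ru-r^2/2}$ from (i), the hypothesis that $t\mapsto \rho(F_t(x))$ is continuous for $\fm$-a.e.\ $x$ transfers, via the semigroup identity $F_{-r}(y) = F_{-r+r_0}(F_{-r_0}(y))$ and the absolute continuity $(F_{-r_0})_*\fm \ll \fm$ (Theorem~\ref{th:RLF}(i) applied to $F_{-r_0}$), to the continuity of $r\mapsto \rho(F_{-r}(y))$ for $\fm$-a.e.\ $y$. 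Combined with the trivial continuity of $\e^{-ru-r^2/2}$ and the uniform $L^\infty$-bound on the density over compact $r$-intervals, dominated convergence yields the continuity of the integrand in $r$, hence the $C^1$-regularity and the stated derivative formula. The main technical hurdle I foresee is the transfer of the pointwise continuity hypothesis on $\rho\circ F_\cdot$ between different base times via the semigroup and absolute continuity of pushforwards; the Fubini step and dominated convergence are routine once part (i) is in hand.
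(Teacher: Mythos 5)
Your proof is correct and follows essentially the same route as the paper: for (i), inserting $\rho = (\rho\circ F_{-t})\circ F_t$ via the semigroup property and then applying the pushforward identity $(F_t)_*\fm=\e^{-tu-t^2/2}\fm$; for (ii), reducing to $f\in W^{1,2}(X)$ by bounded support, using Theorem~\ref{th:RLF}(iii) to express the difference quotient as an integral, and then establishing continuity of $t\mapsto\int_X\langle\nabla f,\nabla u\rangle\,d\mu_t$. The only real deviation is that where the paper cites \cite[Lemma~5.11]{Gi-split}, you argue this continuity directly by dominated convergence using the explicit density from (i), the uniform local bound on that density, and the continuity hypothesis on $\rho\circ F_\cdot$; this is a perfectly valid substitute and arguably more self-contained. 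One small remark: the ``transfer between base times'' you flag at the end as a potential hurdle is actually a non-issue --- the hypothesis that $t\mapsto\rho(F_t(x))$ is continuous for $\fm$-a.e.\ $x$ already gives, by the substitution $t=-r$, the continuity of $r\mapsto\rho(F_{-r}(y))$ for $\fm$-a.e.\ $y$, with no need for the semigroup-plus-absolute-continuity argument.
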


\begin{proof}
(\ref{lem:C^1-1}) By Theorem~\ref{th:RLF}, 
for any bounded measurable $f : X \lra \R$, we find
\[
\int_X f \, d \mu_t 
 = 
\int_X (f \circ F_t) \rho \, d \fm 
 = 
\int_X \big( f (\rho \circ F_{-t} ) \big) \circ F_t \, d \fm 
= 
\int_X f (\rho \circ F_{-t}) \e^{-tu - t^2/2} \, d \fm . 
\]
It immediately implies the former assertion. 
The latter one easily follows from the assumption on $\mu$ 
and Theorem~\ref{th:RLF}(ii). 

(\ref{lem:C^1-2})
Since (i) says that $\mu_t$ has a bounded support for each $t \in \R$, 
we can assume $f \in W^{1,2} (X)$ without loss of generality.
By virtue of Theorem~\ref{th:RLF}(iii), it suffices to show that
\[
t \longmapsto \int_X \langle \nabla f,\nabla u \rangle \, d\mu_t 
\]
is continuous. 
Since $| \nabla u | = 1$ $\fm$-almost everywhere, 
we can easily deduce it from \cite[Lemma~5.11]{Gi-split} 
with the aid of (i), Theorem~\ref{th:RLF}(ii) and our assumption on $\rho$. 
$\qedd$
\end{proof}

Recall the function $\cU(\mu)=\int_X u \,d\mu$ in the previous section \eqref{eq:U},
which is affine on $\cP^2_{\ac}(X)$ by Theorem~\ref{th:affine}.
The next lemma will play a key role in this section.

\begin{lemma}[Evolution variational equality for $\cU$] \label{lem:w-EVI}
Let $\mu \in \cP (X)$ be of bounded support 
with bounded continuous density, 
and $\mu_t : = ( F_t )_* \mu$. 
Then $\mu_t$ solves the \emph{$0$-evolution variational equality} for $\cU$
in the sense that, for any $\nu \in \cP^2(X)$,
\begin{equation}
\label{eq:w-evi}
\frac{d}{dt} \frac{W_2^2(\mu_t,\nu)}{2}
 = \cU ( \nu ) - \cU ( \mu_t )
\end{equation}
holds at almost every $t \in \R$.
\end{lemma}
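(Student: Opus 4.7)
The strategy is to compute the derivative on the left of \eqref{eq:w-evi} using the Kantorovich duality \eqref{eq:Kant}, and then to match it to the right-hand side via the affineness of $\cU$ from Theorem~\ref{th:affine}. Since $t \longmapsto F_t(x)$ is $1$-Lipschitz for every $x$ by Theorem~\ref{th:RLF}(ii), the curve $t \longmapsto \mu_t$ is $1$-Lipschitz with respect to $W_2$, so $t \longmapsto W_2^2(\mu_t,\nu)$ is locally Lipschitz on $\R$, hence differentiable at almost every $t \in \R$.

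At any such $t_0$, fix a Kantorovich potential $(\varphi,\psi)$ for $(\mu_{t_0},\nu)$; since $\supp \mu_{t_0}$ is bounded by Lemma~\ref{lem:C^1}(i), standard arguments allow us to choose $\varphi$ locally Lipschitz, so in particular $\varphi \in W^{1,2}_{\loc}(X)$. Consider
\[ h(t) := \frac{W_2^2(\mu_t,\nu)}{2} - \int_X \varphi \,d\mu_t + \int_X \psi \,d\nu. \]
By \eqref{eq:Kant} we have $h \ge 0$ on $\R$ with $h(t_0) = 0$, so $t_0$ is a global minimizer of $h$. By Lemma~\ref{lem:C^1}(ii), $t \longmapsto \int_X \varphi \,d\mu_t$ belongs to $C^1(\R)$ with derivative $-\int_X \langle \nabla\varphi,\nabla u\rangle \,d\mu_t$, so $h$ is differentiable at $t_0$; setting $h'(t_0) = 0$ yields
\[
\frac{d}{dt} \frac{W_2^2(\mu_t,\nu)}{2} \bigg|_{t = t_0}
 = -\int_X \langle \nabla\varphi,\nabla u\rangle \,d\mu_{t_0}.
\]

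It remains to identify this right-hand side with $\cU(\nu) - \cU(\mu_{t_0})$. Let $(\sigma_s)_{s \in [0,1]}$ be the $W_2$-geodesic from $\mu_{t_0}$ to $\nu$, whose initial velocity in the sense of optimal transport is $-\nabla\varphi$. Since $u$ is $1$-Lipschitz of at most linear growth, $\cU$ is $W_2$-continuous on $\cP^2(X)$; approximating $\nu$ by absolutely continuous measures and applying Theorem~\ref{th:affine} gives $\cU(\sigma_s) = (1-s)\cU(\mu_{t_0}) + s\cU(\nu)$, hence $\frac{d}{ds}\cU(\sigma_s)|_{s=0^+} = \cU(\nu) - \cU(\mu_{t_0})$. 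The first variation formula for functionals along Wasserstein geodesics in the $\RCD(K,\infty)$ framework (cf.\ \cite{Gi-dg,AGS-rcd}) independently expresses the same derivative as $-\int_X \langle \nabla u,\nabla\varphi\rangle \,d\mu_{t_0}$, and combining the two equalities proves the lemma.

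The main obstacle I anticipate is the rigorous application of the first variation formula with a Kantorovich potential that is only locally Lipschitz and for a general target $\nu \in \cP^2(X)$ (not necessarily absolutely continuous). As a fallback, a dual argument exploiting the reverse regular Lagrangian flow of $+\nabla u$ (which exists by symmetry in Theorem~\ref{th:RLF}) together with the affineness of $-\cU$ should yield the matching opposite inequality through the very same Kantorovich duality scheme, and the two inequalities would combine into the desired equality.
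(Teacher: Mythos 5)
Your outline follows the same strategy as the paper: differentiate $W_2^2(\mu_t,\nu)/2$ through a Kantorovich potential (your $h'(t_0)=0$ argument is a minor variant of the paper's two-sided Dini-derivative estimate), then identify $-\int_X\langle\nabla u,\nabla\varphi\rangle\,d\mu_{t_0}$ with $\cU(\nu)-\cU(\mu_{t_0})$ via the affineness of $\cU$ along the $W_2$-geodesic from $\mu_{t_0}$ to $\nu$. However, the step you yourself flag as the ``main obstacle'' is a genuine gap, not just an anticipated difficulty, and the paper does real work to close it.

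Two concrete things are missing. First, before picking up a Kantorovich potential, the paper reduces (via \cite[Proposition~2.21(i)]{AGS-rcd}) to the case where $\nu$ has bounded support and bounded density; this is what makes $\varphi$ Lipschitz (citing \cite[Lemma~2]{Mc}), since that argument requires both $\mu_{t_0}$ and $\nu$ to have bounded support -- boundedness of $\supp\mu_{t_0}$ alone is not enough, and your proposal does not perform this reduction. Second, the ``first variation formula'' you invoke generically is exactly the point where the paper must cite a precise result, namely \cite[Proposition~5.15]{Gi-split}, which gives
\[
\frac{d}{ds}\Big|_{s=0}\int_X u\,d\nu_s = -\int_X\langle\nabla u,\nabla\varphi_t\rangle\,d\mu_t
\]
along the geodesic $(\nu_s)$ from $\mu_t$ to $\nu$. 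That proposition needs the density of $\mu_t$ to be continuous, which is why the lemma assumes $\mu$ has bounded \emph{continuous} density -- a hypothesis your argument never actually uses, a telltale sign that a step is missing. Your fallback via the reverse flow and a ``matching opposite inequality'' does not resolve this: to get even one inequality you still need to bound the $s$-derivative of $\cU(\nu_s)$ at $s=0$ by $-\int\langle\nabla u,\nabla\varphi\rangle\,d\mu_{t_0}$, which is the same first-variation statement. So the route is right, but the crucial identity must be justified concretely as in the paper rather than appealed to as known folklore.
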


The \emph{$0$-evolution variational inequality} (abbreviated as the \emph{$0$-EVI})
means that
\[ \frac{d}{dt} \frac{W_2^2(\mu_t,\nu)}{2}
 \le \cU ( \nu ) - \cU ( \mu_t ). \]
We will obtain equality as in \eqref{eq:w-evi} due to the symmetry between $u$ and $-u$,
thus we called it the $0$-evolution variational equality.

\begin{proof}
From Theorem~\ref{th:RLF}(ii), 
we deduce that $W_2(\mu_t,\mu_s) \le |s-t|$
so that $t \longmapsto \mu_t$ is $W_2$-absolutely continuous.
Thanks to \cite[Proposition~2.21(i)]{AGS-rcd}, 
it suffices to show \eqref{eq:w-evi} for $\nu \in \cP (X)$
of bounded support with bounded density. 

Let $( \varphi_t, \psi_t )$ be a Kantorovich potential for $( \mu_t , \nu )$ (recall \eqref{eq:Kant}),
namely
\begin{align*}
\frac{1}{2} W_2^2(\mu_t,\nu)
&= \int_X \varphi_t \,d\mu_t -\int_X \psi_t \,d\nu, \\
\varphi_t(x) -\psi_t(y) &\le \frac{d^2(x,y)}{2} \quad \text{for all}\ x,y \in X.
\end{align*}
We first claim that, 
for a point $t$ of differentiability of $t \longmapsto W_2^2(\mu_t,\nu)$, 
we have
\begin{equation}\label{eq:dW/dt}
\frac{d}{dt} 
\frac{W_2^2(\mu_t,\nu)}{2}
 =
- 
\int_X 
  \langle \nabla u , \nabla \varphi_t \rangle 
\,d\mu_t . 
\end{equation}
Since both $\mu_t$ and $\nu$ have bounded support, 
we can assume that $\varphi_t$ is Lipschitz (see \cite[Lemma~2]{Mc} for instance)
and hence $\varphi_t \in W^{1,2}_{\loc} (X)$. 
The Kantorovich duality \eqref{eq:Kant} immediately implies 
\begin{align*}
\frac{W_2^2(\mu_{t+s} ,\nu)}{2} 
& \ge 
\int_X \varphi_t \, d\mu_{t+s} - \int_X \psi_t \, d \nu , 
& 
\frac{W_2^2 ( \mu_t , \nu )}{2}
& = 
\int_X \varphi_t \, d \mu_t - \int_X \psi_t \, d \nu . 
\end{align*}
By combining them, we have 
\begin{align*}
\frac{d}{dt} 
\frac{W_2^2(\mu_t,\nu)}{2}
&= 
\lim_{s\downarrow 0}
\frac{W_2^2(\mu_{t+s} ,\nu) - W_2^2 ( \mu_t , \nu )}{2s}
\\
&\ge
\lim_{s \downarrow 0} \frac{1}{s} 
\left( 
     \int_X \varphi_t \, d \mu_{t+s} 
     - 
     \int_X \varphi_t \, d \mu_t 
\right)
= 
- 
\int_X 
  \langle \nabla u , \nabla \varphi_t \rangle 
\,d\mu_t,
\end{align*}
where the last inequality follows 
from Lemma~\ref{lem:C^1}(\ref{lem:C^1-2}). 
We similarly observe 
\[
\frac{d}{dt} 
\frac{W_2^2(\mu_t,\nu)}{2}
= 
\lim_{s\downarrow 0}
\frac{W_2^2(\mu_{t} ,\nu) - W_2^2 ( \mu_{t-s} , \nu )}{2s}
\le
- 
\int_X 
  \langle \nabla u , \nabla \varphi_t \rangle 
\,d\mu_t . 
\]
Thus \eqref{eq:dW/dt} holds. 

Next we prove \eqref{eq:w-evi}. 
Let $( \nu_s )_{s \in [0,1]}$ be the unique $W_2$-geodesic from $\mu_t$ to $\nu$.
Then, since the density of $\mu$ is continuous, 
we can apply \cite[Proposition~5.15]{Gi-split} 
to deduce that $s \longmapsto \int_X u \,d\nu_s$ is 
differentiable at $s=0$ and
\begin{equation} \label{eq:du-geod}
\left. \frac{d}{ds} \right|_{s=0} \int_X u \,d\nu_s
 =
-\int_X \langle \nabla u,\nabla\varphi_t \rangle \,d\mu_t.
\end{equation}
We finally recall from Theorem~\ref{th:affine} that
$\cU ( \nu_s ) =(1-s)\cU (\nu_0) +s\cU(\nu_1)$,
therefore
\[
\left. \frac{d}{ds} \right|_{s=0} \int_X u \,d\nu_s
= \cU(\nu) -\cU(\mu_t).
\]
This together with \eqref{eq:dW/dt} and \eqref{eq:du-geod} 
yields \eqref{eq:w-evi}. 
$\qedd$
\end{proof}

\begin{remark}\label{rm:w-EVI}
In Lemma~\ref{lem:w-EVI}, 
the equality \eqref{eq:w-evi} in fact holds for all $t \in \R$ 
since $W_2^2( \mu_t , \nu )$ is locally Lipschitz
and $\cU(\mu_t)$ is continuous in $t$. 
\end{remark}

From Lemma~\ref{lem:w-EVI},
we deduce that the flow given by $F$ preserves $W_2$. 

\begin{lemma}[$F_t$ preserves $W_2$] \label{lem:isom}
Let $\mu, \nu \in \cP^2_{\ac} (X)$ with continuous bounded densities. 
Set $\mu_t : = ( F_t )_* \mu$ and $\nu_t := ( F_t )_* \nu$. 
Then we have
\begin{equation} \label{eq:isom}
W_2 ( \mu_t, \nu_t ) = W_2 ( \mu , \nu )
 \qquad \text{for all}\ t \in \R.
\end{equation}
\end{lemma}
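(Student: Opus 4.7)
\emph{Proof proposal.} The plan is to invoke the $0$-evolution variational equality (Lemma~\ref{lem:w-EVI}, with the improvement to all $t$ from Remark~\ref{rm:w-EVI}) along \emph{both} curves $(\mu_t)$ and $(\nu_t)$ in turn, splitting the two-argument increment of $W_2^2$ into two one-argument increments, and then cancelling the resulting integrals by exploiting the linear evolution of $\cU$ along $F$.

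First I would verify that $\cU$ evolves affinely along the flow:
\[ \cU(\mu_t)=\cU(\mu)-t, \qquad \cU(\nu_t)=\cU(\nu)-t \qquad \text{for all } t\in\R. \]
Since $u$ is $1$-Lipschitz (by \eqref{eq:|du|=1} and the Sobolev-to-Lipschitz property) it belongs to $W^{1,2}_{\loc}(X)$, so Lemma~\ref{lem:C^1}(ii) applied with $f=u$ (whose hypothesis of continuity of $\rho\circ F_t$ in $t$ is met because $\rho$ is continuous and $t\mapsto F_t(x)$ is continuous by Theorem~\ref{th:RLF}(ii)) together with $|\nabla u|^2=1$ $\fm$-almost everywhere gives
\[ \frac{d}{dt}\cU(\mu_t) = -\int_X \langle\nabla u,\nabla u\rangle\,d\mu_t = -1, \]
and similarly for $\nu_t$.

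Next, fix $t_0<t_1$ in $\R$ and decompose
\[ \frac{W_2^2(\mu_{t_1},\nu_{t_1}) - W_2^2(\mu_{t_0},\nu_{t_0})}{2} = \frac{W_2^2(\mu_{t_1},\nu_{t_1}) - W_2^2(\mu_{t_1},\nu_{t_0})}{2} + \frac{W_2^2(\mu_{t_1},\nu_{t_0}) - W_2^2(\mu_{t_0},\nu_{t_0})}{2}. \]
Both test measures $\nu_{t_0}$ and $\mu_{t_1}$ lie in $\cP^2(X)$ by Lemma~\ref{lem:C^1}(i), hence Lemma~\ref{lem:w-EVI} applies with $\mu_t$ against $\nu_{t_0}$ (for the second bracket) and with $\nu_t$ against $\mu_{t_1}$ (for the first bracket). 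Because $t\mapsto W_2^2(\mu_t,\nu_{t_0})$ and $t\mapsto W_2^2(\mu_{t_1},\nu_t)$ are locally Lipschitz by Theorem~\ref{th:RLF}(ii), the fundamental theorem of calculus gives
\[ \frac{W_2^2(\mu_{t_1},\nu_{t_1}) - W_2^2(\mu_{t_0},\nu_{t_0})}{2} = \int_{t_0}^{t_1}\Bigl\{\bigl[\cU(\nu_{t_0})-\cU(\mu_s)\bigr] + \bigl[\cU(\mu_{t_1})-\cU(\nu_s)\bigr]\Bigr\}\,ds. \]

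Finally, substituting the affine evolution from the first step, the integrand equals
\[ \bigl[\cU(\nu)-t_0-\cU(\mu)+s\bigr] + \bigl[\cU(\mu)-t_1-\cU(\nu)+s\bigr] = 2s - t_0 - t_1, \]
whose integral over $[t_0,t_1]$ vanishes. Therefore $W_2(\mu_{t_1},\nu_{t_1})=W_2(\mu_{t_0},\nu_{t_0})$ for all $t_0,t_1\in\R$, and specializing $t_0=0$ gives \eqref{eq:isom}. The conceptual content is that the flow is a $0$-EVI gradient flow of the affine functional $\cU$, and since an affine function has vanishing convexity constant both with sign $+$ and with sign $-$, the flow must be isometric in $W_2$; the only step that requires care is checking that the test measures $\mu_{t_1}$, $\nu_{t_0}$ are admissible for the EVI of Lemma~\ref{lem:w-EVI}, which is supplied by the bounded-support, bounded-density conclusion of Lemma~\ref{lem:C^1}(i).
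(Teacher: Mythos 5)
Your decomposition is a clean alternative to the paper's argument: instead of introducing the $W_2$-midpoint $\omega$ of $\mu_t$ and $\nu_t$ and invoking the parallelogram-type estimate $W_2^2(\mu_{t+\ve},\nu_{t+\ve}) \le 2W_2^2(\mu_{t+\ve},\omega) + 2W_2^2(\omega,\nu_{t+\ve})$ to control an upper Dini derivative, you split the two-sided increment into two one-sided increments, apply the EVI equality (Lemma~\ref{lem:w-EVI}) to each, and cancel the resulting integrals via the explicit evolution $\cU(\mu_t)=\cU(\mu)-t$. Both approaches feed off the same input (Lemma~\ref{lem:w-EVI}, hence ultimately the affineness of $\cU$ from Theorem~\ref{th:affine}), but yours avoids the midpoint and the one-sided limsup/liminf bookkeeping, trading it for the explicit formula $\cU(\mu_t)=\cU(\mu)-t$; that formula is indeed correct and follows either as you say from Lemma~\ref{lem:C^1}(ii), or even more directly from $u(F_t(x))=u(x)-t$ (Theorem~\ref{th:RLF}(iii) together with $|\nabla u|=1$). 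The algebraic cancellation of the integrand $2s-t_0-t_1$ checks out.

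However, there is a genuine gap of scope. Lemma~\ref{lem:w-EVI} requires the \emph{flowing} measure to have \emph{bounded support} (in addition to bounded continuous density); the statement of Lemma~\ref{lem:isom} does not assume bounded support. Your invocation of Lemma~\ref{lem:C^1}(i) only shows that bounded support is propagated along the flow, not that it holds to begin with. So as written, your argument establishes the lemma only when both $\mu$ and $\nu$ have bounded support; to obtain the full statement you still need the cut-off/approximation step that the paper carries out at the end (truncating $\mu,\nu$ by continuous cut-offs $\psi_n$, applying the bounded-support case, and passing to the limit using $W_2$-convergence of the truncations together with weak convergence of the pushforwards). That step is routine, but it must appear.
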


\begin{proof}
Let us first additionally suppose that 
$\mu$ and $\nu$ have bounded supports, 
then Lemma~\ref{lem:w-EVI} is available. 
Since $W_2 ( \mu_t , \nu_t )$ is $2$-Lipschitz in $t$,
it suffices to show that $t \longmapsto W_2^2 ( \mu_t , \nu_t )$ has 
a vanishing derivative for almost every $t$. 
Let $\omega$ be the midpoint of the $W_2$-geodesic 
from $\mu_t$ and $\nu_t$. 
Then, by \eqref{eq:w-evi}, 
\begin{align*}
&\varlimsup_{\ve \downarrow 0}
\frac{W_2^2(\mu_{t+\ve},\nu_{t+\ve}) -W_2^2(\mu_t,\nu_t)}{2\ve}
 \le 
\varlimsup_{\ve \downarrow 0} 
\frac{2W_2^2(\mu_{t+\ve},\omega)+2W_2^2(\omega,\nu_{t+\ve})-W_2^2(\mu_t,\nu_t)}{2\ve} \\
&= 
\varlimsup_{\ve \downarrow 0}
\left( 
\frac{W_2^2(\mu_{t+\ve},\omega) -W_2^2(\mu_t,\omega)}{\ve}
 +
\frac{W_2^2(\omega,\nu_{t+\ve}) -W_2^2(\omega,\nu_t)}{\ve} 
\right)
\\
& = 
4 
\bigg( 
  \int_X u \,d\omega 
  -\frac{1}{2} \int_X u \,d\mu_t 
  -\frac{1}{2} \int_X u \,d\nu_t 
\bigg)
=0.
\end{align*}
Here the last equality follows from the affine property of $\cU$ 
(Theorem~\ref{th:affine}).
By the same way, we have
\[ 
\varliminf_{\ve \downarrow 0}
 \frac{W_2^2(\mu_t,\nu_t) -W_2^2(\mu_{t-\ve},\nu_{t-\ve})}{2\ve} 
\ge 0. 
\]
Therefore \eqref{eq:isom} holds for every $t \in \R$. 

We next remove the assumption on bounded support 
by a standard cut-off argument. 
Let $x_0 \in X$ and $\psi_n : X \lra \R$ be continuous 
satisfying $0 \le \psi_n \le 1$, $\psi_n |_{B_n (x_0)} = 1$ 
and $\psi_n |_{X \setminus B_{n+1} (x_0)} = 0$. 
Let us define $\mu^{(n)},\nu^{(n)} \in \cP^2_{\ac}(X)$ 
for $n \in \N$ as follows: 
\[
\mu^{(n)} : = \left( \int_X \psi_n \, d \mu \right)^{-1} \psi_n \cdot \mu,
\qquad
\nu^{(n)} : =\left( \int_X \psi_n \, d \mu \right)^{-1} \psi_n \cdot \nu. 
\]
We can easily see $W_2 ( \mu^{(n)} , \mu ) \to 0$ as $n \to \infty$ 
(see \cite[Proposition~7.1.5]{AGS-book} for instance). 
Thus \eqref{eq:isom} implies that 
$\{ ( F_t )_* \mu^{(n)} \}_{n \in \N}$ 
forms a $W_2$-Cauchy sequence. 
For each bounded $f \in C (X)$, 
the dominated convergence theorem yields 
\[
\lim_{n \to \infty} 
\int_X f \, d [( F_t )_* \mu^{(n)}] 
= 
\lim_{n \to \infty} 
\int_X f \circ F_t \, d \mu^{(n)} 
= 
\int_X f \circ F_t \, d \mu
= 
\int_X f \, d \mu_t . 
\]
Thus $W_2 ( ( F_t )_* \mu^{(n)} , \mu_t ) \to 0$ as $n \to \infty$ 
(again by \cite[Proposition~7.1.5]{AGS-book}),
and similarly $W_2 ( ( F_t )_* \nu^{(n)} , \nu_t ) \to 0$.
Thus the conclusion holds by applying \eqref{eq:isom}
to $( \mu^{(n)} , \nu^{(n)} )$ and letting $n \to \infty$. 
$\qedd$
\end{proof}

We are now ready to prove Theorem~\ref{th:dist}. 
We first deduce from Lemma~\ref{lem:isom} that $F_t$ is an isometry. 
Note that we may not have Lebesgue points 
since $\fm$ is not necessarily doubling. 
Thus we will follow an alternative strategy. 
Roughly speaking, the idea is to consider \eqref{eq:isom}
in the \emph{Kantorovich--Rubinstein duality}:
\[ 
W_1(\mu,\nu)
=
\sup \bigg\{ 
\int_X \varphi \,d\mu -\int_X \varphi \,d\nu
 \,\bigg|\, 
\varphi : X \to \R \text{, $1$-Lipschitz} 
\bigg\} 
\]
with some approximation.

\begin{proposition}[Proof of Theorem~\ref{th:dist}(i)] \label{pr:F-isom}
There exists $\widetilde{F}: \R \times X \lra X$ such that 
Theorem~$\ref{th:dist}${\rm (i)} holds. 
\end{proposition}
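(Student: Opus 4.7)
The plan is to extract the pointwise isometry from the $W_2$-isometry property (Lemma~\ref{lem:isom}) via Kantorovich--Rubinstein duality combined with the Sobolev-to-Lipschitz property of $\RCD(K,\infty)$-spaces. First I would extend Lemma~\ref{lem:isom} from measures with continuous bounded density to all of $\cP^2_{\ac}(X)$: given $\mu\in\cP^2_{\ac}(X)$, pick continuous-density approximations $\mu_n \to \mu$ in $W_2$; by Lemma~\ref{lem:isom}, $\{(F_t)_*\mu_n\}$ is $W_2$-Cauchy, and its weak limit is necessarily $(F_t)_*\mu$, yielding $W_2((F_t)_*\mu,(F_t)_*\nu)=W_2(\mu,\nu)$ on $\cP^2_{\ac}(X)$.

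Next, for a bounded $1$-Lipschitz $\varphi\colon X\to\R$, the composition $\varphi\circ F_t$, defined $\fm$-a.e., satisfies
\[
\bigg|\int (\varphi\circ F_t)\,d(\mu-\nu)\bigg|
 = \bigg|\int \varphi\,d\big((F_t)_*\mu-(F_t)_*\nu\big)\bigg|
 \le W_1\big((F_t)_*\mu,(F_t)_*\nu\big) \le W_2(\mu,\nu)
\]
by Kantorovich--Rubinstein and the extended isometry. The functional $\mathcal{G}(\mu):=\int(\varphi\circ F_t)\,d\mu$ is thus $W_2$-Lipschitz with constant at most $1$. Testing $\mathcal{G}$ along $W_2$-absolutely continuous curves and using the standard identification of the $W_2$-slope with the $L^2(\mu)$-norm of the gradient, one gets $\int |\nabla(\varphi\circ F_t)|^2 \rho\, d\fm \le 1$ for all admissible probability densities $\rho$. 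Approximating $\chi_A/\fm(A)$ (for an arbitrary Borel set $A$ with $0<\fm(A)<\infty$) by bounded continuous densities gives $\int_A|\nabla(\varphi\circ F_t)|^2\,d\fm \le \fm(A)$ and hence $|\nabla(\varphi\circ F_t)|\le 1$ $\fm$-a.e. The Sobolev-to-Lipschitz property then produces a $1$-Lipschitz representative $\widehat{\varphi}_t$ of $\varphi\circ F_t$.

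Fix a countable dense set $\{z_n\}\subset X$ and apply the above to the bounded truncations $\varphi_{n,R}(\cdot):=\min\{d(\cdot,z_n),R\}$, which are $1$-Lipschitz. On the complement of a $\fm$-null set $N_t$ (the countable union of the exceptional sets for each pair $(n,R)$), the $1$-Lipschitz representatives coincide with $\varphi_{n,R}\circ F_t$, so
\[
|d(F_t(x),z_n)-d(F_t(y),z_n)| \le d(x,y) \quad\text{for all } x,y\notin N_t \text{ and all } n.
\]
Taking $z_n\to F_t(x)$ yields $d(F_t(x),F_t(y))\le d(x,y)$; applying the same argument to the negative-time flow and using $F_{-t}\circ F_t=\mathrm{id}$ $\fm$-a.e.\ from Theorem~\ref{th:RLF}(iv) promotes this to equality on the complement of a slightly larger null set $\widetilde{N}_t$. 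Since $X\setminus\widetilde{N}_t$ is dense, the distance-preserving restriction extends uniquely by continuity to an isometry $\widetilde{F}_t\colon X\to X$. To secure a single exceptional set valid for \emph{all} $t$ simultaneously, I would perform this construction for $t$ in a countable dense $D\subset\R$, take the union of the associated null sets, intersect with the full-measure set on which $t\mapsto F_t(x)$ is $1$-Lipschitz (Theorem~\ref{th:RLF}(ii)), and extend to every $t\in\R$ by continuity in $t$.

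The main obstacle will be the passage from the $W_2$-Lipschitz estimate for the functional $\mathcal{G}$ to the pointwise bound $|\nabla(\varphi\circ F_t)|\le 1$ $\fm$-a.e.: the classical route via Lebesgue differentiation is unavailable in the non-doubling setting, so the testing densities have to be constructed carefully (e.g.\ via heat-semigroup mollifications of indicators, Lipschitz by Proposition~\ref{pr:Lip}) and controlled sharply in $W_2$ in order to convert the functional inequality into the sought local $L^2$-estimate on $|\nabla(\varphi\circ F_t)|^2$.
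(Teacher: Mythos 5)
Your overall architecture matches the paper's: use Kantorovich--Rubinstein duality against the $W_2$-isometry of Lemma~\ref{lem:isom}, derive a $1$-Lipschitz estimate for compositions $\varphi\circ F_t$ with $1$-Lipschitz test functions $\varphi$, run this over a countable dense family of truncated distance functions, invoke the negative-time flow and $F_{-t}\circ F_t = \mathrm{id}$ a.e.\ to upgrade to equality, and extend $\widetilde F_t$ by continuity for rational and then all $t$. All of that agrees, and the preliminary extension of Lemma~\ref{lem:isom} to all of $\cP^2_{\ac}(X)$ is harmless but not actually needed, since the argument only requires $W_2$-isometry on measures with bounded continuous density.

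The step that does not go through as written is the conversion
\[
\text{``$\mathcal G$ is $1$-Lipschitz in $W_2$''} \ \Longrightarrow\ \int_X |\nabla(\varphi\circ F_t)|^2\,\rho\,d\fm\le 1 .
\]
To speak of $|\nabla(\varphi\circ F_t)|$ one first needs $\varphi\circ F_t\in W^{1,2}(X)$, yet establishing exactly this Sobolev regularity is the point of the argument; a priori $\varphi\circ F_t$ is only a bounded measurable function, since $F_t$ is at this stage just a measurable map. Even granting membership in $W^{1,2}$, passing from a metric-slope bound of the linear functional $\mathcal G(\mu)=\int g\,d\mu$ to the pointwise bound $|\nabla g|\le 1$ is not an off-the-shelf identification; it needs a duality result matched carefully to the non-doubling setting, and you concede this yourself in the final paragraph without actually producing the missing estimate. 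The Sobolev-to-Lipschitz property then cannot be invoked either, because its hypothesis $|\nabla(\varphi\circ F_t)|\le 1$ is precisely what has not been secured.

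The paper sidesteps all of this by mollifying the \emph{unknown function} rather than the testing densities: it sets $g_\ve := \sH_\ve(f\circ F_t)$, which is Lipschitz (Proposition~\ref{pr:Lip}) and in particular continuous, so the Kantorovich--Rubinstein inequality applied to measures $\mu_r,\nu_r$ concentrated near $x$ and $y$ yields, in the limit $r\downarrow 0$, the \emph{pointwise} estimate $|g_\ve(x)-g_\ve(y)|\le \e^{-\ve}d(x,y)$ directly. Letting $\ve\downarrow 0$ along an a.e.\ convergent subsequence gives $|f\circ F_t(x)-f\circ F_t(y)|\le d(x,y)$ for $\fm\otimes\fm$-a.e.\ $(x,y)$, from which the $1$-Lipschitz representative is obtained by restriction and continuous extension --- no weak upper gradient and no Sobolev-to-Lipschitz required. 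If you want to salvage your route, the mollification has to be applied to $\varphi\circ F_t$, at which point the argument reduces to the paper's.
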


\begin{proof}
Fix $t \in \R$ and a bounded $1$-Lipschitz function $f : X \lra \R$. 
We first show that $f \circ F_t$ has 
a $1$-Lipschitz representative 
in its $\fm$-almost everywhere equivalence class. 
In order to see this, we consider $g_\ve:=\sH_\ve(f\circ F_t)$ for $\ve>0$. 
Recall that $g_\ve$ is Lipschitz by Proposition~\ref{pr:Lip}.
Pick $x,y \in X$, $r>0$ and $\mu_r,\nu_r \in \cP^2_{\ac}(X)$ 
with bounded continuous density supported 
on $B_r (x)$ and $B_r (y)$, respectively. 
Then, since $f$ is $1$-Lipschitz,  
\begin{align} \nonumber
\bigg| \int_X g_\ve \,d\mu_r -\int_X g_\ve \,d\nu_r \bigg|
&=
\bigg| \int_X f \circ F_t \,d[\sH_\ve(\mu_r)] -\int_X f \circ F_t \,d[\sH_\ve(\nu_r)] \bigg| 
\\ \nonumber
&=
\bigg| \int_X f \,d[(F_t)_* \sH_\ve(\mu_r)] -\int_X f \,d[(F_t)_* \sH_\ve(\nu_r)] \bigg| 
\\ \nonumber
& \le
W_1 \big( (F_t)_* \sH_\ve(\mu_r),(F_t)_* \sH_\ve(\nu_r) \big)
\\ \label{eq:F-isom1}
& \le
W_2 \big( (F_t)_* \sH_\ve(\mu_r),(F_t)_* \sH_\ve(\nu_r) \big) .
\end{align}
We used the Kantorovich--Rubinstein duality and the H\"older inequality
to see the inequalities above.
Note that $\sH_\ve(\mu_r)$ and $\sH_\ve(\nu_r)$ 
also have bounded continuous density by Proposition~\ref{pr:Lip}. 
Thus it follows from Lemma~\ref{lem:isom} and the $W_2$-contraction property \eqref{eq:Wcont}
of the heat flow that
\[
W_2 \big( (F_t)_* \sH_\ve(\mu_r),(F_t)_* \sH_\ve(\nu_r) \big) 
 = 
W_2 \big( \sH_\ve(\mu_r), \sH_\ve(\nu_r) \big) 
 \le
\e^{-\ve} W_2(\mu_r,\nu_r).
\]
Combining this with \eqref{eq:F-isom1}
and letting $r\downarrow0$, 
we obtain 
\begin{equation} \label{eq:F-isom3}
| g_\ve (x) - g_\ve (y) | \leq \e^{-\ve} d(x,y). 
\end{equation}
Since $g_\ve$ converges to $f \circ F_t$ 
in $L^2 (X)$ as $\ve \downarrow 0$, 
by taking an almost everywhere converging subsequence, 
we obtain $| f \circ F_t (x) - f \circ F_t (y) | \le d ( x, y )$ 
for $\fm \otimes \fm$-almost every $(x,y)$ 
from \eqref{eq:F-isom3}. 
It implies our claim. 

We next show that, for each $t \in \R$, there 
exists a Borel $\fm$-negligible set $A \subset X$ 
such that the following holds: 
\begin{equation} \label{eq:F-isom4}
d\big( F_t (x) , F_t (y) \big) = d ( x, y ) 
\qquad \text{for any $x,y \in X \setminus A$.}
\end{equation}
Let $\{x_i\}_{i \in \N}$ be a countable dense subset in $X$ 
and let $f_i(x):=d(x,x_i)$. 
A truncation argument shows that we can remove 
the boundedness of $f$ from the assumption in the last claim. 
Thus we can apply it to $f_i$ to conclude that 
there exist a Borel $\fm$-negligible set $A \subset X$ such that 
$f_i \circ F_t$ is $1$-Lipschitz on $X \setminus A$ for all $i \in \N$. 
Thus we have
\[
d\big( F_t(x),F_t(y) \big)
 =\sup_i \big\{ f_i \big( F_t(x) \big) -f_i \big( F_t(y) \big) \big\}
 \le d(x,y)
\]
for all $x,y \in X \setminus A$,
which proves that the restriction of $F_t$ to $X \setminus A$ is $1$-Lipschitz.
Then, Theorem~\ref{th:RLF}(i), (iv) imply \eqref{eq:F-isom4} 
by exchanging $t$ with $-t$ in the above argument. 

Finally, we construct a modification $\widetilde{F}$ of $F$. 
From the last argument, there exists a Borel $\fm$-negligible 
subset $A \subset X$ such that \eqref{eq:F-isom4} holds 
for any $t \in \mathbb{Q}$. 
By Theorem~\ref{th:RLF}(ii), the same holds for any $t \in \R$. 
Then, for each $t \in \R$, we have the unique 
extension $\widetilde{F}_t$ of $F_t$ as an isometry. 
This completes the proof. 
$\qedd$
\end{proof}

We can also improve Theorem~\ref{th:affine} as follows.

\begin{proposition}[$u$ is affine]\label{pr:u-affine}
The function $u$ is affine in the sense that,
along any geodesic $\gamma:[0,1] \lra X$,
we have for all $t \in (0,1)$
\[ u\big( \gamma(t) \big)
 =(1-t)u\big( \gamma(0) \big) +tu\big( \gamma(1) \big). \]
\end{proposition}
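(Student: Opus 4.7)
The plan is to exploit the isometric flow $\widetilde F_t$ from Theorem~\ref{th:dist} together with the $0$-evolution variational equality for $\cU$ (Lemma~\ref{lem:w-EVI}) to derive an explicit quadratic identity for $d(\widetilde F_t(x),y)^2$, and then to combine that identity with the triangle inequality and the $1$-Lipschitz property of $u$ so as to force $u$ to interpolate linearly along every geodesic.

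The first and main step is to establish the distance identity
\[
d\bigl(\widetilde F_t(x),y\bigr)^2
 = d(x,y)^2 + 2t\bigl(u(y)-u(x)\bigr) + t^2
\qquad\text{for every } x,y\in X,\ t\in\R.
\]
For this I approximate $\delta_x$ in $W_2$ by a sequence $(\mu^{(n)})_n\subset\cP^2_{\ac}(X)$ of bounded continuous density and bounded support, for instance $\mu^{(n)} \propto \max\{1/n - d(\cdot,x),0\}\,\fm$. Lemma~\ref{lem:w-EVI} applies to each $\mu^{(n)}$ with $\nu=\delta_y$, so integrating the equality from $0$ to $t$ yields
\[
\tfrac{1}{2}\bigl(W_2^2((F_t)_*\mu^{(n)},\delta_y)-W_2^2(\mu^{(n)},\delta_y)\bigr)
 = \int_0^t\!\bigl(u(y)-\cU((F_s)_*\mu^{(n)})\bigr)\,ds.
\]
Letting $n\to\infty$, the $W_2$-isometry of $\widetilde F_s$ (Proposition~\ref{pr:F-isom}) gives $(F_s)_*\mu^{(n)}\to\delta_{\widetilde F_s(x)}$ in $W_2$, while the continuity and linear growth of $u$ turn the integrand into $u(y)-u(x)+s$; by dominated convergence the claimed identity follows.

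With the identity in hand, fix a geodesic $\gamma:[0,1]\lra X$ with $x:=\gamma(0)$, $y:=\gamma(1)$, $L:=d(x,y)$, set $\alpha:=u(y)-u(x)$, and for $t\in(0,1)$ put $\beta:=u(\gamma(t))-u(x)$. Consider the auxiliary points
\[
p := \widetilde F_{-\beta}(x),\qquad q := \widetilde F_{\alpha-\beta}(y),
\]
which both carry the same $u$-value as $\gamma(t)$. Direct applications of the quadratic identity give
\[
d(p,\gamma(t))^2 = t^2 L^2-\beta^2,\quad
d(\gamma(t),q)^2 = (1-t)^2 L^2-(\alpha-\beta)^2,\quad
d(p,q)^2 = L^2-\alpha^2.
\]

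Finally, the $1$-Lipschitz bound $|\beta|\le tL$, $|\alpha-\beta|\le(1-t)L$ gives $t(1-t)L^2\ge\beta(\alpha-\beta)$, so both sides of the triangle inequality $d(p,q)\le d(p,\gamma(t))+d(\gamma(t),q)$ are non-negative and may be squared. Setting $a:=tL$, $c:=(1-t)L$, $A:=\beta$, $C:=\alpha-\beta$ and invoking the algebraic identity $(a^2-A^2)(c^2-C^2)=(ac-AC)^2-(aC-cA)^2$, the squared triangle inequality collapses to $(aC-cA)^2\le 0$. Hence $aC=cA$, that is $\beta=t\alpha$, which is exactly $u(\gamma(t))=(1-t)u(\gamma(0))+tu(\gamma(1))$. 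The main technical difficulty lies in the approximation procedure used to produce the quadratic distance identity; once it is available, the remainder is a short rigidity argument built on the triangle inequality.
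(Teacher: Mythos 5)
Your argument is correct, and the first half of it — approximate $\delta_x$ and $\delta_y$ by bounded-support densities, integrate the $0$-EVE \eqref{eq:w-evi} over $[0,t]$, then pass to the limit using the $W_2$-isometry of $\widetilde F_s$ and the $W_1$-Lipschitz continuity of $\cU$ — is essentially what the paper does to transfer the evolution variational equality from $\cU$ on $\cP^2(X)$ down to $u$ on $X$. Where you genuinely diverge is the final step. The paper records the outcome as the $0$-EVE for the curve $\eta(t)=\widetilde F_t(x)$ and then appeals to the standard fact that the existence of $0$-EVI gradient flows (for both $u$, via $\widetilde F_t$, and for $-u$, via $\widetilde F_{-t}$) forces $u$ and $-u$ to be geodesically convex, hence $u$ is affine. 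You instead integrate once more (using $u(\widetilde F_s(x))=u(x)-s$) to arrive at the closed-form identity $d(\widetilde F_t(x),y)^2 = d(x,y)^2 + 2t(u(y)-u(x))+t^2$, and then run a self-contained rigidity argument: choosing the auxiliary points $p=\widetilde F_{-\beta}(x)$, $q=\widetilde F_{\alpha-\beta}(y)$ on the common level set $u^{-1}(u(\gamma(t)))$, the three distances $d(p,\gamma(t))$, $d(\gamma(t),q)$, $d(p,q)$ are all pinned down by the quadratic identity, and the triangle inequality collapses — via the factorization $(a^2-A^2)(c^2-C^2)=(ac-AC)^2-(aC-cA)^2$ — to $(aC-cA)^2\le 0$, yielding $\beta=t\alpha$. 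The checks I made (the three distance evaluations, the nonnegativity of $ac-AC$ from the $1$-Lipschitz bound, and the case $L=0$) all go through. The paper's route is shorter but imports the EVI-to-convexity machinery; yours is longer but completely elementary once the quadratic identity is available, and has the pleasant side effect of producing the explicit formula for $d(\widetilde F_t(x),y)$, which the paper only uses implicitly.
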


\begin{proof}
Let $x,y \in X$ and consider $\mu,\nu \in \cP^2_{\ac}(X)$
of bounded continuous density and bounded support
approximating the Dirac measures $\delta_x$ and $\delta_y$ in the sense of weak convergence, respectively. Consider the map $\tilde{F}:\mathbb{R}\times X\rightarrow X$ and 
define $\tilde{\mu}_t:=(\tilde{F}_t)_{\star}\mu$.
Since by the previous theorem for every $t\in\mathbb{R}$ $F_t=\tilde{F}_t$ $\fm$-a.e., we have $\mu_t=\tilde{\mu}_t$.
Then, by integrating \eqref{eq:w-evi} in $t$ we obtain
\begin{equation}
\frac{W_2^2(\tilde{\mu}_t,\nu)}{2}-\frac{W_2^2({\mu},\nu)}{2}
 = \cU ( \nu )t - \int_0^t\cU (\tilde{\mu}_{\tau} )d\tau
\end{equation}
Finally, since $\tilde{F}_t$ is continuous for every $t\in \mathbb{R}$, one can pass to the limit 
as $\mu \to \delta_x$ and $\nu \to \delta_y$.
We deduce that $\eta(t):=\tilde{F}_t(x)$ enjoys the 0-evolution variational equality for $u$:
\begin{equation}\label{eq:u-EVI}
\frac{d}{dt} \frac{d^2(\eta(t),y)}{2}
 = u(y) -u\big( \eta(t) \big).
\end{equation}
This implies that both $u$ and $-u$ are convex,
and hence affine.
$\qedd$
\end{proof}

The next proposition completes the proof of Theorem~\ref{th:dist}. 
The key fact in the proof is that $( \widetilde{F}_t (x) )_{t \in \R}$ 
provides the EVI-gradient flow of $u$ as we saw in Proposition~\ref{pr:u-affine}.

\begin{proposition}[Proof of Theorem~\ref{th:dist}(ii)] \label{pr:F-geod}
For each $x \in X$, the curve
$( \widetilde{F}_t (x) )_{t \in \R}$ is a minimal geodesic in $X$. 
\end{proposition}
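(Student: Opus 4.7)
Proof plan for Proposition~\ref{pr:F-geod}. The central idea is to combine the 0-evolution variational equality \eqref{eq:u-EVI} for $u$ established in Proposition~\ref{pr:u-affine} with the explicit computation of $u$ along the flow. Since \eqref{eq:u-EVI} holds for the curve $\eta(t)=\widetilde{F}_t(x)$ and for every $y\in X$, the natural test point to plug in is $y=\widetilde{F}_s(x)$ for a fixed $s\in\R$. Once we identify $u\circ\widetilde{F}_t$ explicitly, the right-hand side of \eqref{eq:u-EVI} becomes linear in $t$, and a single integration will give the desired distance relation $d(\widetilde{F}_t(x),\widetilde{F}_s(x))=|t-s|$.

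The first step is therefore to show that
\[
 u\big(\widetilde{F}_t(x)\big)=u(x)-t\qquad\text{for every }x\in X\text{ and }t\in\R.
\]
By Theorem~\ref{th:RLF}(iii) applied to $f=u$ (truncated against a cut-off if needed, since $u\in D(\Delta)$) and the normalization $|\nabla u|=1$ from \eqref{eq:|du|=1}, for $\fm$-almost every $x$ the map $t\longmapsto u(F_t(x))$ is locally absolutely continuous with derivative $-\langle\nabla u,\nabla u\rangle=-1$, hence $u(F_t(x))=u(x)-t$ on the full-measure set. I then extend the identity to every $x\in X$ by continuity: using $\supp\fm=X$ (a standing property of $\RCD$-spaces), pick $x_n\to x$ in the good set; since $\widetilde{F}_t$ is an isometry by Proposition~\ref{pr:F-isom} we have $\widetilde{F}_t(x_n)\to\widetilde{F}_t(x)$, and $u$ is $1$-Lipschitz, so passing to the limit in $u(\widetilde{F}_t(x_n))=u(x_n)-t$ gives the claim (the fact that $\widetilde{F}_t=F_t$ on a full-measure set is used to replace $u\circ\widetilde{F}_t$ with $u\circ F_t$ on the approximating sequence).

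Now fix $x,s$ and set $\eta(t):=\widetilde{F}_t(x)$. Applying \eqref{eq:u-EVI} with $y=\widetilde{F}_s(x)$ and substituting the formula from the previous step yields
\[
 \frac{d}{dt}\frac{d^2\big(\widetilde{F}_t(x),\widetilde{F}_s(x)\big)}{2}
 = u\big(\widetilde{F}_s(x)\big)-u\big(\widetilde{F}_t(x)\big)
 = t-s
\]
(the left-hand side is at least $2$-Lipschitz in $t$, and the right-hand side is continuous, so this holds in the classical sense for all $t$). Integrating from $s$ to $t$ gives $d^2(\widetilde{F}_t(x),\widetilde{F}_s(x))=(t-s)^2$, hence $d(\widetilde{F}_t(x),\widetilde{F}_s(x))=|t-s|$ for all $s,t\in\R$. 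This is exactly the statement that $(\widetilde{F}_t(x))_{t\in\R}$ is a unit-speed minimal geodesic; in particular it is continuous in $t$ as required.

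The main technical obstacle I anticipate is the rigorous extension of $u\circ\widetilde{F}_t=u(\cdot)-t$ from $\fm$-almost every $x$ to every $x$: one needs both the continuity of $\widetilde{F}_t$ (available by Proposition~\ref{pr:F-isom}) and the fact that the $\fm$-negligible exceptional set does not disconnect approximation, which relies on $\supp\fm=X$. Once this pointwise identity is secured, the derivation above is essentially mechanical, and no further self-improvement of the Bochner inequality or Wasserstein machinery is needed beyond what was already established in Proposition~\ref{pr:u-affine}.
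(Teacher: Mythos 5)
Your proof is correct and takes a genuinely different route from the paper's, which is worth remarking on. The paper's argument fixes a ``good'' point $x$ (where $F_t=\widetilde F_t$, property (v) of Theorem~\ref{th:RLF} holds, and $|\nabla^L u|\ge 1$ along the flow), invokes the nontrivial fact that EVI gradient flows satisfy the \emph{energy dissipation identity} to obtain the one-sided bound $u(\eta(t))-u(\eta(s))\le s-t$, pairs this with the trivial two-sided $1$-Lipschitz bound, and concludes $d(\eta(t),\eta(s))=|t-s|$ for that $x$, finally extending to all $x$ by continuity of $\widetilde F_t$. You instead observe that $u(\widetilde F_t(x))=u(x)-t$ can be established directly for \emph{every} $x$ from Theorem~\ref{th:RLF}(iii), $|\nabla u|\equiv 1$, and a density argument (this identity is in fact what the paper itself records after the proof, in Section~\ref{sc:isom}, citing the very same ingredients); then you feed $y=\widetilde F_s(x)$ back into the EVI \eqref{eq:u-EVI}, which makes the right-hand side exactly $t-s$, and integrate the resulting ODE for the locally Lipschitz function $t\mapsto d^2(\widetilde F_t(x),\widetilde F_s(x))/2$ with the obvious initial condition at $t=s$. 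Your approach is cleaner in that it avoids the EDI machinery and the delicate measure-theoretic choice of a good starting point; what the paper's version buys is that it never needs the pointwise identity $u\circ\widetilde F_t=u(\cdot)-t$ in full, only an upper bound along a.e.\ flow line, and it makes explicit the interplay between $|\nabla^L u|$ and the metric speed that the EDI records. Two small remarks: (a) $u\in D(\Delta)\subset W^{1,2}(X)$ already, so no cut-off is needed to invoke Theorem~\ref{th:RLF}(iii) for $f=u$; (b) when you evaluate $\langle\nabla u,\nabla u\rangle(F_t(x))$ one must choose the Borel representative $\equiv 1$ (legitimate since $|\nabla u|^2=1$ $\fm$-a.e.), or alternatively use $(F_t)_*\fm\ll\fm$ plus Fubini as the paper does; either route closes the gap. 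With these cosmetic points addressed, your argument is complete and valid.
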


\begin{proof}
Take $x \in X$ to be a point 
such that $F_t (x) = \widetilde{F}_t (x)$ for all $t \in \R$,
the property in Theorem~\ref{th:RLF}(v) holds,
and that $|\nabla^L u|(F_t(x)) \ge |\nabla u|(F_t(x))=1$ for almost every $t \in \R$.
Notice that the validity of the last property is ensured by 
Theorem~\ref{th:RLF}(i) and the Fubini theorem for localized measures. 
Recall from the proof of Proposition~\ref{pr:u-affine} that
$\eta(t):=\tilde{F}_t(x)$ enjoys the $0$-evolution variational equality \eqref{eq:u-EVI} for $u$.
On the one hand,
since EVI-gradient flows are gradient flows also in the sense of 
the \emph{energy dissipation identity} (the proof of this fact, due to Savar\'e, can be found in \cite{AG}), 
we have for every $s<t$
\[
u \big( \eta(t) \big) - u \big( \eta(s) \big) 
= 
- \frac{1}{2} \int_s^t \big\{ |\nabla^L u|^2 \big( \eta(r) \big) +|\dot{\eta}|^2(r) \big\} \,dr
\le s-t,
\]
where we used Theorem~\ref{th:RLF}(v) to see $|\dot{\eta}|=1$.
On the other hand,
\[
\big| u\big( F_t (x)) - u(F_s (x) \big) \big| \le d\big( F_t (x), F_s (x) \big) \le |t-s|
\]
holds since $u$ and $(F_t(x))_{t \in \R}$ are 1-Lipschitz,
and thus $d ( F_t (x), F_s (x) ) = |t - s|$ 
for every $t, s \in \R$. 
This forces the curve 
$( F_t(x) )_{t \in \R} = ( \widetilde{F}_t (x) )_{t \in \R}$ 
to be a minimal geodesic (straight line) in $X$. 
Since $\widetilde{F}_t$ is a continuous map on $X$ 
for each $t$ by Proposition~\ref{pr:F-isom}, 
$( \widetilde{F}_t (x) )_{t \in \R}$ must be a geodesic for every $x \in X$. 
$\qedd$
\end{proof}


\section{Third step: Isometric splitting}\label{sc:isom}

The properties of the gradient flow $({\tilde{F}}_t)_{t \in \R}$
of the eigenfunction $-u$ obtained in the previous section
allow us to follow the strategy of the splitting theorem in \cite{Gi-split,Gi-ov} to a large extent.

Set $Y:=u^{-1}(0)$.
The affine property of $u$ (Proposition~\ref{pr:u-affine}) implies that
$Y$ is totally geodesic in the sense that
any geodesic connecting two points in $Y$ is contained in $Y$.
Thus the distance $d_Y:=d|_{Y \times Y}$ on $Y$
defined as the restriction is geodesic.
We would like to compare $X$ and $Y \times \R$.
To this end, we define the maps
\begin{align*}
\pi &:X \ni x \,\longmapsto\, {\tilde{F}}_{u(x)} (x) \in Y, \\
\Phi &:X \ni x \,\longmapsto\, \big( \pi(x),-u(x) \big) \in Y \times \R, \\
\Psi &:Y \times \R \ni (y,t) \,\longmapsto\, {\tilde{F}}_t (y) \in X.
\end{align*}
Notice that $\pi$ is well-defined since $u({\tilde{F}_{t}}(x))=u(x)-{t}$ 
{for $x \in X$ and $t \in \R$} by Theorem~\ref{th:RLF}(iii) 
{and Theorem~\ref{th:dist}}.
We have by construction $\Psi=\Phi^{-1}$.
We first prove an important property of the map $\pi$
along the strategy in \cite[Corollary~5.19]{Gi-split} (see also \cite[Corollary~4.6]{Gi-ov}).

\begin{lemma}\label{lm:pi-1lip}
The map $\pi$ is $1$-Lipschitz
\end{lemma}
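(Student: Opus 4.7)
The key ingredient is the $0$-evolution variational equality \eqref{eq:u-EVI} established in Proposition~\ref{pr:u-affine}, which holds for any flow curve $t \mapsto \widetilde{F}_t(x)$ against any comparison point $y \in X$. My plan is to apply this EVI twice---once along the flow line of $x$ and then along the flow line of $y$---in order to derive the Pythagorean-type identity
\[
d^2\bigl(\pi(x),\pi(y)\bigr) = d^2(x,y) - \bigl(u(x)-u(y)\bigr)^2,
\]
from which the $1$-Lipschitz property of $\pi$ is immediate.

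Fix $x,y \in X$ and set $\eta(t):=\widetilde{F}_t(x)$. Since $\widetilde{F}_t$ is an isometry by Theorem~\ref{th:dist}(i) and $\eta$ has unit speed by Theorem~\ref{th:RLF}(v), the function $t \mapsto d^2(\eta(t),y)$ is locally Lipschitz in $t$, hence absolutely continuous. Using $u(\eta(t))=u(x)-t$ from Theorem~\ref{th:RLF}(iii), the EVI \eqref{eq:u-EVI} rewrites as
\[
\frac{d}{dt} \frac{d^2(\eta(t),y)}{2} = u(y)-u(x)+t \quad \text{for a.e.\ } t,
\]
and integrating from $0$ to $u(x)$ (noting $\eta(u(x))=\pi(x)$) gives
\[
d^2\bigl(\pi(x),y\bigr) = d^2(x,y) + 2u(x)u(y) - u(x)^2.
\]

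I would then run the same argument along the flow line of $y$, but with $\pi(x)$ as the comparison point. Setting $\zeta(t):=\widetilde{F}_t(y)$ and using $u(\pi(x))=0$, the EVI \eqref{eq:u-EVI} yields $\tfrac{d}{dt}\tfrac{d^2(\pi(x),\zeta(t))}{2} = t-u(y)$ for a.e.\ $t$; integrating from $0$ to $u(y)$ (so that $\zeta(u(y))=\pi(y)$) produces $d^2(\pi(x),\pi(y)) = d^2(\pi(x),y) - u(y)^2$. Substituting the first formula collapses the right-hand side to $d^2(x,y) - (u(x)-u(y))^2 \le d^2(x,y)$, which is the claim.

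No serious obstacle is anticipated: the two integrations are elementary once \eqref{eq:u-EVI} is in hand for arbitrary pairs $(x,y) \in X\times X$, which the approximation argument in the proof of Proposition~\ref{pr:u-affine} already provides. The only delicate point is that \eqref{eq:u-EVI} is an a.e.\ identity in $t$, but the absolute continuity noted above allows the fundamental theorem of calculus to be applied directly, so no further measure-theoretic care is required.
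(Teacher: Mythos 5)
Your argument is correct and takes a genuinely different, and in fact shorter, route than the paper's proof. The paper works at the level of measures: it fixes $y\in Y$, considers $\mu_t=(\widetilde{F}_t)_*\mu$ for an absolutely continuous $\mu$, locates the $t_0$ minimizing $t\mapsto W_2^2(\mu_t,\delta_y)$, analyzes the Wasserstein geodesic from $\mu_{t_0}$ to $\delta_y$ and the Kantorovich potential $\varphi=d^2(\cdot,y)/2$ to conclude that $\int u\,d\mu_{t_0}=0$, and finally sends $\mu\to\delta_x$ to get $d(\pi(x),y)\le d(x,y)$ for $y\in Y$; the general case then follows by the isometry of $\widetilde{F}_t$. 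You bypass all of this by invoking the pointwise $0$-evolution variational equality \eqref{eq:u-EVI}, which the paper has already established for arbitrary $x,y\in X$ (in the proof of Proposition~\ref{pr:u-affine}), substituting $u(\widetilde{F}_t(x))=u(x)-t$, and integrating twice --- once along the flow line of $x$ from $t=0$ to $t=u(x)$, once along that of $y$ from $0$ to $u(y)$ --- to arrive at the exact Pythagorean identity
\[
d^2\big(\pi(x),\pi(y)\big)=d^2(x,y)-\big(u(x)-u(y)\big)^2 .
\]
This is cleaner than the paper's argument and actually yields more (equality, not just the $1$-Lipschitz inequality); the identity is moreover consistent with the eventual splitting $d^2=d_Y^2+|s-t|^2$. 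Your technical remarks are all in order: $t\mapsto d^2(\widetilde{F}_t(x),y)$ is locally Lipschitz since $\widetilde{F}_\cdot(x)$ has unit speed (Theorem~\ref{th:RLF}(v)) and $d(\cdot,y)$ is $1$-Lipschitz, so the fundamental theorem of calculus applies to the a.e.\ derivative in \eqref{eq:u-EVI}; and $u(\pi(x))=0$ by definition of $\pi$, so the second integration has the form you wrote. The only point you should make explicit is that \eqref{eq:u-EVI} is stated for the continuous representative $\widetilde{F}_t$ (not $F_t$), which is what allows the two integrations along actual curves starting at $x$ and at $y$; but that is exactly how Proposition~\ref{pr:u-affine} records the statement, so there is no gap.
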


\begin{proof}
Since ${\tilde{F}}_t$ is isometric for each $t \in \R$,
we find $d(x,x') =d({\tilde{F}}_{u(x')}(x),\pi(x'))$.
Thus it is sufficient to show
$d(\pi(x),y) \le d(x,y)$ for $x \in X$ and $y \in Y$.
Fix $y \in Y$ and $\mu \in \cP^2_{\ac}(X)$ with bounded density,
and consider $\mu_t:=({\tilde{F}}_t)_* \mu$.
Take $t_0 \in \R$ attaining the minimum of the function $t \longmapsto W_2^2(\mu_t,\delta_y)$.
Let $(\nu_s)_{s \in [0,1]}$ be the minimal geodesic from $\mu_{t_0}$ to $\delta_y$.
Then, for every $s \in (0,1)$ and $t \in \R$, we find
\begin{align*}
W_2(\nu_s,\delta_y)
&=(1-s) W_2(\mu_{t_0},\delta_y)
 \le (1-s) W_2\big( ({\tilde{F}}_t)_* \mu_{t_0},\delta_y \big) \\
&\le (1-s) \big\{ W_2 \big( ({\tilde{F}}_t)_* \mu_{t_0},({\tilde{F}}_t)_* \nu_s \big) +W_2\big( ({\tilde{F}}_t)_* \nu_s,\delta_y \big) \big\} \\
&= (1-s) \big\{ W_2(\mu_{t_0},\nu_s) +W_2\big( ({\tilde{F}}_t)_* \nu_s,\delta_y \big) \big\} \\
&= s W_2(\nu_s,\delta_y) +(1-s) W_2\big( ({\tilde{F}}_t)_* \nu_s,\delta_y \big).
\end{align*}
Thus $W_2(({\tilde{F}}_t)_* \nu_s,\delta_y)$ attains the minimum at $t=0$.

Put $\varphi(x):=d^2(x,y)/2$
which is a Kantorovich potential for $(\nu_s,\delta_y)$ for all $s$.
Then it follows from \eqref{eq:dW/dt} that
\[ 0=\frac{d}{dt} \frac{W_2^2(({\tilde{F}}_t)_* \nu_s,\delta_y)}{2} \bigg|_{t=0}
 =-\int_X \langle \nabla u,\nabla \varphi \rangle \,d\nu_s \]
for all $s \in [0,1]$.
This yields (by \cite[Proposition~5.15]{Gi-split})
\[ \lim_{h \downarrow 0} \frac{1}{h}
 \bigg\{ \int_X u \,d\nu_{s+h} -\int_X u \,d\nu_s \bigg\}
 = -\frac{1}{1-s} \int_X \langle \nabla u,\nabla \varphi \rangle \,d\nu_s=0, \]
therefore
\[ \int_X u \,d\mu_{t_0} =\int_X u \,d\nu_0
 =\lim_{s \uparrow 1} \int_X u \,d\nu_s =u(y)=0. \]
This means that, by taking $\mu$ converging to $\delta_x$,
the minimum of $t \longmapsto d({\tilde{F}}_t(x),y)$ is attained at $t=u(x)$.
Hence we have $d(\pi(x),y) \le d(x,y)$.
$\qedd$
\end{proof}

On $Y \times \R$ let us consider the $L^2$-product distance:
\[ \hat{d} \big( (y_1,s),(y_2,t) \big) :=\sqrt{d_Y^2(y_1,y_2) +|s-t|^2}
 \qquad \text{for}\ (y_1,s),(y_2,t) \in Y \times \R. \]
Then it is easily seen that $\Phi$ and $\Psi$ are Lipschitz,
thus they give a bi-Lipschitz homeomorphism
(see \cite[Proposition~5.26]{Gi-split}, \cite[Proposition~4.9]{Gi-ov}).

\begin{lemma}[$\Phi$ and $\Psi$ are Lipschitz]\label{lm:biLip}
For any $(y_1,s),(y_2,t) \in Y \times \R$, we have
\[ \frac{1}{2} \hat{d}^2\big( (y_1,s),(y_2,t) \big)
 \le d^2 \big( \Psi(y_1,s),\Psi(y_2,t) \big)
 \le 2\hat{d}^2 \big( (y_1,s),(y_2,t) \big). \]
\end{lemma}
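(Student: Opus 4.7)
The plan is to establish the two inequalities separately, using only the structural results already assembled: that each $\widetilde{F}_t$ is an isometry of $(X,d)$ (Theorem~\ref{th:dist}(i)), that $t \longmapsto \widetilde{F}_t(y)$ is a unit-speed minimal geodesic (Theorem~\ref{th:dist}(ii) together with Theorem~\ref{th:RLF}(v)), that $u$ is $1$-Lipschitz, and that the projection $\pi$ is $1$-Lipschitz (Lemma~\ref{lm:pi-1lip}). Both inequalities reduce to writing out $\Psi(y_i,\tau_i)=\widetilde{F}_{\tau_i}(y_i)$ and comparing distances through these two Lipschitz estimates.

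For the upper bound I would apply the triangle inequality through the intermediate point $\widetilde{F}_s(y_2)$, obtaining
\[
 d\bigl(\widetilde{F}_s(y_1),\widetilde{F}_t(y_2)\bigr)
 \le d\bigl(\widetilde{F}_s(y_1),\widetilde{F}_s(y_2)\bigr) + d\bigl(\widetilde{F}_s(y_2),\widetilde{F}_t(y_2)\bigr)
 = d_Y(y_1,y_2) + |s-t|,
\]
where the first term uses that $\widetilde{F}_s$ is an isometry and that $\widetilde{F}_s$ maps $Y$ into the image of a totally geodesic leaf (so $d$ restricted there agrees with $d_Y$), and the second term uses the unit-speed property. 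Squaring and applying $(a+b)^2 \le 2(a^2+b^2)$ gives the right-hand inequality.

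For the lower bound I would project back through $\pi$ and through $u$. Because $u(y_i)=0$ and $u(\widetilde{F}_\tau(x))=u(x)-\tau$, one computes $\pi(\widetilde{F}_s(y_1))=\widetilde{F}_{-s}\circ\widetilde{F}_s(y_1)=y_1$ and $\pi(\widetilde{F}_t(y_2))=y_2$. The $1$-Lipschitz continuity of $\pi$ then yields $d_Y(y_1,y_2) \le d(\widetilde{F}_s(y_1),\widetilde{F}_t(y_2))$, while the $1$-Lipschitz continuity of $u$ gives $|s-t| = |u(\widetilde{F}_s(y_1)) - u(\widetilde{F}_t(y_2))| \le d(\widetilde{F}_s(y_1),\widetilde{F}_t(y_2))$. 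Summing the squares of these two inequalities yields $d_Y^2(y_1,y_2)+|s-t|^2 \le 2\,d^2(\widetilde{F}_s(y_1),\widetilde{F}_t(y_2))$, which is the left-hand inequality.

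No step here looks genuinely hard once the isometric splitting of the flow and the Lipschitz property of $\pi$ are in hand; the only small care needed is in justifying that $d$ restricted to $Y$ really does agree with $d_Y$ in the upper-bound step, which is precisely the totally geodesic property of $Y$ recorded at the start of this section as a consequence of Proposition~\ref{pr:u-affine}. Note that sharpness of the factor $2$ comes from the triangle inequality being tight when $y_1=y_2$ or $s=t$, while the lower bound is tight along the coordinate directions; the gap between them reflects the fact that we have not yet identified $d$ with the $L^2$-product distance, which is the content of the subsequent arguments.
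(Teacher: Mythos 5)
Your proof is correct and follows essentially the same route as the paper: the upper bound via a triangle inequality through an intermediate flow point plus the elementary estimate $(a+b)^2 \le 2(a^2+b^2)$, and the lower bound by projecting through the $1$-Lipschitz maps $\pi$ and $u$. One small clarification: the identity $d\bigl(\widetilde{F}_s(y_1),\widetilde{F}_s(y_2)\bigr)=d_Y(y_1,y_2)$ needs only that $\widetilde{F}_s$ is an isometry of $(X,d)$ and that $d_Y$ is \emph{defined} as the restriction $d|_{Y\times Y}$; the totally geodesic property of $Y$ (which you flag as the point requiring care) is not used here at all --- it serves only to make $(Y,d_Y)$ a geodesic space, which is irrelevant to this lemma.
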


\begin{proof}
The first inequality follows from the fact that
both $\pi$ and $u$ are $1$-Lipschitz:
\begin{align*}
d^2 \big( \Psi(y_1,s),\Psi(y_2,t) \big)
 &\ge \max\big\{ d_Y^2 \big( \pi \circ \Psi(y_1,s),\pi \circ \Psi(y_2,t) \big),
 |u \circ \Psi(y_1,s) -u \circ \Psi(y_2,t)|^2 \big\} \\
 &= \max\big\{ d_Y^2(y_1,y_2),|s-t|^2 \big\}
 \ge \frac{1}{2} \Big( d_Y^2(y_1,y_2) +|s-t|^2 \Big).
\end{align*}
The second inequality is a consequence of the properties of ${\tilde{F}}_t$:
\begin{align*}
d \big( \Psi(y_1,s),\Psi(y_2,t) \big)
&= d \big( {\tilde{F}}_0(y_1),{\tilde{F}}_{t-s}(y_2) \big) \\
&\le d \big( {\tilde{F}}_0(y_1),{\tilde{F}}_0(y_2) \big) +d \big( {\tilde{F}}_0(y_2),{\tilde{F}}_{t-s}(y_2) \big) \\
&= d_Y(y_1,y_2) +|t-s|
 \le \sqrt{2 \Big( d_Y^2(y_1,y_2) +|t-s|^2 \Big)}.
\end{align*}
$\qedd$
\end{proof}

Define the measure $\fm_Y$ on $Y$ by
\[ \fm_Y(A) :=\lim_{\ve \to 0} \frac{\fm(\Psi(A \times [0,\ve]))}{\ve}. \]
By the relation $({\tilde{F}}_t)_* \fm=\e^{-tu-t^2/2} \,\fm$ obtained in Theorem~\ref{th:RLF},
we see that (recall also \eqref{eq:m_t}) the limit exists and
\begin{equation}\label{eq:m-split}
d[\Phi_* \fm] =d\fm_Y \times (\e^{-t^2/2} \,dt).
\end{equation}

What is remaining is the relation between $d$ on $X$ and $\hat{d}$ on $Y \times \R$.
We first observe the following by the same argument as
\cite[Corollary~5.30]{Gi-split}, \cite[Corollary~4.12]{Gi-ov}.

\begin{lemma}\label{lm:Y-RCD}
$(Y,d_Y,\fm_Y)$ satisfies $\RCD(1,\infty)$.
\end{lemma}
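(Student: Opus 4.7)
The plan is to transfer the $\RCD(1,\infty)$ structure from $X$ to $Y$ by exploiting the bi-Lipschitz splitting $\Psi : Y \times \R \to X$, together with the facts that $\widetilde{F}_t$ is an isometry (Theorem~\ref{th:dist}) and $\pi$ is $1$-Lipschitz (Lemma~\ref{lm:pi-1lip}), following the template of \cite{Gi-split}.

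For the $\CD(1,\infty)$ condition, given $\mu_0^Y, \mu_1^Y \in \cP^2_{\ac}(Y)$ with bounded densities and bounded supports, I would lift them via $\tilde\mu_i := \Psi_*(\mu_i^Y \otimes \gamma_0)$, where $\gamma_0(dt) := (2\pi)^{-1/2}\e^{-t^2/2}\,dt$ denotes the standard Gaussian probability. The decomposition \eqref{eq:m-split} delivers the identity
\[ \Ent_\fm(\tilde\mu_i) = \Ent_{\fm_Y}(\mu_i^Y) - \tfrac{1}{2}\log(2\pi). \]
The isometry of $\widetilde{F}_t$ makes the diagonal coupling along the Gaussian factor admissible and yields $W_2^{(X)}(\tilde\mu_0, \tilde\mu_1) \le W_2^{(Y)}(\mu_0^Y, \mu_1^Y)$, while $\pi_*\tilde\mu_i = \mu_i^Y$ combined with the $1$-Lipschitz continuity of $\pi$ gives the reverse inequality; hence the two Wasserstein distances coincide. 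Applying $\CD(1,\infty)$ on $X$ produces a $W_2$-geodesic $(\tilde\mu_t)_{t \in [0,1]}$ along which the relative entropy is $1$-convex, and $\mu_t^Y := \pi_* \tilde\mu_t$ is then a $W_2$-geodesic in $\cP^2(Y)$ with matching length. Granting that $\tilde\mu_t$ takes the product form $\Psi_*(\mu_t^Y \otimes \gamma_0)$, the displayed entropy identity propagates to every $t$ and transfers $1$-convexity to $\Ent_{\fm_Y}$ along $(\mu_t^Y)$.

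For infinitesimal Hilbertianity, I would derive a Cheeger energy splitting of the form
\[ 2\Ch_X(f) = \int_X |\partial_t f|^2\,d\fm + \int_X |\nabla_Y f|^2 \,d\fm \]
by decomposing Sobolev functions along the flow $\widetilde{F}_t$, which parametrizes the Gaussian $\R$-factor, and its transverse $Y$-factor; this uses \eqref{eq:m-split} together with the isometry of $\widetilde{F}_t$. Since $\Ch_X$ is quadratic by the $\RCD$ hypothesis on $X$ and the one-dimensional Gaussian Cheeger energy is manifestly quadratic, the residual $Y$-contribution $\Ch_Y$ must also satisfy the parallelogram identity \eqref{eq:intH}.

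The principal obstacle is the product-form claim for the $X$-geodesic between the lifts $\tilde\mu_0$ and $\tilde\mu_1$. I would resolve it by combining the uniqueness of $W_2$-geodesics between absolutely continuous measures on an $\RCD(1,\infty)$-space with the explicit construction of a candidate geodesic via lifting a $W_2$-geodesic in $\cP^2(Y)$ through $\Psi$ with the Gaussian factor held fixed; this is admissible because $Y$ is itself geodesic as a metric space, being totally geodesic in $X$ by the affineness of $u$ (Proposition~\ref{pr:u-affine}), and the diagonal-coupling calculation above certifies that the lift attains the $W_2$-distance in $X$ and hence is optimal.
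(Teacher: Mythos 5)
Your proposal follows essentially the same route as the paper: lift measures from $\cP^2(Y)$ to $\cP^2(X)$ by taking a product with a fixed reference measure on the $\R$-factor, check that this lift is a $W_2$-isometry (diagonal coupling for $\le$, the $1$-Lipschitz projection $\pi$ for $\ge$), and then transfer both the curvature condition and quadraticity of the Cheeger energy from $X$. A few small divergences are worth noting. The paper uses the lift $\Xi(\mu) = \Psi_*(\mu\times\cL^1|_{[0,1]})$ rather than your Gaussian factor $\gamma_0$; both give an additive entropy constant so either works. More substantively, the paper orients the geodesic-lifting the other way: it takes a $W_2$-geodesic $(\mu_t)$ in $\cP^2(Y)$, observes that since $\Xi$ is an isometry $\Xi(\mu_t)$ is an $X$-geodesic of product form by construction, and then invokes the \emph{strong} $K$-convexity of entropy on an $\RCD$-space along \emph{every} geodesic. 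This sidesteps entirely what you call the ``principal obstacle'' of identifying the product form of the $X$-geodesic between the lifted endpoints; your resolution via uniqueness of $W_2$-geodesics between absolutely continuous measures on $X$ is valid but is an extra step the paper avoids. Finally, for infinitesimal Hilbertianity the paper simply pulls back $\tilde f,\tilde g\in W^{1,2}_{\loc}(Y)$ to $X$ via $\pi$ and applies the parallelogram identity for $\Ch_X$, using \eqref{eq:m-split} to identify $\Ch_X(\tilde f\circ\pi)$ with a fixed multiple of $\Ch_Y(\tilde f)$. This is weaker (and cleaner) than the full Cheeger-energy tensorization you sketch; the latter, stated for general $f$, would essentially be the content of Proposition~\ref{pr:E=E}, which in the paper's logic comes \emph{after} this lemma and uses that $Y\times\R$ is $\RCD$, so as written your version risks circularity. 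Restricting to pullbacks $\tilde f\circ\pi$ (for which the $t$-derivative vanishes identically) is all that is needed and removes that concern.
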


\begin{proof}
First, in order to see the infinitesimal Hilbertianity,
let us extend $\tilde{f},\tilde{g} \in W^{1,2}_{\loc}(Y)$ to $X$ as
$f:=\tilde{f} \circ \pi, g:=\tilde{g} \circ \pi$, respectively.
Then $f,g \in W^{1,2}_{\loc}(X)$ and the infinitesimal Hilbertianity for $f,g$
and \eqref{eq:m-split} shows the claim.

Next, to prove $\CD(1,\infty)$, we consider the map
$\Xi: \cP^2(Y) \lra \cP^2(X)$ defined by
\[ 
\Xi(\mu) := 
\Psi_* ( \mu \times \cL^1|_{[0,1]} ), 
\]
where 
$\cL^1$ is the Lebesgue measure.
Then, since ${\tilde{F}}_t$ is isometric and $Y$ is totally geodesic, we deduce that
$\Xi$ is isometric (compare with the proof of Corollary 5.30 in \cite{Gi-split}) and, for any $\mu_0,\mu_1 \in \cP^2_{\ac}(Y)$
and the unique geodesic $(\mu_t)_{t \in [0,1]}$ between $\mu_0$ and $\mu_1$,
$\Xi(\mu_t)$ is being the minimal geodesic between $\Xi(\mu_0)$ and $\Xi(\mu_1)$.
Hence the curvature condition $\CD(1,\infty)$ of $(X,d,\fm)$
applied to $\Xi(\mu_t)$ implies $\CD(1,\infty)$ for $\mu_t$.
$\qedd$
\end{proof}

As a corollary to the lemma above, the product space
\[ \big( Y \times \R,\hat{d},d\fm_Y \times (\e^{-t^2/2} \,dt) \big) \]
again satisfies $\RCD(1,\infty)$.
The following energy identity is the key ingredient to show $d=\hat{d}$.
The proof follows the same line as \cite[Proposition~4.15]{Gi-ov}
and \cite[Proposition~6.5]{Gi-split},
we refer to those for the details of the discussion.

\begin{proposition}[Energy identity]\label{pr:E=E}
For all $f \in L^2(Y \times \R)$, we have
\[ \cE_X(f \circ \Phi) =\cE_{Y \times \R}(f). \]
\end{proposition}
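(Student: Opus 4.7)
The plan is to establish the energy identity first on a dense class of tensor product test functions and then extend by bilinearity and the infinitesimal Hilbertianity of both spaces. Since $(Y \times \R, \hat{d}, \fm_Y \otimes \e^{-t^2/2} dt)$ is an $\RCD(1,\infty)$-space by Lemma~\ref{lm:Y-RCD} and the stability of $\RCD(K,\infty)$ under products, and since $\Phi$ is bi-Lipschitz (Lemma~\ref{lm:biLip}) with $\Phi_* \fm = \fm_Y \otimes \e^{-t^2/2} dt$, both $f \circ \Phi \in W^{1,2}(X)$ and $f \in W^{1,2}(Y \times \R)$ hold simultaneously. By polarization, it therefore suffices to verify the identity on the dense class of functions $f(y,t) = g(y) h(t)$ with $g$ bounded Lipschitz on $Y$ of bounded support and $h \in C_c^\infty(\R)$.

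Two pointwise ingredients drive the computation. First, I would establish the orthogonality relation
\[
\langle \nabla(g \circ \pi), \nabla u \rangle = 0 \qquad \fm\text{-a.e.}
\]
This follows from $\pi \circ \tilde{F}_t = \pi$, which in turn is a direct consequence of the semigroup property of $\tilde{F}$ together with $u \circ \tilde{F}_t = u - t$. Applying Theorem~\ref{th:RLF}(iii) to $g \circ \pi \in W^{1,2}_{\loc}(X)$ gives $\frac{d}{dt}[(g \circ \pi)(F_t(x))] = -\langle \nabla(g \circ \pi), \nabla u\rangle(F_t(x))$ for $\fm$-a.e.\ $x$ and a.e.\ $t$, and the left-hand side vanishes by invariance. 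Combining this with the Leibniz and chain rules in the $\RCD$ framework together with $|\nabla u| = 1$, a direct calculation then yields
\[
|\nabla(f \circ \Phi)|^2 = h(-u)^2 \, |\nabla(g \circ \pi)|^2 + g(\pi)^2 \, h'(-u)^2 \qquad \fm\text{-a.e.}
\]

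The main technical obstacle is the pointwise identification $|\nabla(g \circ \pi)|(x) = |\nabla_Y g|(\pi(x))$ for $\fm$-a.e.\ $x$. The inequality $\le$ is immediate from the $1$-Lipschitz continuity of $\pi$ (Lemma~\ref{lm:pi-1lip}) and the standard bound of the minimal weak upper gradient by the local Lipschitz constant in the $\RCD$ setting. For the converse, the crucial observation is that $\tilde{F}_t$ is an isometry of $X$ (Theorem~\ref{th:dist}) and $(g \circ \pi) \circ \tilde{F}_t = g \circ \pi$; transporting the minimal weak upper gradient along this isometry shows that $|\nabla(g \circ \pi)|$ is invariant along the flow lines and therefore factors as $G \circ \pi$ for some measurable $G$ on $Y$. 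The definition of $\fm_Y$ as the slice measure, together with the fact that $Y$ is totally geodesic and embeds isometrically into $X$, then allows the identification $G = |\nabla_Y g|$ $\fm_Y$-a.e., consistently with the $\RCD(1,\infty)$ calculus on $(Y, d_Y, \fm_Y)$ provided by Lemma~\ref{lm:Y-RCD}.

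With the pointwise identity in hand, the proof concludes by integration against $\fm$, using $\Phi_* \fm = \fm_Y \otimes \e^{-t^2/2} dt$ and Fubini's theorem:
\begin{align*}
2 \cE_X(f \circ \Phi)
&= \int_X h(-u)^2 \, |\nabla(g \circ \pi)|^2 \, d\fm + \int_X g(\pi)^2 \, h'(-u)^2 \, d\fm \\
&= \int_{Y \times \R} \bigl( h(t)^2 \, |\nabla_Y g|^2(y) + g(y)^2 \, h'(t)^2 \bigr) \, d\fm_Y(y) \, \e^{-t^2/2} dt \\
&= 2 \cE_{Y \times \R}(f),
\end{align*}
where the final equality uses that on the product $(Y \times \R, \hat{d}, \fm_Y \otimes \e^{-t^2/2} dt)$ the Cheeger energy of a tensor product decomposes as $h^2 \, |\nabla_Y g|^2 + g^2 \, (h')^2$. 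A standard approximation argument extends the identity from the tensor-product class to all of $W^{1,2}(Y \times \R)$.
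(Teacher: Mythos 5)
Your approach is essentially the same as the paper's: reduce to a dense class of tensor products, prove the orthogonality $\langle\nabla(g\circ\pi),\nabla u\rangle=0$ via Theorem~\ref{th:RLF}(iii), identify the ``sliced'' gradient, and integrate using $\Phi_*\fm=\fm_Y\times\e^{-t^2/2}\,dt$. Two points, however, deserve flagging. First, a minor structural issue: reducing to \emph{single} tensors $f=gh$ and invoking polarization is not quite enough --- simple tensors are not dense in $W^{1,2}(Y\times\R)$, so you really need the identity on \emph{finite sums} $\sum_i g_i h_i$ (equivalently, the bilinear version on pairs of tensors, which your pointwise orthogonality would in fact deliver); the paper works with such sums from the start. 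Second and more substantively, for the identity $|\nabla(g\circ\pi)|=|\nabla_Y g|\circ\pi$ you correctly identify the $\ge$ direction as the main obstacle, but the argument you give --- flow-invariance implies $|\nabla(g\circ\pi)|=G\circ\pi$ for some $G$, and then ``the definition of $\fm_Y$ as the slice measure \dots\ allows the identification $G=|\nabla_Y g|$'' --- is a wish, not a proof. Comparing the minimal weak upper gradient in the ambient space $(X,d,\fm)$ with that in the slice $(Y,d_Y,\fm_Y)$ is precisely the delicate disintegration/test-plan argument carried out in \cite{Gi-split}; the paper sidesteps this by simply citing Gigli's splitting theorem for the identity $|\nabla g|_{Y\times\R}\circ\Phi=|\nabla(g\circ\Phi)|$, whereas your self-contained sketch leaves this step without a genuine justification.
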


\begin{proof}
By the density reasons (for instance, compare with \cite{GH}),
it is sufficient to show the claim for functions of the form
\[ f=\sum_{i \in I} g_i h_i \]
for a finite set $I$ and $g_i \in \mathscr{G}, h_i \in \mathscr{H}$,
where
\begin{align*}
\mathscr{G} &:=\{ g:Y \times \R \lra \R \,|\,
 g(y,t)=\tilde{g}(y)\ \text{for some}\ \tilde{g} \in W^{1,2} \cap L^{\infty}(Y) \}, \\
\mathscr{H} &:=\{ h:Y \times \R \lra \R \,|\,
 h(y,t)=\tilde{h}(t)\ \text{for some}\ \tilde{h} \in W^{1,2} \cap L^{\infty}(\R) \}.
\end{align*}
Recalling that $Y \times \R$ is an $\RCD(1,\infty)$-space,
we expand $|\nabla f|_{Y \times \R}^2$ as
\[ |\nabla f|_{Y \times \R}^2
 = \sum_{i,j \in I} \Big\{ g_i g_j \langle \nabla h_i,\nabla h_j \rangle_{Y \times \R}
 +2g_i h_j \langle \nabla h_i,\nabla g_j \rangle_{Y \times \R} \\
 +h_i h_j \langle \nabla g_i,\nabla g_j \rangle_{Y \times \R} \Big\}. \]
In order to compare this with the same decomposition of $f \circ \Phi$,
notice that by the very same arguments used in Gigli's proof of the splitting theorem \cite{Gi-split} we have that
\[ |\nabla g|_{Y \times \R} \circ \Phi =|\nabla (g \circ \Phi)|
 \qquad \text{$\fm$-almost everywhere} \]
for all $g \in \mathscr{G}$ and, similarly,
\[ |\nabla h|_{Y \times \R} \circ \Phi =|\nabla (h \circ \Phi)|
 \qquad \text{$\fm$-almost everywhere} \]
for all $h \in \mathscr{H}$.
Thus we have
\begin{align*}
\langle \nabla g_i,\nabla g_j \rangle_{Y \times \R} 
 \circ \Phi
& =
\langle 
  \nabla ( g_i \circ \Phi ) , 
  \nabla ( g_j \circ \Phi ) 
\rangle_X, 
\\
\langle \nabla h_i,\nabla h_j \rangle_{Y \times \R}
 \circ \Phi
& =
\langle 
  \nabla ( h_i \circ \Phi ) , 
  \nabla ( h_j \circ \Phi ) 
\rangle_X 
\end{align*}
$\fm$-almost everywhere by polarization.

Now it suffices to prove that, for any $g \in \mathscr{G}$ and $h \in \mathscr{H}$,
\begin{align}
\langle \nabla g,\nabla h \rangle_{Y \times \R}=0
 \qquad & (\fm_Y \times \cL^1)\text{-almost everywhere},
 \label{eq:E=E1}\\
\langle \nabla (g \circ \Phi),\nabla (h \circ \Phi) \rangle_X=0
 \qquad & \fm\text{-almost everywhere}.
 \label{eq:E=E2}
\end{align}
The former relation \eqref{eq:E=E1} follows from the product structure of $Y \times \R$,
see \cite[Theorem~5.1]{AGS-be}.
In order to see the latter \eqref{eq:E=E2},
let us take $\tilde{h} \in W^{1,2} \cap L^{\infty}(\R)$ with $h(y,t)=\tilde{h}(t)$
and notice by the definition of $\Phi$ that $h \circ \Phi =\tilde{h} \circ (-u)$.
Hence
\[ \langle \nabla(g \circ \Phi),\nabla(h \circ \Phi) \rangle_X
 =-\tilde{h}' \circ (-u) \cdot \langle \nabla(g \circ \Phi),\nabla u \rangle_X. \]
Then, for $\fm$-almost every $x \in X$,
we deduce from Theorem~\ref{th:RLF}(iii) that
\[ \langle \nabla(g \circ \Phi),\nabla u \rangle_X \big( {\tilde{F}}_t(x) \big)
 =-\frac{d}{dt} \big[ (g \circ \Phi) \big( {\tilde{F}}_t(x) \big) \big]
 =-\frac{d}{dt} \big[ \tilde{g}(x) \big] =0 \]
in the distributional sense in $t \in \R$,
where $\tilde{g} \in W^{1,2} \cap L^{\infty}(Y)$ satisfies $g(y,t)=\tilde{g}(y)$.
(To be precise, we cut-off $g \circ \Phi$ to be in $W^{1,2}(X)$
when we apply Theorem~\ref{th:RLF}(iii).)
This completes the proof of \eqref{eq:E=E2} and then the claim.
$\qedd$
\end{proof}

\begin{theorem}[Isometric splitting]\label{th:isom}
The maps $\Phi$ and $\Psi$ are isometric.
\end{theorem}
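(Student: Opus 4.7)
The plan is to upgrade the energy identity of Proposition~\ref{pr:E=E} into an isometry statement using the Sobolev-to-Lipschitz property, which is available on both $(X,d,\fm)$ and on the product $\RCD(1,\infty)$-space $(Y \times \R,\hat{d},\fm_Y \times \e^{-t^2/2}dt)$ (recall Lemma~\ref{lm:Y-RCD} and its corollary). The bi-Lipschitz bounds from Lemma~\ref{lm:biLip} already ensure that $\Phi$ and $\Psi$ are homeomorphisms, so the only task is to sharpen these bounds to equalities.

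First, by polarization the energy identity
$\cE_X(f \circ \Phi)=\cE_{Y\times\R}(f)$ passes to the pointwise identity
$|\nabla(f\circ\Phi)|_X = |\nabla f|_{Y\times\R}\circ \Phi$ holding $\fm$-a.e.\ for every $f \in W^{1,2}(Y\times\R)$. Now pick any bounded $1$-Lipschitz function $f:Y \times \R \lra \R$ of bounded support; clearly $f \in W^{1,2}(Y \times \R)$ with $|\nabla f|_{Y\times\R} \le 1$ almost everywhere, so $|\nabla(f\circ\Phi)|_X \le 1$ $\fm$-a.e. Since $(X,d,\fm)$ satisfies $\RCD(1,\infty)$, the Sobolev-to-Lipschitz property gives a $1$-Lipschitz representative of $f \circ \Phi$ in its $\fm$-equivalence class. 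Because $\Phi$ is continuous and $\supp\fm = X$, this representative must coincide with $f\circ\Phi$ everywhere, hence $f \circ \Phi$ itself is $1$-Lipschitz on $X$. Consequently, for all $x_1,x_2 \in X$,
\[
|f(\Phi(x_1)) - f(\Phi(x_2))| \le d(x_1,x_2).
\]
Taking the supremum over all such $f$ and invoking the standard fact that, on a separable metric space, bounded $1$-Lipschitz functions of bounded support recover the distance via $\hat{d}(p,q) = \sup\{f(p)-f(q)\}$, we obtain
\[
\hat{d}\big(\Phi(x_1),\Phi(x_2)\big) \le d(x_1,x_2), \qquad x_1,x_2 \in X.
\]

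For the reverse inequality we run the same argument through $\Psi = \Phi^{-1}$. Given any bounded $1$-Lipschitz $g:X \lra \R$ with bounded support (which lies in $W^{1,2}(X)$ since $\fm(X)<\infty$), set $h := g \circ \Psi$; then $g = h \circ \Phi$, so the gradient identity reads $|\nabla h|_{Y\times\R} \circ \Phi = |\nabla g|_X \le 1$ $\fm$-a.e., i.e.\ $|\nabla h|_{Y\times\R} \le 1$ almost everywhere on $Y\times\R$. Applying the Sobolev-to-Lipschitz property on the $\RCD(1,\infty)$-space $Y\times\R$, together with the continuity of $\Psi$, yields that $h = g\circ\Psi$ is $1$-Lipschitz. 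Taking the supremum over $g$ produces $d(\Psi(p_1),\Psi(p_2)) \le \hat{d}(p_1,p_2)$, which together with the first inequality gives the claim.

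The main technical point to be careful about is the passage from an $\fm$-a.e.\ Lipschitz bound to a pointwise one: this relies on the continuity of $\Phi$ and $\Psi$ provided by Lemma~\ref{lm:biLip} and on the fact that $\fm$ and $\fm_Y \times \e^{-t^2/2}dt$ have full support. Everything else is a direct combination of the energy identity, Sobolev-to-Lipschitz, and the duality formula for the metric in terms of $1$-Lipschitz functions.
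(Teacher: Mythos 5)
Your proposal is correct, and it is in substance the same argument the paper uses. The paper's proof is a one-liner: it invokes the energy identity (Proposition~\ref{pr:E=E}) together with \cite[Proposition~4.20]{Gi-split}, which is precisely the abstract statement that a measure-preserving bi-Lipschitz homeomorphism between two metric measure spaces satisfying the Sobolev-to-Lipschitz property is an isometry as soon as it preserves the Cheeger energy. What you have done is re-derive that proposition in the case at hand: passing from the energy identity to the pointwise identity of minimal weak upper gradients, then using $|\nabla f|\le 1$ for $1$-Lipschitz $f$ together with the Sobolev-to-Lipschitz property and the duality formula for $d$ via $1$-Lipschitz functions, in both directions. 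The ingredients match exactly (energy identity, $\RCD$ structure and hence Sobolev-to-Lipschitz on both sides via Lemma~\ref{lm:Y-RCD}, bi-Lipschitz homeomorphism from Lemma~\ref{lm:biLip}, full support of the measures).

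One small technical point worth spelling out: the step ``by polarization the energy identity passes to the pointwise identity $|\nabla(f\circ\Phi)|_X = |\nabla f|_{Y\times\R}\circ\Phi$'' needs a little more than polarization alone. Polarization upgrades the equality $\cE_X(f\circ\Phi)=\cE_{Y\times\R}(f)$ of quadratic forms to an equality of the associated bilinear forms as \emph{integrals}, not pointwise. To get the pointwise identity of the carr\'e du champ one additionally invokes strong locality (equivalently, the Leibniz rule): testing the bilinear form identity against bounded Lipschitz multipliers yields $\int\chi\,\langle\nabla(f\circ\Phi),\nabla(g\circ\Phi)\rangle_X\,d\fm = \int\chi\,(\langle\nabla f,\nabla g\rangle_{Y\times\R}\circ\Phi)\,d\fm$ for all such $\chi$, and hence the pointwise equality. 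This is standard and is exactly what the cited \cite[Proposition~4.20]{Gi-split} does, but it should be named, since without locality the conclusion would not follow.
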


\begin{proof}
This is a consequence of the energy identity in Proposition~\ref{pr:E=E}
and \cite[Proposition~4.20]{Gi-split}.
Recall that the Sobolev-to-Lipschitz property, 
which is required in the cited proposition, 
holds on $\RCD (K,\infty)$-spaces 
as we mentioned in \S \ref{ssc:RCD}.
$\qedd$
\end{proof}

\begin{remark}\label{rm:affine}
The discussions in Sections~\ref{sc:affine}--\ref{sc:isom}
could be compared with the study of spaces admitting nonconstant affine functions.
The existence of a nonconstant affine function is a strong constraint
and forces the space to possess some splitting phenomenon.
See \cite{In,Ma,AB,HL} for related results concerning affine functions
on Riemannian manifolds or metric spaces,
and \cite{Oh-tg,Ly,BMS} for further studies on affine maps between (or into) metric spaces.
\end{remark}

\section{Final step and some remarks}\label{sc:final}

We finish the proof of Theorem~\ref{th:main} by iteration.
The case of $k=1$ was shown by the previous step.
If $k \ge 2$, then the space $(Y,d_Y,\fm_Y)$ has $\lambda_1=1$
and splits off the $1$-dimensional Gaussian space.
We iterate this procedure and complete the proof.
$\qedd$
\smallskip

We close the article with several remarks.

\begin{remark}\label{rm:final}
(a)
It is somewhat implicit in our discussion that the sharp spectral gap prevents
spaces ``with boundary'' such as $Y \times [0,\infty)$ showing up
(while $Y$ of $Y \times \R$ can have a boundary).
Indeed, on $Y \times [0,\infty)$, the function $u(y,t)=t$ is not an eigenfunction
since its measure-valued Laplacian has singularity on $Y \times \{0\}$.

(b)
It is well-known that a rigidity result for a compact family of spaces (in a certain topology)
can be used to show the corresponding \emph{almost rigidity}.
See \cite{Gi-split} for the case of almost splitting theorem.
The compactness, however, fails for the class of $\RCD(K,\infty)$-spaces even when $K>0$.
This is another difficulty due to the lack of the doubling condition.
We know (at least) two kinds of examples of sequences of $\RCD(1,\infty)$-spaces
having no convergent subsequence.
Firstly, the sequence of Gaussian spaces
\[ (X_n,d_n,\fm_n) := (\R^n,|\cdot|,\e^{-|x|^2/2} dx^1 dx^2 \cdots dx^n),
 \quad n \in \N, \]
consists of $\RCD(1,\infty)$-spaces and has no convergent subsequence
in the sense of the \emph{measured Gromov--Hausdorff convergence}
nor of the \emph{measured Gromov convergence}
(see \cite[Corollary~7.42]{Shi} and \cite{GMS} for details).
Secondly, the sequence
\[ (Z_k,d_k,\fm_k) := (\R^2,|\cdot|,\e^{-(kx^2+y^2)/2} dxdy),
 \quad k \in \N, \]
also consists of $\RCD(1,\infty)$-spaces and has no convergent subsequence
in the measured Gromov--Hausdorff topology.
This sequence, however, converges to $(\R,|\cdot|,\e^{-y^2/2}dy)$
in the weaker notion of the measured Gromov topology.
We remark that, in either case, the sharp spectral gap is attained
($\lambda_1(X_n)=\lambda_1(Z_k)=1$ for all $n,k$).

(c)
The Lichnerowicz inequality $\lambda_1 \ge KN/(N-1)$ under the bound
$\Ric_N \ge K>0$ holds true also for the ``negative effective dimension'' $N<0$,
see \cite{KM,Oh-neg}.
It would be worthwhile to consider the rigidity problem on this widely open situation.

(d)
Another possible generalization is the case of Finsler manifolds
(or more generally $\CD(K,\infty)$-spaces),
where the spectral gap and a Cheeger--Gromoll type splitting theorem are known
(\cite{Oh-int,Oh-split}).
We refer to \cite[Theorem~8.1]{Ke-obata} for the case of the Lichnerowicz inequality ($N>1$).

(e) 
In \cite{AM:gi} the authors prove a sharp Gaussian isoperimetric inequality for $\RCD(K,\infty)$-spaces with $K>0$, which generalizes
the L\'evy isoperimetric inequality. We expect that equality in this result should yield 
the same rigidity statement as in this paper, similar to the finite dimensional situation of the L\'evy-Gromov isoperimetric inequality \cite{CM}.
\end{remark}

{\small

}


\begin{thebibliography}{AGMR}

\bibitem[AB]{AB}
S.~B.~Alexander and R.~L.~Bishop,
A cone splitting theorem for Alexandrov spaces.
Pacific J.\ Math.\ {\bf 218} (2005), 1--15.

\bibitem[AG]{AG}
L.~Ambrosio, N.~Gigli
A user's guide to optimal transport.
Modelling and Optimisation of Flows on Networks (2013).

\bibitem[AGMR]{AGMR}
L.~Ambrosio, N.~Gigli, A.~Mondino and T.~Rajala,
Riemannian Ricci curvature lower bounds in metric measure spaces with $\sigma$-finite measure.
Trans.\ Amer.\ Math.\ Soc.\ {\bf 367} (2015), 4661--4701.

\bibitem[AGS1]{AGS-book}
L.~Ambrosio, N.~Gigli and G.~Savar\'e,
Gradient flows in metric spaces and in the space of probability measures.
Birkh\"auser Verlag, Basel, 2005.

\bibitem[AGS2]{AGS-hf}
L.~Ambrosio, N.~Gigli and G.~Savar\'e,
Calculus and heat flow in metric measure spaces and applications to spaces with Ricci bounds from below.
Invent.\ Math.\ {\bf 195} (2014), 289--391.

\bibitem[AGS3]{AGS-rcd}
L.~Ambrosio, N.~Gigli and G.~Savar\'e,
Metric measure spaces with Riemannian Ricci curvature bounded from below.
Duke Math.\ J.\ \textbf{163} (2014), 1405--1490.

\bibitem[AGS4]{AGS-be}
L.~Ambrosio, N.~Gigli and G.~Savar\'e,
Bakry--\'Emery curvature-dimension condition and Riemannian Ricci curvature bounds.
Ann.\ Probab.\ {\bf 43} (2015), 339--404.

\bibitem[AM]{AM:gi}
L.~Ambrosio and A.~Mondino,
Gaussian-type isoperimetric inequalities in $\RCD(K,\infty)$ probability spaces for positive $K$. 
Atti Accad. Naz. Lincei Rend. Lincei Mat. Appl. 27 (2016), no. 4, 497--514.

\bibitem[AMS1]{ams}
L.~Ambrosio, A.~Mondino and G.~Savar\'e,
On the Bakry--\'Emery condition, the gradient estimates and the local-to-global property of
$\RCD^*(K,N)$ metric measure spaces.
J.\ Geom.\ Anal.\ {\bf 26} (2016), 24--56.

\bibitem[AMS2]{AMS2}
L.~Ambrosio, A.~Mondino and G.~Savar\'e,
Nonlinear diffusion equations and curvature conditions in metric measure spaces.
Preprint (2016). Available at {\sf arXiv:1509.07273}

\bibitem[AT1]{AT}
L.~Ambrosio and D.~Trevisan,
Well-posedness of Lagrangian flows and continuity equations in metric measure spaces.
Anal.\ PDE {\bf 7} (2014), 1179--1234.

\bibitem[AT2]{AT2}
L.~Ambrosio and D.~Trevisan,
Lecture notes on the DiPerna--Lions theory in abstract measure spaces.
Preprint (2015). Available at {\sf arXiv:1505.05292}

\bibitem[Ba]{Ba1}
D.~Bakry,
Transformations de Riesz pour les semi-groupes sym\'etriques.~II.
\'Etude sous la condition $\Gamma_2 \ge 0$. (French)
S\'eminaire de probabilit\'es, {\bf XIX}, 1983/84, 145--174, 
Lecture Notes in Math., {\bf 1123}, Springer, Berlin, 1985.


\bibitem[BE]{BE}
D.~Bakry and M.~\'Emery, Diffusions hypercontractives. (French)
S\'eminaire de probabilit\'es, XIX, 1983/84, 177--206,
Lecture Notes in Math., {\bf 1123}, Springer, Berlin, 1985.


\bibitem[BMS]{BMS}
H.~Bennett, C.~Mooney and R.~Spatzier,
Affine maps between CAT$(0)$ spaces.
Geom.\ Dedicata {\bf 180} (2016), 1--16.

\bibitem[BH]{BH}
N.~Bouleau and F.~Hirsch, Dirichlet forms and analysis on Wiener space.
Walter de Gruyter \& Co., Berlin, 1991.

\bibitem[CMi]{CMi}
F.~Cavalletti and E.~Milman,
The globalization theorem for the curvature dimension condition.
Preprint (2016). Available at {\sf arXiv:1612.07623}

\bibitem[CMo]{CM}
F.~Cavalletti and A.~Mondino,
Sharp and rigid isoperimetric inequalities in metric-measure spaces with lower Ricci curvature bounds.
Invent.\ Math.\ (to appear). Available at {\sf arXiv:1502.06465}

\bibitem[Ch]{Ch}
J.~Cheeger,
Differentiability of Lipschitz functions on metric measure spaces.
Geom.\ Funct.\ Anal.\ {\bf 9} (1999), 428--517.

\bibitem[CG]{CG}
J.~Cheeger and D.~Gromoll,
The splitting theorem for manifolds of nonnegative Ricci curvature.
J.\ Differential Geometry {\bf 6} (1971/72), 119--128.

\bibitem[CZ]{CZ}
X.~Cheng and D.~Zhou,
Eigenvalues of the drifted Laplacian on complete metric measure spaces.
Commun.\ Contemp.\ Math.\ {\bf 19} (2017), 1650001, 17 pp.

\bibitem[DL]{DL}
R.~J.~DiPerna and P.-L.~Lions,
Ordinary differential equations, transport theory and Sobolev spaces.
Invent.\ Math.\ {\bf 98} (1989), 511--547.

\bibitem[EKS]{EKS}
M~Erbar, K.~Kuwada and K.-T.~Sturm,
On the equivalence of the entropic curvature-dimension condition
and Bochner's inequality on metric measure spaces.
Invent.\ Math.\ {\bf 201} (2015), 993--1071.

\bibitem[FOT]{FOT}
M.~Fukushima, Y.~Oshima and M.~Takeda,
Dirichlet forms and symmetric Markov processes.
Second revised and extended edition.
de Gruyter Studies in Mathematics, {\bf 19}.
Walter de Gruyter \& Co., Berlin, 2011.

\bibitem[Gi1]{Gi-Ondiff}
N.~Gigli, On the differential structure of metric measure spaces and applications. 
Mem.\ Amer.\ Math.\ Soc.\ {\bf 236} (2015).

\bibitem[Gi2]{Gi-split}
N.~Gigli, The splitting theorem in non-smooth context.
Preprint (2013). Available at {\sf arXiv:1302.5555}

\bibitem[Gi3]{Gi-ov}
N.~Gigli,
An overview of the proof of the splitting theorem in spaces with non-negative Ricci curvature.
Anal.\ Geom.\ Metr.\ Spaces {\bf 2} (2014), 169--213.

\bibitem[Gi4]{Gi-dg}
N.~Gigli,
Nonsmooth differential geometry -- An approach tailored for spaces with Ricci curvature bounded from below.
Mem.\ Amer.\ Math.\ Soc.\ (to appear). Available at {\sf arXiv:1407.0809}

\bibitem[GKO]{GKO}
N.~Gigli, K.~Kuwada and S.~Ohta, Heat flow on Alexandrov spaces.
Comm.\ Pure Appl.\ Math.\ {\bf 66} (2013), 307--331.

\bibitem[GMS]{GMS}
N.~Gigli, A.~Mondino and G.~Savar\'e,
Convergence of pointed non-compact metric measure spaces and stability of Ricci curvature bounds and heat flows.
Proc.\ Lond.\ Math.\ Soc.\ (3) {\bf 111} (2015), 1071--1129.

\bibitem[GT]{GT}
N.~Gigli and L.~Tamanini,
Second order differentiation formula on compact $\RCD^*(K,N)$ spaces 
Preprint (2017), Available at {\sf arXiv:1701.03932}.

\bibitem[H]{H}
BX.~Han,
Ricci Tensor on $\RCD^*(K,N)$ Spaces
J.\ Geom.\ Anal. (2017). doi:10.1007/s12220-017-9863-7

\bibitem[GH]{GH}
N.~Gigli and BX.~Han
Sobolev Spaces on Warped Products
Preprint (2015), Available at {\sf arXiv:1512.03177}.

\bibitem[HN]{HN}
H.-J.~Hein and A.~Naber,
New logarithmic Sobolev inequalities and an $\epsilon$-regularity theorem for the Ricci flow.
Comm.\ Pure Appl.\ Math.\ {\bf 67} (2014), 1543--1561.

\bibitem[HL]{HL}
P.~Hitzelberger and A.~Lytchak,
Spaces with many affine functions.
Proc.\ Amer.\ Math.\ Soc.\ {\bf 135} (2007), 2263--2271.

\bibitem[In]{In}
N.~Innami, Splitting theorems of Riemannian manifolds.
Compositio Math.\ {\bf 47} (1982), 237--247.


\bibitem[Ke]{Ke-obata}
C.~Ketterer, Obata's rigidity theorem for metric measure spaces.
Anal.\ Geom.\ Metr.\ Spaces {\bf 3} (2015), 278--295.

\bibitem[KM]{KM}
A.~V.~Kolesnikov and E.~Milman,
Poincar\'e and Brunn--Minkowski inequalities on weighted Riemannian manifolds with boundary.
Preprint (2013). Available at {\sf arXiv:1310.2526}


\bibitem[Ku]{Ku}
K.~Kuwada, A probabilistic approach to the maximal diameter theorem.
Math.\ Nachr.\ {\bf 286} (2013), 374--378.


\bibitem[LV]{LV}
J.~Lott and C.~Villani,
Ricci curvature for metric-measure spaces via optimal transport.
Ann.\ of Math.\ {\bf 169} (2009), 903--991.

\bibitem[LV]{LV-de}
J.~Lott and C.~Villani,
Weak curvature conditions and functional inequalities. 
J. Funct. Anal. 245 (2007), no. 1, 311--333.

\bibitem[Ly]{Ly}
A.~Lytchak, Affine images of Riemannian manifolds.
Math.\ Z.\ {\bf 270} (2012), 809--817.

\bibitem[Mai]{Mai}
C.~H.~Mai,
On Riemannian manifolds with positive weighted Ricci curvature of negative effective dimension.
Preprint (2017). Available at {\sf arXiv:1704.06091}

\bibitem[Ma]{Ma}
Y.~Mashiko, A splitting theorem for Alexandrov spaces.
Pacific J.\ Math.\ {\bf 204} (2002), 445--458.

\bibitem[Mc]{Mc}
R.~J.~McCann,
Polar factorization of maps on Riemannian manifolds.
Geom.\ Funct.\ Anal.\ {\bf 11} (2001), 589--608.

\bibitem[Ob]{Ob}
M.~Obata,
Certain conditions for a Riemannian manifold to be isometric with a sphere.
J.\ Math.\ Soc.\ Japan {\bf 14} (1962), 333--340.

\bibitem[Oh1]{Oh-tg}
S.~Ohta, Totally geodesic maps into metric spaces.
Math.\ Z.\ {\bf 244} (2003), 47--65.

\bibitem[Oh2]{Oh-int}
S.~Ohta, Finsler interpolation inequalities.
Calc.\ Var.\ Partial Differential Equations {\bf 36} (2009), 211--249.

\bibitem[Oh3]{Oh-split}
S.~Ohta,
Splitting theorems for Finsler manifolds of nonnegative Ricci curvature.
J.\ Reine Angew.\ Math.\ {\bf 700} (2015), 155--174.

\bibitem[Oh4]{Oh-neg}
S.~Ohta,
$(K,N)$-convexity and the curvature-dimension condition for negative $N$.
J.\ Geom.\ Anal.\ {\bf 26} (2016), 2067--2096.

\bibitem[Oh5]{Oh-needle}
S.~Ohta, Needle decompositions and isoperimetric inequalities in Finsler geometry.
J.\ Math.\ Soc.\ Japan (to appear). Available at {\sf arXiv:1506.05876}

\bibitem[Oh6]{Oh-nlga}
S.~Ohta, Nonlinear geometric analysis on Finsler manifolds.
Eur.\ J.\ Math.\ (to appear). Available at {\sf arXiv:1704.01257}

\bibitem[OS]{OS-nc}
S.~Ohta and K.-T.~Sturm, Non-contraction of heat flow on Minkowski spaces.
Arch.\ Ration.\ Mech.\ Anal.\ {\bf 204} (2012), 917--944.

\bibitem[Ra]{Ra}
T. Rajala, Interpolated measures with bounded density in metric spaces
satisfying the curvature-dimension conditions of Sturm.
J.\ Funct.\ Anal.\ {\bf 263} (2012), 896--924.

\bibitem[vRS]{vRS}
M.-K.~von Renesse and K.-T.~Sturm,
Transport inequalities, gradient estimates, entropy and Ricci curvature.
Comm.\ Pure Appl.\ Math.\ {\bf 58} (2005), 923--940.

\bibitem[Sa]{Sa}
G.~Savar\'e, Self-improvement of the Bakry--\'Emery condition
and Wasserstein contraction of the heat flow in $\RCD(K,\infty)$ metric measure spaces.
Discrete Contin.\ Dyn.\ Syst.\ {\bf 34} (2014), 1641--1661.

\bibitem[Sha]{Sha}
N.~Shanmugalingam,
Newtonian spaces: an extension of {S}obolev spaces to metric measure spaces
Rev. Mat. Iberoamericana, \ {\bf 16} (2000), 243--279.

\bibitem[Sh]{Shi}
T.~Shioya,
Metric measure geometry. 
Gromov's theory of convergence and concentration of metrics and measures.
EMS Publishing House, Z\"urich, 2016.


\bibitem[St1]{StI}
K.-T.~Sturm, On the geometry of metric measure spaces.~I.
Acta Math.\ {\bf 196} (2006), 65--131.

\bibitem[St2]{StII}
K.-T.~Sturm, On the geometry of metric measure spaces.~II.
Acta Math.\ {\bf 196} (2006), 133--177.

\bibitem[St3]{St-gf}
K.-T.~Sturm,
Gradient flows for semiconvex functions on metric measure spaces -- existence, uniqueness and Lipschitz continuity.
Preprint (2014). Available at {\sf arXiv:1410.3966}

\bibitem[Vi]{Vi}
C.~Villani, Optimal transport, old and new.
Springer-Verlag, Berlin, 2009.

\end{thebibliography}
\end{document}